\newcommand{\df}{\buildrel\rm{def}\over=}
\newcommand{\pd}{\partial}
\def\supp{\operatorname{supp}}
\newcommand{\al}{\alpha}
\newcommand{\eps}{\varepsilon}
\newcommand{\la}{\lambda}
\newcommand{\vf}{\varphi}
\newcommand{\om}{\omega}
\newcommand{\ep}{\epsilon}
\newcommand{\cD}{\mathcal D}
\newcommand{\cE}{\mathcal E}
\newcommand{\cR}{\mathcal R}
\newcommand{\cS}{\mathcal S}
\newcommand{\bI}{\mathbb I}
\newcommand{\bR}{\mathbb R}
\newcommand{\bV}{\mathbb V}
\newcommand{\bZ}{\mathbb Z}
\newtheorem{theorem}{Theorem}[section]
\newtheorem{lemma}[theorem]{Lemma}
\theoremstyle{definition}
\newtheorem{remark}[theorem]{Remark}
\newtheorem{defin}[theorem]{Definition}
\numberwithin{equation}{section}
\newcounter{vremennyj}
\begin{document}

\title[Bi-parameter embedding without capacity]%
{
{Bi-parameter embedding and measures with restricted energy conditions}
}
\author[N.~Arcozzi]{Nicola Arcozzi}
\address{Universit\`{a} di Bologna, Department of Mathematics, Piazza di Porta S. Donato, 40126 Bologna (BO)}
\email{nicola.arcozzi@unibo.it}
\thanks{Theorem 3.1 was obtained in the frameworks of the project 17-11-01064 by the Russian Science Foundation}
\thanks{NA is partially supported by the grants INDAM-GNAMPA 2017 "Operatori e disuguaglianze integrali in spazi con simmetrie" and PRIN 2018 "Variet\`{a} reali e complesse: geometria, topologia e analisi armonica"}
\author[I.~ Holmes]{Irina Holmes}
\thanks{IH is partially supported by the NSF an NSF Postdoc under Award No.1606270}
\address{Department of Mathematics, Michigan Sate University, East Lansing, MI. 48823}
\author[P. Mozolyako]{Pavel Mozolyako}
\thanks{PM is supported by the Russian Science Foundation grant 17-11-01064}
\address{Universit\`{a} di Bologna, Department of Mathematics, Piazza di Porta S. Donato, 40126 Bologna (BO)}
\email{pavel.mozolyako@unibo.it}
\author[A.~Volberg]{Alexander Volberg}
\thanks{AV is partially supported by the NSF grant DMS-160065}
\address{Department of Mathematics, Michigan Sate University, East Lansing, MI. 48823}
\email{volberg@math.msu.edu \textrm{(A.\ Volberg)}}
\makeatletter
\@namedef{subjclassname@2010}{
  \textup{2010} Mathematics Subject Classification}
\makeatother
\subjclass[2010]{42B20, 42B35, 47A30}
%
%
\keywords{Carleson embedding on dyadic tree, bi-parameter Carleson embedding, Bellman function, capacity on dyadic tree and bi-tree}

\begin{abstract}
Nicola Arcozzi, Pavel Mozolyako, Karl-Mikael Perfekt, and Giulia Sarfatti recently gave  the proof of a bi-parameter Carleson embedding theorem. Their proof uses heavily the notion of capacity on the bi-tree. In this note we give another proof of a bi-parameter Carleson embedding theorem that avoids the use of bi-tree capacity. Unlike the proof on a simple tree in a previous paper of the authors, which used the Bellman function technique, the proof here is based on some rather subtle comparisons of energies of measures on the bi-tree. 
\end{abstract}
\maketitle

\section{Introduction and Notations}
\label{intro}

Let $T$ denote a finite dyadic tree (of depth $N$). By identifying the root of $T$ with $I_0 = [0, 1]$ and each subsequent node in $T$ with the corresponding dyadic subinterval of $I_0$, we can think of its boundary $\pd T$ as simply $\cD_N$, i.e. the dyadic subintervals of $I_0$ of size $2^{-N}$. 

Consider now $T^2 = T \times T$, a bi-tree. We identify the root of $T^2$ with $Q_0=[0,1]^2$, and then each node $\al \in T^2$ of the bi-tree is identified with a corresponding dyadic rectangle $R_{\al} \subset Q_0$ in the obvious way. If $\al$ and $\beta$ are nodes of the bi-tree $T^2$, we say that $\al \leq \beta$ if and only if their corresponding dyadic rectangles satisfy $R_{\al} \subset R_{\beta}$. 

The boundary $\pd T^2 = (\pd T)^2$ will consist of $\cD_N\times \cD_N$, the dyadic sub-squares of $Q_0$ of side-length $2^{-N}$. 
We will usually denote these boundary nodes $\pd T^2$ by the letter $\omega$. The small  squares of size $2^{-n}\times 2^{-N}$ making up the boundary will be denoted $R_\om$. In fact, as the nodes of the bi-tree $T^2$ and dyadic rectangles are in one-to-one correspondence, we will feel free in what follows to sometimes replace the symbol $R_\om$ by  just $\om$,  and $R_\al$ by just $\al$. This should not lead to a confusion, and sometimes it is nice to distinguish between the two objects.

If $E$ is a subset of $\pd T^2$ (or $\pd T$), then we define
$U_E$ to be the union of corresponding squares (intervals for $T$):
		$$U_E := \bigcup_{\omega\in E}R_{\omega}, \:\: \forall E \subset \pd T^2,$$
and $\cR_E$ to be the collection of all dyadic rectangles inside $U_E$ (this is a collection of dyadic intervals if we mean $T$ instead of $T^2$):
	$$\cR_E := \{R  : R \subset U_E\}.$$
	

We consider measures $\mu$ on $\pd T^2$ (or on $\pd T$) that have \textit{constant density} on each small square $R_\omega \in \pd T^2$ (or small interval of $\pd T$). Then if $R\in \cR_E$, obviously
	$$ \mu(R) = \sum_{\om \in E, \,\om \subset R}\mu(R_\om).$$
We can also interpret $\mu$ in terms of the nodes of the bi-tree. For this, recall the Hardy operator 
$\bI: \ell^2(T^2) \rightarrow \ell(T^2)$
on a bi-tree: 
for any $\vf: T^2\to \bR$ let
	$$ \bI \vf (\al)= \sum_{\beta\ge \al} \vf(\beta).$$
Correspondingly it is defined on $T$, but then it is called $I$. Its dual $\bI^*$ is given by the formula
	$$ \bI^* \psi (\al)= \sum_{\beta\le \al} \psi(\beta).$$
Then, of course,
	$$ \mu(R_{\al}) = (\bI^*\mu) (\al).$$

\begin{remark}
The equality above needs perhaps a small clarification, specifically in the last step below:
	$$(\bI^*\mu) (\al) = \sum_{\beta \leq \al} \mu(\beta) = \sum_{\omega\in \pd T^2; \: \omega \leq \alpha} \mu(\omega).$$
The vertices (nodes) of the bi-tree and the dyadic rectangles are the same things (the same  can be said about the nodes of the tree $T$ and the dyadic intervals).
However, notice that given $\al\in T^2\setminus \pd T^2$ (or $\al\in T\setminus \pd T$) we distinguish between $\mu(\al)$ and $\mu(R_\al)$. In fact, $\mu(\al) = 0$ for all $\al\in T^2\setminus\pd T^2$ (or $\al\in T\setminus \pd T$ if we consider just a tree and not a bi-tree). This is because we assume from the start that the measure lies on the boundary of the tree. On the other hand, 
$$
\mu(R_\al)= \sum_{\om\in \pd T^2,\, \om \subset R_\al} \mu(R_\om) = \sum_{\om \in \pd T^2,\, \om \le\al} \mu(\om)\,.
$$
At the same time, if $\om \in \pd T^2$ (or $\om \in \pd T$), then $\mu(\om)=\mu(R_\om)$.
\end{remark}

\begin{remark}
As we already mentioned, we assume from the start that the measure lies on the boundary of the tree. The results of this paper extend to the case when $\mu$ is given on the whole $T^2$, this is done on our subsequent article.
\end{remark}

\begin{defin}
\label{def:Cm}
We say that a measure $\mu$ on $\pd T^2$ (or $\pd T$) is a \textbf{$C$-Carleson measure} if
for any subset $E\subset \pd T^2$ we have
	$$ \sum_{R \in \cR_E} \mu(R)^2 \le C\mu(E).$$
Of course we can give the analogous definitions for a simple tree $T$.
\end{defin}

This is just the condition \eqref{bIstar} below, when it is tested on characteristic functions. Sometimes  it is called ``the dual testing condition'' in the literature.


\begin{defin}
\label{def:CmH}
We say that a measure $\mu$ on $\pd T^2$ (or $\pd T$) is a \textbf{hereditary Carleson measure} if 
there exists a constant $C$ such that $\mu|E$ is $C$-Carleson for any subset $E \subset \pd T^2$ (or $\pd T$).
Here $\mu|E$ denotes the restriction of $\mu$ to $E$:
	\begin{equation*}
	(\mu|E)(\om) := \left\{ \begin{array}{ll}
		\mu(\om), & \text{ if } \om\in E,\\
		0, & \text{ if } \om\not\in E.
		\end{array}\right.
	\end{equation*}
So, in terms of rectangles,
	\begin{equation}\label{E:rest}
	(\mu|E)(R_\al) = \mu(R_\al\cap U_E).
	\end{equation}
The hereditary Carleson condition can then be restated as:
	\begin{equation}\label{E:her2}
	\sum_{R\in\cR_F} \mu(R\cap U_E)^2 \leq C \mu(E\cap F), \:\: \forall E, F \subset \pd T^2.
	\end{equation}
\end{defin}

It is proved in \cite{AMPS} that to be a Carleson measure on $\pd T^2$ is the same as to be a capacitary measure.
Capacitary property is hereditary, and so any Carleson measure on $\pd T^2$ (or $\pd T$) is hereditary Carleson.
However, the main goal of this note is to avoid  the use of capacity, and to prove directly the following result.

\begin{theorem}
\label{main}
Let $\mu$ be a measure on $\pd T^2$.
Then the following are equivalent: 
	\begin{enumerate}
	\item $\mu$ is Carleson;
	\item $\mu$ is hereditary Carleson;
	\item $\mu$ is an embedding measure for the Hardy operator, in the sense that 
		\begin{equation} \label{bI}
		\sum_{\om\in \pd T^2} |\bI \vf(\om)|^2 \mu(\om) \le C_1 \|\vf\|^2_{\ell^2(T^2)};
		\end{equation}
	\item $\mu$ satisfies the {\bf second embedding}:
		\begin{equation} \label{bIstar}
		\sum_{\al\in T^2} |\bI^*( \psi\mu)(\alpha) |^2\le C_1 \sum_{\om \in \pd T^2} |\psi(\om)|^2 \mu (\om).
		\end{equation}
	\end{enumerate}
\end{theorem}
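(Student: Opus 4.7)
The plan is to close a cycle of implications. Three of the four directions are routine. First, \textbf{(2) $\Rightarrow$ (1)} follows by setting $E=F$ in \eqref{E:her2}, since then $R\in\cR_F$ forces $R\cap U_F=R$. Second, \textbf{(4) $\Rightarrow$ (2)} follows by substituting $\psi=\1_E$ in \eqref{bIstar}: the left side collapses to $\sum_\alpha(\mu|E)(R_\alpha)^2$, the full Carleson energy of $\mu|E$, while the right side is $C_1\mu(E)$, which is precisely the hereditary Carleson condition for $\mu|E$. Third, \textbf{(3)$\Leftrightarrow$(4)} is the standard adjoint duality, viewing \eqref{bI} as the $\ell^2(T^2)\to L^2(\mu)$ boundedness of $\bI$ and \eqref{bIstar} as the $L^2(\mu)\to\ell^2(T^2)$ boundedness of its formal adjoint. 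The real content of the theorem is therefore the remaining implication \textbf{(1)$\Rightarrow$(4)}, i.e.\ deriving the second embedding from the Carleson testing condition \emph{without} invoking the bi-tree capacity machinery of \cite{AMPS}.

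For this main step I would try a layer-cake decomposition of $\psi\ge 0$ with respect to the nested super-level sets $F_k:=\{\omega\in\pd T^2:\psi(\omega)\ge 2^k\}$. Up to universal constants one has $\psi\asymp\sum_k 2^k\1_{F_k}$ and $\int\psi^2\,d\mu\asymp\sum_k 4^k\mu(F_k)$. Inserting this into \eqref{bIstar} and expanding the square reduces the claim to
$$\sum_{k,l}2^{k+l}\sum_\alpha\mu(R_\alpha\cap U_{F_k})\,\mu(R_\alpha\cap U_{F_l})\;\le\;C\sum_k 4^k\mu(F_k).$$
Since $F_l\subset F_k$ for $k\le l$, the geometric structure of the double sum is compatible with the target cross-energy bound
$$\sum_\alpha\mu(R_\alpha\cap U_{F_k})\,\mu(R_\alpha\cap U_{F_l})\;\le\;C\,\mu(F_{\max(k,l)}),$$
because $\sum_{k\le l}2^k\asymp 2^l$ then produces exactly $C\sum_l 4^l\mu(F_l)$ after summing in $k$.

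The main obstacle is establishing this cross-energy bound with the \emph{smaller} of the two measures on the right, which is what makes the geometric summation close. Naive Cauchy--Schwarz followed by hereditary Carleson on each factor gives only $C\sqrt{\mu(F_k)\mu(F_l)}$, and the polarization identity on $F_k\cup F_l=F_k$ produces at best the \emph{larger} measure $C\mu(F_{\min(k,l)})$; neither bound survives multiplication by $2^{k+l}$. Squeezing out $\mu(F_{\max(k,l)})$ instead forces one to use the asymmetric form of \eqref{E:her2} with $(E,F)=(F_l,F_k)$, which does give $\sum_{R\in\cR_{F_k}}\mu(R\cap U_{F_l})^2\le C\mu(F_l)$, and then to combine it with a delicate treatment of those rectangles $R_\alpha\not\subset U_{F_k}$ that nonetheless carry mass of both $\mu|F_k$ and $\mu|F_l$. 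This coupling of two restricted energies is the ``subtle comparison of energies on the bi-tree'' promised in the abstract, and I expect its proof to require a bi-parameter stopping-time or iterative refinement argument that replaces the capacity-theoretic manipulations of \cite{AMPS}. This cross-energy inequality is the crux of the theorem and the principal obstacle I would anticipate.
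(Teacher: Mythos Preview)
Your handling of the soft implications is fine, and your layer-cake reduction of the hard direction to a cross-energy estimate is exactly the skeleton the paper uses. The gap is in the cross-energy bound you are aiming for. You write that the double sum would close if
\[
\sum_{\alpha}\mu(R_\alpha\cap U_{F_k})\,\mu(R_\alpha\cap U_{F_l})\le C\,\mu(F_{\max(k,l)}),
\]
but this is \emph{false} for general Carleson measures. Since the $F_k$ can be arbitrary nested sets (take $\psi=\1+\1_{F}$), choosing $F_k=\pd T^2$ and $F_l=F$ the inequality reads $\int\bV^\mu\,d(\mu|F)\le C\mu(F)$ for every $F$, which is equivalent to $\bV^\mu\le C$ on $\supp\mu$. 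Boundedness of the potential is strictly stronger than the Carleson (or REC, or embedding) condition; the paper treats $\bV^\mu\le 1$ on $\supp\mu$ as an \emph{extra} hypothesis in Lemma~\ref{main_lm} and then spends Theorem~\ref{murho} removing it, obtaining only the tail bound $\mu\{\bV^\mu\ge\lambda\}\lesssim|\mu|\lambda^{-7/3}$, not an $L^\infty$ bound. Relatedly, the partial estimate you quote, $\sum_{R\in\cR_{F_k}}\mu(R\cap U_{F_l})^2\le C\mu(F_l)$, is the hereditary Carleson/REC condition itself, so invoking \eqref{E:her2} while proving (1)$\Rightarrow$(4) is circular.

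The paper closes the gap differently. It inserts the intermediate REC condition and proves two hard implications separately: (i) Carleson $\Rightarrow$ REC (Theorem~\ref{CtoREC}, via an extremal argument and the energy-comparison Lemma~\ref{l:l2}/Lemma~\ref{cEcE}), and (ii) REC $\Rightarrow$ embedding (Theorem~\ref{localEthm}). Step (ii) uses the same layer-cake you propose but on level sets of $\bI\vf$, and the cross-energy bound it actually proves is the \emph{weaker}
\[
\int\bV^{\mu|E}\,d(\mu|F)\le C\,\mu(E)^{3/7}\mu(F)^{4/7}\qquad(F\subset E),
\]
(Theorem~\ref{impr}); any exponent $>\tfrac12$ on the smaller mass suffices to make the off-diagonal sum converge via H\"older. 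That $3/7$--$4/7$ bound is not obtained by stopping times but by the chain Lemma~\ref{main_lm} $\to$ Theorem~\ref{murho} $\to$ Theorem~\ref{impr}, all resting on the ``majorization with small energy'' Theorem~\ref{2D}, which constructs a competitor $\vf$ with $\bI\vf\ge\lambda$ on $\{\bV^\mu\ge\lambda\}$ and $\|\vf\|^2\lesssim\lambda^{-1}\cE[\mu]$. This construction (Sections~\ref{T1}--\ref{T2lemma}) is the real replacement for the capacity argument of \cite{AMPS}, and nothing in your outline supplies it.
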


There are some easy implications, like \textit{(2)} obviously implies \textit{(1)}. The fact that \textit{(3)} is equivalent to
\textit{(4)} is just duality: note that \textit{(3)} is the same as the boundedness of the operator $\bI : \ell^2(T^2) \rightarrow \ell(T^2;\: \mu)$, where the inner product in the latter is given by
	$$(\vf, \psi)_{\ell^2(T^2;\: \mu)} := \sum_{\al\in T^2} \vf(\al)\psi(\al)\mu(\al) 
		= \sum_{\om\in\pd T^2} \vf(\om) \psi(\om) \mu(\om).$$
Since
	$$(\bI\vf, \psi)_{\ell^2(T^2;\:\mu)} = \sum_{\al\in T^2} \vf(\al) \bI^*(\psi\mu)(\al)
		= (\vf, \bI^*(\psi\mu))_{\ell^2(T^2)},$$
the adjoint of $\bI : \ell^2(T^2) \rightarrow \ell(T^2;\: \mu)$ is then
	$\bI^*(\cdot\mu): \ell^2(T;\:\mu) \rightarrow \ell^2(T^2)$, 
and \textit{(4)} is exactly boundedness of this operator.

Also the claim that \textit{(4)} implies \textit{(1)} is easy: let some $E\subset \pd T^2$ and choose in \textit{(4)} the function
	$$
	\psi(\al) := \left\{\begin{array}{ll}
		1, & \text{ if } \al\in E\\
		0, & \text{ otherwise.}
	\end{array}\right.
	$$
Then $\bI^*(\psi\mu)(\al) = \mu(R_\al\cap E)$, and  \eqref{bIstar} becomes
	\begin{equation}
	\label{rec1}
	\sum_{\al\in T^2 }\mu(R_\al \cap U_E)^2 \le C_1 \mu(E).
	\end{equation}
Obviously the left hand side is greater than $\sum_{R\in\cR_E}\mu(R\cap U_E)^2 = \sum_{R\in\cR_E}\mu(R)^2$, and we then have the $C_1$-Carleson property. We briefly remark here that the relationship in \eqref{rec1} describes exactly the notion of restricted energy condition, which we will encounter shortly.

The implication \textit{(3)} $\Rightarrow$ \textit{(2)} now is also easy: if the measure $\mu$ satisfies \eqref{bI}, then obviously any measure smaller than $\mu$ also must satisfy \eqref{bI}. So, if $\mu$ satisfies \textit{(3)} then for every $E\subset \pd T^2$, the measure
$\mu|E$ also satisfies \textit{(3)} -- therefore also \textit{(4)}, which we showed implies \textit{(1)}. Then $\mu|E$ is $C_1$-Carleson for all $E\subset\pd T^2$, proving that $\mu$ is hereditary Carleson. Alternatively, one can take in \textit{(4)} the function $\psi(\al)$ which is $1$ when $\al\in E\cap F$ and $0$ elsewhere. As seen in \eqref{rec1}, this will give us
	$$\sum_{\al\in T^2}\mu(R_\al\cap U_{E\cap F})^2 \leq C_1 \mu(E\cap F),$$ 
which then easily implies the hereditary Carleson condition in \eqref{E:her2}.

The difficult implications are \textit{(1)} $\Rightarrow$ \textit{(2)} and \textit{(2)} $\Rightarrow$ \textit{(3)}.
To illustrate that \textit{(1)} $\Rightarrow$ \textit{(2)} is highly non-trivial, let us consider the  simple case of $T$ (much simpler than the bi-tree $T^2$ case). The Carleson property \textit{(1)} is the same as
\begin{equation}
\label{TCarl}
\forall J\in \cD,\,\, \sum_{I\in \cD(J)} \mu(I)^2 \le C\mu(J).
\end{equation}
Let us choose a dyadic interval $K$ and let $\nu=\mu|K$. If we believe that \eqref{TCarl} is hereditary (may be with another constant) then, in particular,
 $$
 \sum_{L\in \cD(I_0),\, K\subset L} \nu(L)^2 \le C' \nu(I_0),
 $$
 but clearly $\nu(L)= \mu(K), \nu(I_0)=\mu(K)$ and we obtain   that $g(K) \mu(K)^2 \le C' \mu(K)$. Here $g(K)=\log1/|K|$, that is the number of the dyadic generation of $K$. Thus, we get
 \begin{equation}
 \label{muK}
 \mu \in \eqref{TCarl} \Rightarrow \mu(K) \le \frac{C'}{g(K)} =\frac{C'}{\log1/|K|}.
 \end{equation}
 
 One can indeed  deduce \eqref{muK} from the Carleson property
 \eqref{TCarl} directly, but it requires some real work, see \cite{HPV}. Moreover, in \cite{HPV} we deduced the box capacitary condition
 $$
  \mu(K\times J) \le \frac{C'}{\log1/|K|\log1/|J|}.
 $$
 from the box condition on bi-tree:
 \begin{equation}
\label{bocCarl}
\forall R_0\in \cD\times \cD,\,\, \sum_{R\in \cD(R_0)} \mu(R)^2 \le C\mu(R_0).
\end{equation}


\subsection{Restricted Energy Condition} 

At this point we are in situation (A) of Figure \ref{fig:Summ} below, namely we are left with the difficult implications 
\textit{(1)} $\Rightarrow$ \textit{(2)} and \textit{(2)} $\Rightarrow$ \textit{(3)}.
We will prove these by appealing to a fourth concept, that of restricted energy condition, 
which we introduce next.

\begin{figure}[h!]
  \centering
  \begin{subfigure}[b]{0.45\linewidth}
    \includegraphics[width=\linewidth]{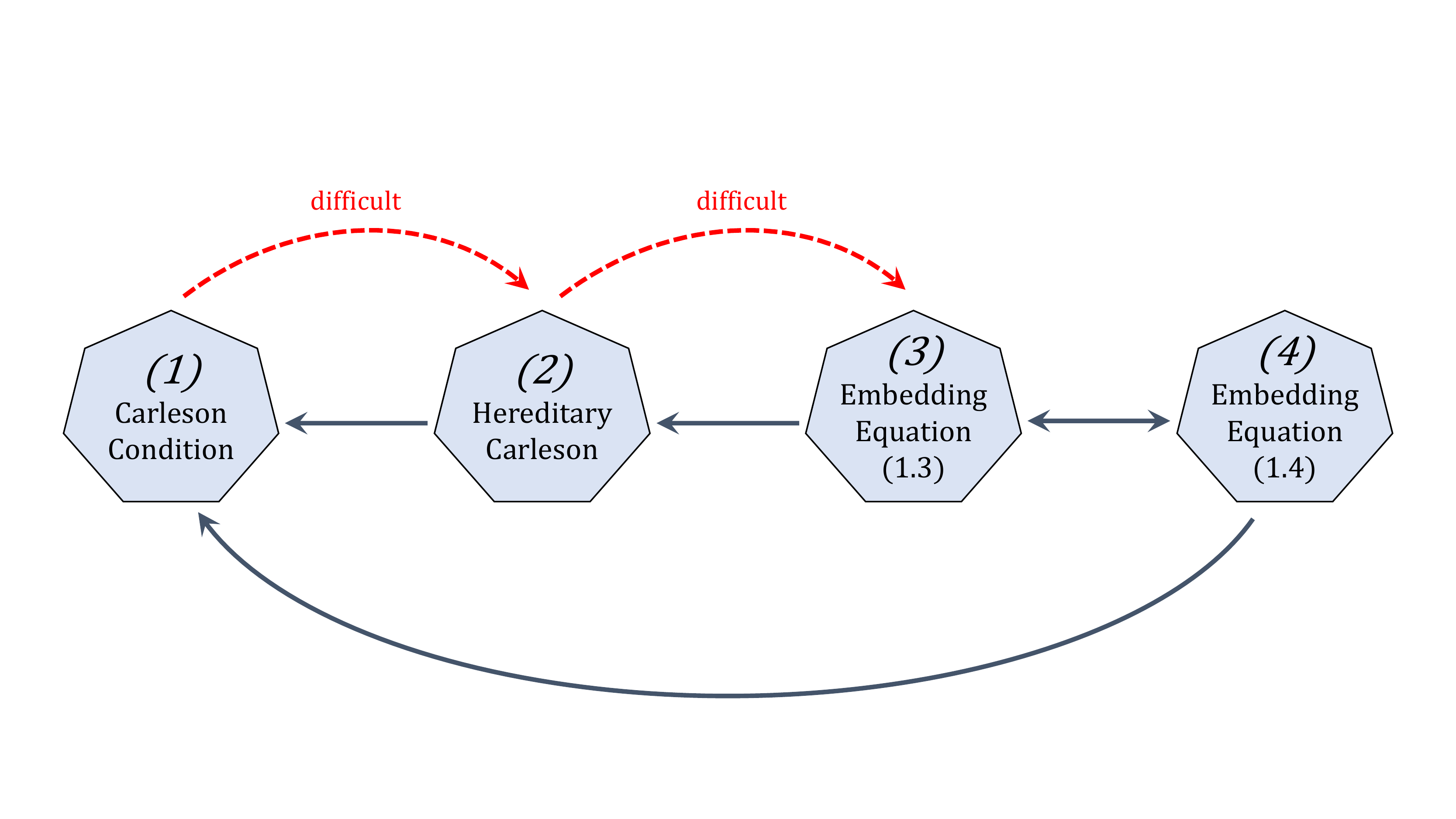}
      \caption{ }
  \end{subfigure}
  \begin{subfigure}[b]{0.45\linewidth}
    \includegraphics[width=\linewidth]{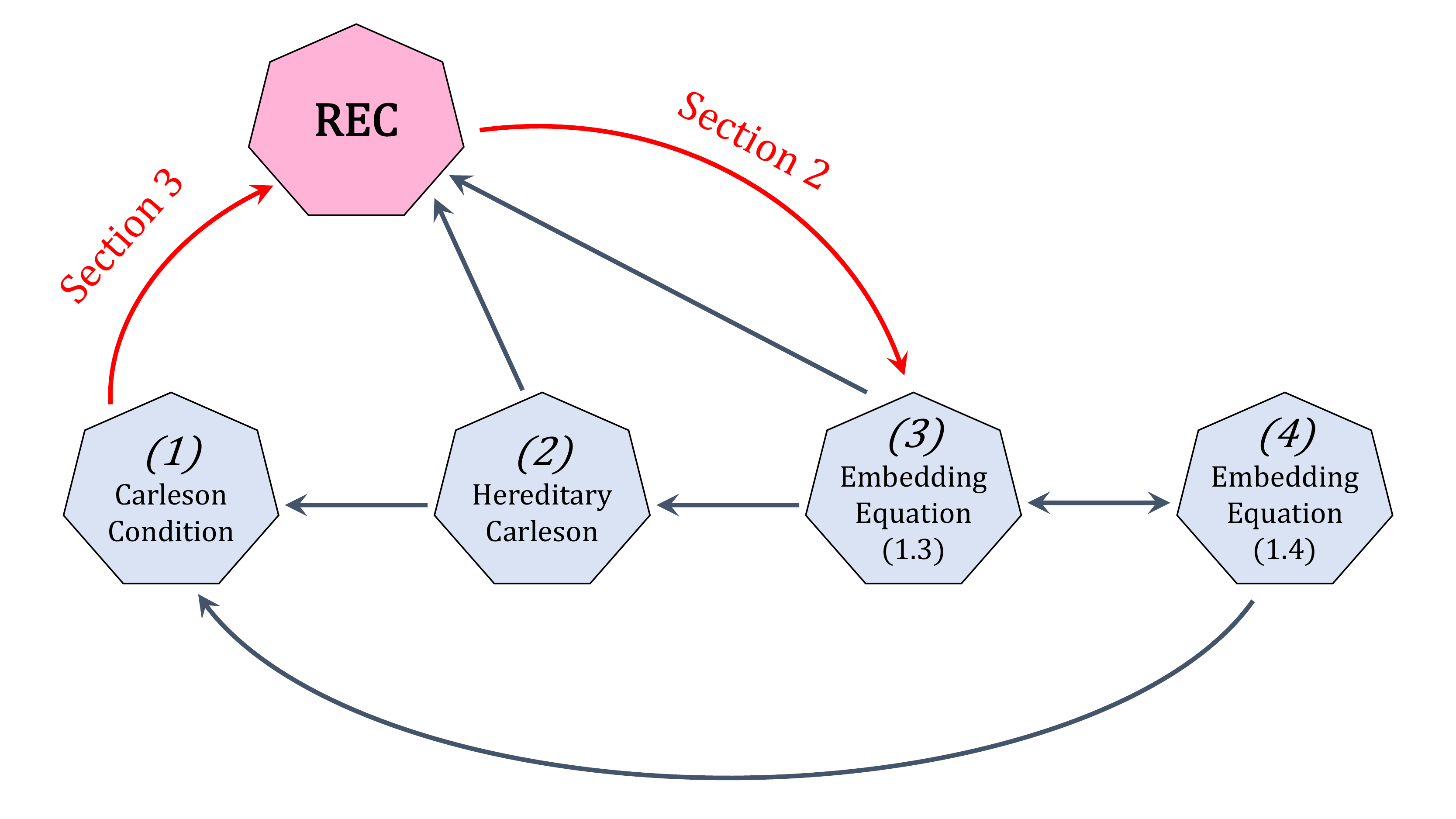}
    \caption{ }
  \end{subfigure}
  \caption{Structure of the Proof of Theorem \ref{main}. The dark blue arrows indicate implications which have been already obtained easily, and the red arrows indicate the difficult implications.}
  \label{fig:Summ}
\end{figure}

First, let us define the \textbf{potential} $\bV^\mu$ to be $\bI (\bI^*\mu)$. Then
	$$\bV^\mu(\al) = \sum_{\beta\geq\al} \mu(R_\beta), \:\: \forall \al\in\pd T^2.$$
Also, define the \textbf{energy}
	$$ \cE[\mu] \df\int  \bV^\mu\,d\mu= \sum_{\alpha\in T^2} \mu(R_\al)^2 = \|\bI^*\mu\|^2_{\ell^2(T^2)}.$$
Given a subset $E\subset \pd T^2$, we introduce
	$$ \cE_E[\mu] \df \sum_{R\in \cR_{E}} \mu(R)^2.$$
Of course the same definitions apply to  the simple tree $T$.

\begin{remark}
\label{cE}
Notice that $\cE[\mu|E]$ is considerably larger than $\cE_E[\mu]$:
	$$\cE[\mu|E] = \sum_{\al\in T^2} \mu(R_\al \cap U_E)^2 \gg \cE_E[\mu] = \sum_{\al:\: R_\al\subset U_E} \mu(R_\al)^2.$$ 
\end{remark}

\begin{defin}
\label{localEdf}
We say $\mu$ is a measure {\bf with restriction energy condition}, denoted $\mu\in REC$, provided that
	\begin{equation}
	\label{localE}
	\cE[\mu|E] \leq C \mu(E), \:\: \forall E\subset \pd T^2.
	\end{equation}
\end{defin}

Obviously, hereditary Carleson measures are REC -- this can be easily seen by taking $F=\pd T^2$ in \eqref{E:her2}.
Note that the Carleson condition may be written exactly as
	$$\text{Carleson condition: } \cE_E[\mu] \leq C \mu(E), \:\:\forall E\subset \pd T^2,$$
which is seemingly much weaker than restriction energy condition. In fact, these conditions are equivalent, 
as we show  below.

\begin{theorem}
\label{3equiv}
The Carleson condition, the restriction energy condition and the hereditary Carleson condition are all equivalent.
\end{theorem}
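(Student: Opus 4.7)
My plan is to close the triangle of equivalences with three essentially formal implications and one substantial one. The formal part consists of Hereditary Carleson $\Rightarrow$ REC, REC $\Rightarrow$ Carleson, and REC $\Rightarrow$ Hereditary Carleson; combined with the trivial Hereditary Carleson $\Rightarrow$ Carleson these suffice. The analytic content lies entirely in Carleson $\Rightarrow$ REC.

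For Hereditary Carleson $\Rightarrow$ REC, specialize \eqref{E:her2} to $F=\pd T^2$: the left-hand sum becomes $\sum_\al \mu(R_\al \cap U_E)^2 = \cE[\mu|E]$ and the right side becomes $C\mu(E)$, which is exactly \eqref{localE}. For REC $\Rightarrow$ Carleson, the pointwise inequality $\cE_E[\mu] \leq \cE[\mu|E] \leq C\mu(E)$ from Remark \ref{cE} is immediate. For REC $\Rightarrow$ Hereditary Carleson, I exploit that REC is self-improving under restriction: since $(\mu|E)|F = \mu|(E \cap F)$, one has
$$
\cE[(\mu|E)|F] = \cE[\mu|(E \cap F)] \leq C\mu(E \cap F) = C(\mu|E)(F),
$$
so $\mu|E$ again lies in REC with the same constant and is Carleson by the previous step, proving $\mu$ hereditary Carleson.

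For the hard direction Carleson $\Rightarrow$ REC, fix $E \subset \pd T^2$ and split
$$
\cE[\mu|E] = \sum_{R_\al \subset U_E} \mu(R_\al)^2 + \sum_{R_\al \not\subset U_E} \mu(R_\al \cap U_E)^2.
$$
The first piece equals $\cE_E[\mu] \leq C\mu(E)$ directly from Carleson. The second piece is the source of all difficulty: rectangles that only partially meet $U_E$ contribute $\mu(E \cap R_\al)^2$, and these terms are invisible to Carleson applied to $E$ itself since $\cR_E$ records only rectangles entirely inside $U_E$. The plan is to use the full strength of the Carleson hypothesis -- which is tested against \emph{every} boundary subset -- by applying it to the traces $E \cap R_\al$, or more efficiently to families of such traces grouped by the generation of $\al$, and then recombining.

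The main obstacle is that on a simple tree the analogous partial-overlap sum telescopes over generations by a single pigeonhole, because $R_\al$ can stick out of $U_E$ in only one direction; on the bi-tree $R_\al$ can exit $U_E$ independently in the horizontal and vertical directions. Writing the energy as a bilinear form
$$
\cE[\mu|E] = \sum_{\om, \om' \in E} \mu(\om) \mu(\om') \, N(\om, \om'),
$$
where $N(\om,\om')$ is the number of common ancestors in $T^2$ and factors as a product of two one-dimensional generation counts, the cross-terms between pairs that are close in one coordinate and far in the other form exactly the obstruction. I expect the ``subtle comparisons of energies'' promised in the abstract to take the form of inequalities $\cE[\mu|E] \leq A\,\cE_{E'}[\mu] + B\,\mu(E)$ for a suitably enlarged $E' \supset E$ that captures the partial-overlap contributions, coupled with an induction on $|E|$ or on the depth $N$; this is where the bulk of the work should lie.
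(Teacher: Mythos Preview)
Your handling of the formal triangle is fine: Hereditary Carleson $\Rightarrow$ REC (take $F=\pd T^2$ in \eqref{E:her2}), REC $\Rightarrow$ Carleson (Remark~\ref{cE}), and REC $\Rightarrow$ Hereditary Carleson (REC is stable under restriction) are exactly as in the paper's discussion after Theorem~\ref{main}.

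The hard direction Carleson $\Rightarrow$ REC, however, is not proved in your proposal; you only describe a plan whose decisive step is left open. Your decomposition of $\cE[\mu|E]$ into rectangles inside $U_E$ and rectangles partially meeting $U_E$ is natural, and you correctly identify the partial-overlap sum as the obstruction. But the mechanism you suggest---grouping traces $E\cap R_\al$ by generation, writing the energy as a bilinear form with the kernel $N(\om,\om')$, and hoping for an inequality $\cE[\mu|E]\le A\,\cE_{E'}[\mu]+B\,\mu(E)$ for some enlarged $E'$, followed by induction on $|E|$ or depth---is not carried out, and it is not clear how it would close. The bi-tree obstruction you yourself describe (rectangles escaping $U_E$ independently in the two directions) is precisely what defeats any naive telescoping or generation-by-generation argument; you have named the difficulty but not resolved it.

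The paper's route is entirely different and does not proceed through your decomposition at all. It is a \emph{maximization/bootstrap} argument: choose $F_0$ realizing $C=\max_F \cE[\mu|F]/\mu(F)$; pass via Lemma~\ref{l:l3.1} to a subset $F_1$ on which the potential $\bV^{\mu_1}$ is uniformly $\gtrsim C$ while keeping a fixed fraction of the energy; then invoke the key comparison Lemma~\ref{l:l2} (proved in Section~\ref{pr32} via the majorization Theorem~\ref{2D2}) to show that the \emph{local} energy $\cE_E[\mu_1]$ over the level set $E=\{\bV^{\mu_1}\ge C/(12\,C_{\ref{lemma32}})\}$ controls the \emph{full} energy $\cE[\mu_1]$. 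Applying the Carleson hypothesis to this $E$ and combining with the extremality of $C$ yields a self-referential inequality $C\lesssim C^{1/2}$, hence $C$ is bounded. The analytic engine is the small-energy majorization on the bi-tree (Theorems~\ref{2D1}/\ref{2D2} and Lemma~\ref{cEcE}), not any direct combinatorial bookkeeping of overlapping rectangles. If you want to pursue your own line, you would need a genuinely new idea to control the partial-overlap sum; the paper provides no evidence that such a direct attack succeeds.
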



\subsection{Structure of the Paper} 

The big picture is summarized in Figure \ref{fig:Summ} (B):
first we are going to prove, in Section \ref{RECimp3}, the implication:

\begin{theorem}
\label{localEthm}
The restriction energy condition implies the embedding \eqref{bI}.
\end{theorem}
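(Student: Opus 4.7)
My plan is to prove this theorem via a duality reduction combined with a level-set decomposition of $\psi$ and a bilinear energy estimate. By the adjointness argument given immediately after Theorem~\ref{main}, item (3) is equivalent to item (4), so it suffices to establish that REC implies, for every $\psi \ge 0$ on $\pd T^2$,
$$
\cE[\psi\mu] = \sum_{\al\in T^2}|\bI^*(\psi\mu)(\al)|^2 \le C\sum_{\om\in\pd T^2}|\psi(\om)|^2\mu(\om).
$$
The right-hand side is $C\|\psi\|^2_{\ell^2(\pd T^2;\mu)}$, and the hypothesis REC is precisely this inequality tested against characteristic functions $\psi = \1_E$. The whole task is thus a Sawyer-type problem: upgrading testing on indicators to the full embedding.

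\emph{Level-set decomposition.} Normalize so that $\psi$ takes dyadic values, and put $A_k := \{\psi = 2^k\}$, $E_k := \{\psi \ge 2^k\} = \bigsqcup_{j\ge k}A_j$, $\mu_k := \mu|A_k$. Then $\psi\mu = \sum_k 2^k \mu_k$, and expanding the square gives
$$
\cE[\psi\mu] = \sum_{k,l \in \bZ} 2^{k+l}\langle\bI^*\mu_k,\bI^*\mu_l\rangle_{\ell^2(T^2)}.
$$
The diagonal ($k=l$) terms are handled immediately by REC applied to $\mu|A_k$: $\cE[\mu_k] \le C\mu(A_k)$, and $\sum_k 4^k\mu(A_k) \asymp \|\psi\|^2_{\ell^2(\pd T^2;\mu)}$ already delivers the desired bound. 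All the real work lies in the off-diagonal cross-terms.

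\emph{The off-diagonal estimate.} For $k<l$ (so $A_l \subset E_l \subset E_k$), a naive Cauchy--Schwarz combined with REC yields only $\langle\bI^*\mu_k,\bI^*\mu_l\rangle \le C\sqrt{\mu(A_k)\mu(A_l)}$, whose geometric summation controls only the Lorentz norm $\|\psi\|_{L^{2,1}(\mu)}$, strictly larger than $\|\psi\|_{\ell^2(\pd T^2;\mu)}$. A sharper estimate should rewrite
$$
\langle\bI^*\mu_k,\bI^*\mu_l\rangle = \sum_{\al\in T^2}\mu(R_\al\cap U_{A_k})\,\mu(R_\al \cap U_{A_l}),
$$
and telescope using the nested inclusions $E_l \subset E_k$ together with the hereditary form of REC (which is available since REC is equivalent to hereditary Carleson). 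A bound of the shape $\langle\bI^*\mu_k,\bI^*\mu_l\rangle \le C\mu(E_{\max(k,l)})$ would suffice, since $\sum_{k<l}2^{k+l}\mu(E_l) \asymp \sum_l 4^l\mu(E_l) \asymp \sum_l 4^l\mu(A_l)$ and then the series closes.

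\emph{Main obstacle.} The crux is producing this sharp off-diagonal bound. On a single tree $T$ the chain structure of ancestors makes such comparisons tractable, and the Bellman-function method of the authors' earlier single-tree paper suffices. On the bi-tree the common-ancestor count $N(\om_1,\om_2) := \#\{\al\ge\om_1,\,\al\ge\om_2\}$ is bi-logarithmic rather than logarithmic in the distance of $\om_1\vee\om_2$ from the root, and no chain structure is available in the lattice $T^2$. Arranging the ``subtle comparisons of energies'' advertised in the abstract so that they produce the off-diagonal bound with the correct $\ell^2$-scaling---and, crucially, do not lose a logarithmic factor to the bi-parameter combinatorics---is in my view the principal technical hurdle of the proof.
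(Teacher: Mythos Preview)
Your outline correctly isolates the architecture of the argument---a level-set decomposition in which the diagonal is immediate from REC and the whole difficulty is the off-diagonal cross terms---but the specific off-diagonal estimate you aim for is too strong to be true, and the mechanism the paper actually uses is absent from your sketch.

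Concretely, the bound $\langle \bI^*\mu_k,\bI^*\mu_l\rangle \le C\mu(E_{\max(k,l)})$ cannot hold in general. Since $\psi$ is arbitrary you may take $\psi=\1_A+2^m\1_B$ for disjoint $A,B\subset\pd T^2$; your bound (with $k=0$, $l=m$, so $E_l=B$) then reads $\int \bV^{\mu|A}\,d\mu|B\le C\mu(B)$. Specializing $B=\{\om\}$ and $A=\supp\mu\setminus\{\om\}$ forces $\bV^\mu(\om)\le C$ for every $\om\in\supp\mu$, i.e.\ a \emph{pointwise} potential bound. REC does not imply this on the bi-tree; indeed the paper's Theorem~\ref{murho} only yields the weak-type decay $\mu\{\bV^\mu\ge\lambda\}\lesssim|\mu|/\lambda^{7/3}$, and the whole machinery of Sections~\ref{T1}--\ref{T2lemma} is built precisely because the potential need not be bounded.

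What the paper does instead is decompose in the \emph{primal} picture via $E_k=\{\bI\vf\ge 2^k\}$, $\mu_k=\mu|E_k$, so that the $E_k$ are nested, and then prove the mutual-energy estimate (Theorem~\ref{impr})
\[
\int \bV^{\mu|E}\,d\mu|F \le C\,\mu(E)^{3/7}\mu(F)^{4/7},\qquad F\subset E,
\]
an improvement over the trivial exponent $1/2$. Any exponent strictly above $1/2$ on the smaller mass is enough for the double sum to close via H\"older, and $4/7$ is what drops out. This estimate in turn rests on Theorem~\ref{2D}: if $\bV^\mu\le1$ on $\supp\mu$ and $\bV^\mu\ge\lambda$ on $F$, one can build $\vf\ge0$ with $\bI\vf\ge\lambda$ on $F$ and $\|\vf\|_{\ell^2(T^2)}^2\le C\lambda^{-1}\cE[\mu]$. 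The construction is a one-dimensional stopping-time ``majorization with small energy'' lemma (Lemma~\ref{Phi}) lifted to $T^2$ by slicing in one variable. This is the ``subtle comparison of energies'' the abstract advertises, and it is the ingredient your plan does not contain.
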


\begin{remark}
\label{localErk}
We already noticed the converse implication, see \eqref{rec1}, so this establishes the equivalence of restriction energy condition with embedding \eqref{bI}. Of course, Theorem \ref{localEthm} follows from \cite{AMPS}, but we wish to give  somewhat different proof avoiding the notion of capacity.
\end{remark}

The core result we will use for this is the following 

\begin{theorem}
\label{2D}
Let $\mu$ be a measure on $\pd T^2$ such that:
	\begin{itemize}
	\item $\bV^\mu \leq 1$ on $\supp(\mu)$;
	\item $\bV^\mu \geq \lambda$ on a set $F \subset \pd T^2$, for some large $\lambda$.
	\end{itemize}
Then there exists a positive function $\varphi$ on $T^2$ such that
	\begin{itemize}
	\item $\bI\varphi(\omega)\geq \lambda$, for all $\om \in F$, and
	\item $\|\varphi\|^2_{\ell^2(T^2)} \leq \frac{C}{\lambda}\cE[\mu]$. 
	\end{itemize}
\end{theorem}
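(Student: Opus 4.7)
My plan is to truncate $\bI^*\mu$ on the super-level set $\{\bV^\mu\geq\lambda/2\}$. The key preliminary observation is that whenever $\mu(R_\al)>0$ there exists $\om\in\supp(\mu)$ with $\om\leq\al$, whence $\{\beta:\beta\geq\al\}\subset\{\beta:\beta\geq\om\}$ and therefore
$$\bV^\mu(\al)=\sum_{\beta\geq\al}\mu(R_\beta)\leq\sum_{\beta\geq\om}\mu(R_\beta)=\bV^\mu(\om)\leq 1.$$
Hence $\bV^\mu(\al),\mu(R_\al)\leq 1$ wherever $\mu(R_\al)$ is nonzero, and this uniform bound on the ``mass side'' is what creates the gain of the factor $1/\lambda$.

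With this in hand I would try the candidate $\vf(\al):=2\mu(R_\al)\mathbf{1}_{\{\bV^\mu(\al)\geq\lambda/2\}}$. The $\ell^2$ bound is immediate: using $\mathbf{1}_{\{x\geq\lambda/2\}}\leq 2x/\lambda$ together with the preliminary observation,
$$\|\vf\|^2\leq\frac{8}{\lambda}\sum_{\al}\mu(R_\al)^2\bV^\mu(\al)\leq\frac{8}{\lambda}\sum_\al\mu(R_\al)^2=\frac{8}{\lambda}\cE[\mu],$$
valid on both the single tree and the bi-tree.

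Verifying $\bI\vf\geq\lambda$ on $F$ is the delicate step. On a single tree it is easy by telescoping along the ancestor chain from $\om\in F$ to the root: the ancestors where $\bV^\mu<\lambda/2$ form an initial segment starting at some $\al^*$, and $\sum_{\al\geq\om,\ \bV^\mu(\al)<\lambda/2}\mu(R_\al)=\bV^\mu(\al^*)<\lambda/2$, so the complementary sum exceeds $\lambda/2$ and $\bI\vf(\om)>\lambda$. On the bi-tree this argument genuinely fails: the ancestors of $\om$ form a two-dimensional lattice, the set $\{\al\geq\om:\bV^\mu(\al)<\lambda/2\}$ is a ``staircase'' rather than an interval, and one can construct examples where its $\mu$-mass is a multiple of $\lambda$. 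This is the main obstacle.

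To overcome it I would attempt one of two bi-parameter adaptations. First, iterate the one-tree mechanism one coordinate at a time, passing through an auxiliary measure on a single tree whose potential still exceeds a comparable threshold on the appropriate projection of $F$. Second, keep the super-level-set $\{\bV^\mu\geq\lambda/2\}$ but replace the crude indicator by a weighted cutoff tuned on its staircase boundary, and verify the pointwise bound through the two-dimensional discrete identity $\mu(R_{(i,j)})=\bV^\mu(i,j)-\bV^\mu(i-1,j)-\bV^\mu(i,j-1)+\bV^\mu(i-1,j-1)$ combined with the uniform bound $\bV^\mu\leq 1$ on $\supp(\mu)$. Either route requires a bi-parameter energy comparison, presumably the ``subtle comparisons of energies of measures on the bi-tree'' alluded to in the abstract.
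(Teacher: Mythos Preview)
Your proposal is not a proof: you correctly observe that the super-level-set truncation $\varphi=2\mu(R_\al)\mathbf{1}_{\{\bV^\mu\geq\lambda/2\}}$ gives the $\ell^2$ bound, and you correctly diagnose that the pointwise bound $\bI\varphi\geq\lambda$ fails on the bi-tree because the set $\{\al\geq\om:\bV^\mu(\al)<\lambda/2\}$ is a staircase whose $\mu$-mass is not controlled. But you then stop, offering only two vague directions. Neither the ``weighted cutoff on the staircase boundary'' nor the appeal to the discrete mixed-difference identity is carried out, and it is not at all clear the second route can be made to work; the staircase region can carry mass of order $\lambda$ even after such adjustments.

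The paper does follow your first suggestion---slice coordinate-wise---but the missing idea is substantial. For each fixed $\alpha_y\in T_y$ one sets $G^{\alpha_y}(\gamma_x)=\sum_{\alpha'\geq\alpha_y}\mu(\gamma_x\times\alpha')$ and $f^{\alpha_y}(\gamma_x)=\mu(\gamma_x\times\alpha_y)$, and introduces the stopping family $\cS(\alpha_y)$ of maximal $\beta_x$ with $I_xG^{\alpha_y}(\beta_x)>1$. The crucial observation is that $f^{\alpha_y}$ vanishes on $W(\cS(\alpha_y))$. One then proves a single-tree majorization lemma: given $\sigma$ on $\partial T$, a stopping family $\cS$, and $f\geq0$ vanishing on $W(\cS)$, with $V^\sigma\geq\lambda$ on $F\subset\partial T\cap W(\cS)$ and $V^\sigma<1$ on $O(\cS)$, there exists $\Phi$ with $I\Phi\geq c\,If$ on $F$ and $\|\Phi\|_{\ell^2}\leq C\lambda^{-1}\|f\|_{\ell^2}$. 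The explicit construction is $\Phi(\alpha)=\lambda^{-1}If(\beta)\,I^*\sigma(\alpha)$ (for $\alpha\leq\beta\in\cS$), truncated where the local potential from $\beta$ down to $\alpha$ exceeds $\lambda$; note this is a \emph{relative} bound $I\Phi\geq c\,If$, not $I\Phi\geq c\lambda$. Setting $\varphi(\gamma_x,\alpha_y)=\Phi^{\alpha_y}(\gamma_x)$ and summing the energy estimate over $\alpha_y$ gives $\|\varphi\|^2\leq C\lambda^{-1}\cE[\mu]$, while summing the pointwise estimate over $\omega_y\leq\alpha_y\leq\alpha(\omega)$ (the largest $\alpha$ with $\bV^\mu(\omega_x\times\alpha)\geq\lambda/3$) recovers $\bI\varphi(\omega)\geq c(\bV^\mu(\omega)-\bV^\mu(\omega_x\times\widehat{\alpha(\omega)}))\geq c\lambda$. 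The stopping-time layer and the relative form of the single-tree lemma are exactly what your sketch is missing.
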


Sections \ref{T1} and \ref{T2lemma} will be dedicated to proving this deeper result.

In Section \ref{CtoREC} we show that

\begin{theorem}
\label{CtoREC}
The bi-parameter Carleson condition implies the restricted energy condition.
\end{theorem}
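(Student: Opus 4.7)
The plan is to establish $\cE[\mu|E] = \sum_{\al \in T^2}\mu(R_\al\cap U_E)^2 \le C\mu(E)$ by first isolating the part that is directly controlled by the Carleson hypothesis and then bounding the remainder via an auxiliary measure built from the maximal dyadic sub-rectangles of $U_E$. First I split
\[
\cE[\mu|E] = \sum_{\al:\,R_\al\subset U_E}\mu(R_\al)^2 + \sum_{\al:\,R_\al\not\subset U_E}\mu(R_\al\cap U_E)^2,
\]
so that the first sum is exactly $\cE_E[\mu]\le C\mu(E)$ by hypothesis, and all the work lies in the ``boundary'' sum over those $\al$ whose rectangle straddles the complement of $U_E$.

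To motivate the bi-tree argument, consider the single-tree archetype. Let $\cM$ denote the maximal dyadic sub-intervals of $U_E$ (these partition $U_E$) and introduce the auxiliary measure $\nu = \sum_{M\in\cM}\mu(M)\delta_M$, regarded as a boundary measure on the sub-tree $T^*$ whose nodes are the ancestors of $\cM$. A direct computation shows that $\mu$'s Carleson property on $T$ transfers to $\nu$'s Carleson property on $T^*$: for any $I\in T^*$ contained in the corresponding $U^{T^*}_{E^*}$ one has $\mu(I)=\nu(I)$, so $\mu$'s Carleson inequality applied to $E^{**}=\pd T\cap U^{T^*}_{E^*}$ yields exactly the desired Carleson sum for $\nu$. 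Then the base Carleson identity $\cE[\nu]_{T^*}\le C\nu(\pd T^*)=C\mu(E)$ controls the boundary contribution, since for each strict ancestor $\al$ of some $M\in\cM$ we have $\mu(R_\al\cap U_E)=\nu(R_\al)$.

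On the bi-tree, the maximal dyadic sub-rectangles of $U_E$ need not partition $U_E$---they can overlap in nontrivial bi-parameter patterns---so the collapsing-to-an-ancestor-subtree construction breaks down. I would try to recover the estimate by iterating in coordinates: for each fixed first-coordinate node $\al_1\in T$, consider the slice measure $\nu_{\al_1}(\om_2):=\sum_{\om_1\le\al_1}\mathbf{1}_E(\om_1,\om_2)\mu(\om_1,\om_2)$ on the second tree, reduce the $\al_2$-sum to a single-tree energy $\cE[\nu_{\al_1}]_T$ by the archetype above, and then sum over $\al_1$ using a Carleson-type estimate in the first coordinate. An alternative is a two-stage stopping-time argument that refines the maximal-rectangle decomposition into two families of nested intervals, one per coordinate.

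The main obstacle is precisely the overlap of the bi-parameter maximal rectangles: the clean identity $\mu(R_\al\cap U_E)=\sum_{M\in\cM(\al)}\mu(M)$ from the single-tree case fails, and $\sum_M\mu(M)$ can substantially overestimate $\mu(R_\al\cap U_E)$ in a way that must be carefully tracked. Controlling this overcount while closing the two-coordinate iteration---without accumulating extra powers of the depth $N$---is the technical heart of the argument, and is presumably what the abstract refers to as ``subtle comparisons of energies of measures on the bi-tree.''
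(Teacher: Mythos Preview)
Your proposal is not a proof but a plan that you yourself identify as blocked at the crucial step. The single-tree argument via maximal sub-intervals is correct and standard, but for the bi-tree you only sketch two possible attacks (coordinate iteration, bi-parameter stopping times) and then correctly observe that the overlap of maximal sub-rectangles of $U_E$ is the essential obstruction. Neither sketch is carried out, and you give no mechanism by which the Carleson hypothesis alone would control the overcount; the coordinate-iteration idea in particular would require a Carleson-type bound on each slice measure $\nu_{\al_1}$, which the bi-parameter hypothesis does not obviously provide.

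The paper's proof takes an entirely different route. It does \emph{not} attempt to decompose $\cE[\mu|E]$ into ``inside'' and ``boundary'' pieces. Instead it runs an extremal argument: on the finite bi-tree, pick $F_0$ maximizing the ratio $C := \cE[\mu|F]/\mu(F)$, and show $C$ is bounded by an absolute constant. The mechanism is: (i) a peeling lemma (Lemma~\ref{l:l3.1}) produces $F_1 \subset F_0$ with $\mu_1 := \mu|F_1$ satisfying $\bV^{\mu_1} \geq C/3$ on $F_1$ and $\cE[\mu_1] \geq \tfrac{1}{6}\cE[\mu_0]$; (ii) the key energy-localization estimate (Lemma~\ref{l:l2}, proved via Lemma~\ref{cEcE}) shows that for such an almost-equilibrium $\mu_1$, at least half of $\cE[\mu_1]$ already sits on the super-level set $E := \{\bV^{\mu_1} \geq C/(12\,C_{\ref{lemma32}})\}$, namely $\cE_E[\mu_1] \geq \tfrac{1}{2}\cE[\mu_1]$; (iii) applying the Carleson hypothesis to this $E$ and the maximality of $C$ to $\mu|E$ closes a self-referential inequality that forces $C \lesssim C_{\ref{lemma32}}^2$. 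The depth lies in step~(ii): Lemma~\ref{cEcE} is obtained from the majorization Theorem~\ref{2D2} (a variant of Theorem~\ref{2D}), which in turn rests on the one-parameter lemma of Section~\ref{T1} together with a super-additivity/minimum-principle argument on trees (Lemmas~\ref{superadd}--\ref{2ptMinP}). Thus the ``subtle comparisons of energies'' referenced in the abstract are Lemma~\ref{cEcE} and its supporting apparatus, not any direct accounting of overlapping rectangles; the paper circumvents your obstacle rather than attacking it head-on.
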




\section{Embedding theorem for $REC$ measures}
\label{RECimp3}

We first need the following Lemma, which is the first place where we apply Theorem \ref{2D}.

\begin{lemma} \label{main_lm}
	Let $\mu, \rho$ be two $REC$ measures such that 
		\begin{itemize}
		\item $\bV^\mu \leq 1$ on $\supp(\mu)$, and
		\item $\bV^\mu \geq \lambda$ on $\supp(\rho)$.
		\end{itemize}
	Then
	\begin{equation} \label{main_ineq}
	|\rho| \le \frac{C}{\lambda^3} |\mu|\,.
	\end{equation}
\end{lemma}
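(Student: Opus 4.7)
The plan is to deduce Lemma \ref{main_lm} by testing the conclusion of Theorem \ref{2D} against $\rho$ and then exploiting the REC hypothesis on $\rho$ (together with $\bV^\mu\le 1$ on $\supp\mu$) to control the two energies that will appear after Cauchy--Schwarz.

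First I would apply Theorem \ref{2D} directly to $\mu$ with the set $F:=\supp(\rho)$, which is legitimate since both hypotheses of that theorem are verbatim the hypotheses of the lemma. This yields a non-negative $\varphi$ on $T^2$ with
\[
\bI\varphi(\omega)\ge \lambda \text{ on } \supp(\rho),\qquad \|\varphi\|_{\ell^2(T^2)}^2\le \frac{C}{\lambda}\cE[\mu].
\]
Integrating the pointwise lower bound against $\rho$ and then dualizing $\bI$ into $\bI^*$ (exactly as in the discussion after Theorem \ref{main}) gives
\[
\lambda|\rho|\le \sum_{\omega\in\pd T^2}\bI\varphi(\omega)\,\rho(\omega)=\sum_{\alpha\in T^2}\varphi(\alpha)\,\bI^*\rho(\alpha)=\sum_{\alpha\in T^2}\varphi(\alpha)\,\rho(R_\alpha).
\]
A Cauchy--Schwarz in $\ell^2(T^2)$ now bounds the right-hand side by $\|\varphi\|_{\ell^2(T^2)}\cdot\|\bI^*\rho\|_{\ell^2(T^2)}=\|\varphi\|_{\ell^2(T^2)}\cdot\cE[\rho]^{1/2}$.

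Next I would estimate the two energies in terms of total masses. For $\mu$, since $\bV^\mu\le 1$ on $\supp\mu$,
\[
\cE[\mu]=\int \bV^\mu\,d\mu\le \mu(\pd T^2)=|\mu|.
\]
For $\rho$, the REC hypothesis applied with $E=\pd T^2$ (so that $\rho|E=\rho$) gives $\cE[\rho]\le C|\rho|$. Plugging these two bounds into the Cauchy--Schwarz inequality yields
\[
\lambda|\rho|\le \sqrt{\frac{C}{\lambda}\,|\mu|}\cdot\sqrt{C\,|\rho|}=\frac{C'}{\sqrt{\lambda}}\sqrt{|\mu|\,|\rho|},
\]
and squaring and dividing by $|\rho|$ gives $\lambda^2|\rho|\le (C'')^2\lambda^{-1}|\mu|$, i.e.\ $|\rho|\le C\lambda^{-3}|\mu|$, which is \eqref{main_ineq}.

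The only non-routine input is Theorem \ref{2D}; everything else is duality, Cauchy--Schwarz, and two one-line energy bounds. So the main obstacle is really deferred to Sections \ref{T1} and \ref{T2lemma}, where Theorem \ref{2D} is proved; the lemma itself should be just the clean packaging described above, and the power $\lambda^{-3}$ (rather than $\lambda^{-1}$ or $\lambda^{-2}$) arises transparently as one factor of $\lambda^{-1/2}$ from the norm bound on $\varphi$ combined with the factor $\lambda^{-1}$ from dividing through by $\lambda$ at the end, together with the squaring step.
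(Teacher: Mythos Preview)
Your argument is correct and is essentially identical to the paper's own proof: apply Theorem \ref{2D} with $F=\supp(\rho)$, dualize and use Cauchy--Schwarz to get $\lambda|\rho|\le \|\varphi\|_{\ell^2}\cE[\rho]^{1/2}$, then bound $\cE[\rho]\le C_\rho|\rho|$ by REC and $\cE[\mu]\le|\mu|$ from $\bV^\mu\le 1$ on $\supp\mu$. The only cosmetic difference is that the paper inserts the REC bound on $\rho$ before squaring rather than after, but the chain of inequalities and the resulting $\lambda^{-3}$ are the same.
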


\begin{proof}
By Theorem \ref{2D}, there is a function $\vf$ on $T^2$ such that
	$$\bI\vf(\om) \geq \lambda, \:\: \forall \om\in \supp(\rho)\:\:\text{ and }\:\:
		\|\vf\|^2_{\ell^2(T^2)} \leq \frac{C}{\lambda} \cE[\mu].$$
Then:
	$$\lambda |\rho| = \int \lambda\,d\rho \leq \int (\bI\vf)\,d\rho = \int \vf (\bI^*\rho) 
		\leq \|\vf\|_{\ell^2(T^2)} \cE[\rho]^{1/2} \leq C_\rho^{1/2} \|\vf\|_{\ell^2(T^2)} |\rho|^{1/2},$$
where we applied the $REC$ property in the last inequality. So
	$$|\rho| \leq \frac{C_\rho}{\lambda^2} \|\vf\|_{\ell^2(T^2)}^2 \leq \frac{C_\rho C}{\lambda^3} \cE[\mu] 
		\leq \frac{C_\rho C}{\lambda^3} |\mu|,$$
where the last inequality follows from the first assumption:
	$$\cE[\mu] = \int \bV^\mu\,d\mu \leq \int\,d\mu = |\mu|.$$
\end{proof}

\begin{theorem}
\label{murho}
Let $\mu, \rho$ be two $REC$ measures such that
$\bV^\mu\ge \lambda$ on $\supp \rho$. Then 
	$$ |\rho| \le C\frac{|\mu|}{\lambda^{7/3}}.$$
\end{theorem}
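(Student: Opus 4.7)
The plan is to reduce to Lemma~\ref{main_lm} by splitting $\mu$ according to the size of its potential, and matching each piece with a corresponding piece of $\rho$. Fix a parameter $M>0$ to be chosen, and decompose $\mu = \mu' + \mu''$ with $\mu' = \mu|\{\bV^\mu \le M\}$ and $\mu'' = \mu|\{\bV^\mu > M\}$, and $\rho = \rho' + \rho''$ with $\rho' = \rho|\{\bV^{\mu'} \ge \lambda/2\}$ and $\rho'' = \rho - \rho'$. Since $\bV^\mu = \bV^{\mu'} + \bV^{\mu''} \ge \lambda$ on $\supp\rho$, on $\supp\rho''$ we must have $\bV^{\mu''} \ge \lambda/2$. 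All four measures inherit the REC property from $\mu$ and $\rho$, since restrictions of REC measures are REC with the same constant.

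For $\rho'$ I apply Lemma~\ref{main_lm} to the rescaled measure $\mu'/M$. On $\supp \mu'$ one has $\bV^{\mu'/M} = \bV^{\mu'}/M \le \bV^\mu/M \le 1$, and on $\supp\rho'$ one has $\bV^{\mu'/M} \ge \lambda/(2M)$. The lemma yields
$$|\rho'| \le \frac{C}{(\lambda/(2M))^3} \cdot |\mu'|/M = \frac{8C\,M^2}{\lambda^3}\,|\mu'| \le \frac{8C\,M^2}{\lambda^3}\,|\mu|.$$

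For $\rho''$ I use a direct Cauchy--Schwarz bound. Since $\bV^{\mu''} \ge \lambda/2$ on $\supp\rho''$, writing the pairing through $\bI^*$,
$$\frac{\lambda}{2}|\rho''| \le \int \bV^{\mu''}\,d\rho'' = \sum_\alpha (\bI^*\mu'')(\alpha)(\bI^*\rho'')(\alpha) \le \cE[\mu'']^{1/2}\cE[\rho'']^{1/2} \le C\sqrt{|\mu''|\,|\rho''|},$$
where REC (applied with $E=\pd T^2$) gives $\cE[\mu''] \le C|\mu''|$ and $\cE[\rho''] \le C|\rho''|$. Hence $|\rho''| \le C|\mu''|/\lambda^2$. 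The mass $|\mu''|$ is in turn small: $M|\mu''| \le \int_{\{\bV^\mu > M\}}\!\! \bV^\mu\,d\mu \le \cE[\mu] \le C|\mu|$, so $|\rho''| \le C|\mu|/(M\lambda^2)$.

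Combining the two estimates, $|\rho| \le C\bigl(M^2/\lambda^3 + 1/(M\lambda^2)\bigr)|\mu|$, and choosing $M = \lambda^{1/3}$ balances the two terms and yields the claimed bound $|\rho| \le C|\mu|/\lambda^{7/3}$. The only subtle point is the decomposition of $\rho$: splitting according to which of $\bV^{\mu'}$ and $\bV^{\mu''}$ carries half of $\bV^\mu$ (rather than, say, splitting by the size of $\bV^\mu$ itself) is what makes Lemma~\ref{main_lm} applicable to $\rho'$ while keeping the direct bound strong enough on $\rho''$; everything else is a one-parameter optimization.
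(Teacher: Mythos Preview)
Your proof is correct. The ingredients are the same as in the paper --- split $\mu$ at a potential threshold $M=\lambda^{1/3}$, apply Lemma~\ref{main_lm} to the piece with bounded potential, and handle the piece with large potential via Cauchy--Schwarz and the REC bound $\cE[\nu]\le C|\nu|$ together with the Chebyshev estimate $|\mu''|\le C|\mu|/M$ --- and the optimization $M=\lambda^{1/3}$ is identical. The difference is organizational: the paper argues by dichotomy (either $|\rho|\le K|\mu|/\lambda^{2+\ep}$ already, or else a Chebyshev argument shows that on at least a $(1-\delta)$-fraction of $\rho$ the potential $\bV^{\mu_1}$ exceeds $(1-\delta)\lambda$, and Lemma~\ref{main_lm} applies to that portion), whereas you split $\rho$ deterministically according to which of $\bV^{\mu'}$, $\bV^{\mu''}$ carries half of $\bV^\mu$. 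Your decomposition is cleaner --- it removes the good/bad case split and the auxiliary parameters $K$, $\delta$ --- while the paper's route makes slightly more explicit how the constants depend on $C_\mu$ and $C_\rho$. Both yield the same exponent $7/3$ for the same reason.
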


Remark that this result is similar to Lemma \ref{main_lm}, but much stronger as we are missing the boundedness assumption 
for $\bV^\mu$ on $\supp(\mu)$. We will get around this by constructing two new measures $\mu_1$, $\rho_1$ from $\mu$, $\rho$
to which Lemma \ref{main_lm} can be applied.

\begin{proof}
Let $C_\mu$, $C_\rho$ denote the $REC$ constants of $\mu$, $\rho$, respectively, and let
	$$F_\ep := \{ \om\in\pd T^2\: :\: \bV^\mu(\om) \geq \lambda^\ep\},$$
for some $\ep>0$ we will choose later to be $1/3$.

Split $\mu$ into
	$$\mu = \mu_0+\mu_1,$$
where
	$$\mu_0 := \mu|F_\ep.$$
We make some quick observations about these measures. First of all, obviously
	\begin{equation} \label{E:mu1bdd}
	\bV^{\mu_1} < \lambda^\ep \text{ on } \supp(\mu_1).
	\end{equation}
Observe that if we scale by $\lambda^\ep$ we have the first boundedness condition in Lemma \ref{main_lm}. So now we need to construct a complementary measure $\rho_1$ such that $\bV^{\mu_1} \geq c\lambda$ on $\supp(\rho_1)$.

Second of all, by Chebyshev,
	$$|\mu_0| = \mu(F_\ep) = \mu\{\bV^\mu \geq \lambda^\ep\} \leq \frac{1}{\lambda^\ep} \int \bV^\mu\,d\mu
		= \frac{1}{\lambda^\ep}\cE[\mu] \leq \frac{1}{\lambda^\ep} C_\mu |\mu|,$$
so
	\begin{equation}\label{Ch}
	|\mu_0| \leq C_\mu \frac{|\mu|}{\lambda^\ep}.
	\end{equation}
Third, it is easy to see that both $\mu_0$ and $\mu_1$ are also $REC$ measures with constant $C_\mu$.
Finally,
	\begin{equation}\label{E:Vmu0bd}
	\int \bV^{\mu_0}\,d\rho \leq (C_\mu C\rho |\mu_0| |\rho|)^{1/2}.
	\end{equation}
To see this:
	$$ \int\bV^{\mu_0}\,d\rho = \int \bI (\bI^*\mu_0)\,d\rho = \int (\bI^*\mu_0)(\bI^*\rho)
	\leq \|\bI^*\mu_0\|_{\ell^2(T^2)} \|\bI^*\rho\|_{\ell^2(T^2)}
		= (\cE[\mu_0]\cE[\rho])^{1/2},$$
and apply the $REC$ property to the last terms.

Returning now to $\rho$, we will say that $\rho$ is ``good'' if
	\begin{equation}\label{E:rhogood}
	|\rho| \leq K \frac{|\mu|}{\lambda^{2+\ep}},
	\end{equation}
for some large constant $K$ we will choose later. Suppose this is not the case though, and combine the ``badness'' of $\rho$
with \eqref{Ch} to obtain:
	$$|\mu_0| \leq C_\mu \frac{|\mu|}{\lambda^\ep} \leq C_\mu \frac{1}{\lambda^\ep} \frac{\lambda^{2+\ep}}{K}|\rho|
		= \frac{C_\mu}{K}\lambda^2|\rho|.$$
Then \eqref{E:Vmu0bd} gives us
	\begin{equation}\label{E:Vmu0bd2}
	\int \bV^{\mu_0}\,d\rho \leq \frac{C_\mu\sqrt{C_\rho}}{\sqrt{K}}\lambda|\rho|.
	\end{equation}

Consider now a fixed, very small $\delta>0$. Again by Chebyshev:
	$$\rho\{\bV^{\mu_0} > \delta\lambda\} \leq \frac{1}{\delta\lambda} \int \bV^{\mu_0}\,d\rho
		\stackrel{\eqref{E:Vmu0bd2}}{\leq} \frac{C_\mu\sqrt{C_\rho}}{\delta\sqrt{K}}|\rho|.$$
Keep in mind now that, by assumption,
 $\supp(\rho) \subset \{\bV^\mu \geq \lambda\} = \{\bV^{\mu_0} +\bV^{\mu_1} \geq\lambda\}$, and so
 	$$\rho\{\bV^{\mu_0} > \delta\lambda\} = |\rho| - \rho\{\bV^{\mu_0} \leq \delta\lambda\}
		\geq |\rho| - \rho\{\bV^{\mu_1} \geq (1-\delta)\lambda\},$$
which, combined with the estimate above, leads us to
	\begin{equation}\label{E:rhobd}
	\rho\{\bV^{\mu_1} \geq (1-\delta)\lambda\} \geq |\rho| \bigg( 1 - \frac{C_\mu\sqrt{C_\rho}}{\delta\sqrt{K}}\bigg) \geq |\rho|(1-\delta),
	\end{equation}
where the last inequality comes from us finally choosing $K$ large enough so that
	$$\frac{C_\mu\sqrt{C_\rho}}{\delta\sqrt{K}} \leq \delta.$$
For instance, if $\delta=0.01$, the relationship in \eqref{E:rhobd} becomes ``on 99\% of $\rho$ we have $\bV^{\mu_1} \geq 0.99\lambda$.''

Call now $F := \{\bV^{\mu_1} \geq (1-\delta)\lambda\}$ and set
	$$\rho_1 := \rho|F,$$
so $\rho_1$ is another $REC$ measure of basically the same mass as the original $\rho$, but it has the right relationship to $\bV^{\mu_1}$ to allow for Lemma \ref{main_lm}. Specifically, if we also let
	$$\kappa := \frac{\mu_1}{\lambda^\ep},$$
then $\kappa$ and $\rho_1$ are two $REC$ measures such that
	\begin{itemize}
	\item $\bV^\kappa \leq 1 $ on $\supp(\kappa)$ by \eqref{E:mu1bdd}, and
	\item $\bV^\kappa \geq \frac{1-\delta}{\lambda^{\ep-1}}$ on $\supp(\rho_1)$ by \eqref{E:rhobd}.
	\end{itemize}
By Lemma \ref{main_lm} we have
	$$|\rho_1| \leq \frac{C|\mu|}{\lambda^{3-2\ep}}.$$
Now note that \eqref{E:rhobd} translates to $|\rho_1| = \rho(F) \geq |\rho|(1-\delta)$, so
	\begin{equation}\label{E:rhobad}
	|\rho| \leq \frac{1}{1-\delta}|\rho_1| \leq \frac{1}{1-\delta} \frac{C|\mu|}{\lambda^{3-2\ep}}.
	\end{equation}
	
So, there are two possibilities: either $\rho$ is good, in which case \eqref{E:rhogood} holds, or $\rho$ is bad and then \eqref{E:rhobad} holds. If we set $2+\ep = 3-2\ep$, so $\ep=1/3$, then either estimate will yield the desired result.
\end{proof}


\subsection{Mutual energy of pieces of $REC$ measures}
\label{mutual}

Let $\mu$ be an $REC$ measure and let $F\subset E\subset \pd T^2$. Below is the trivial estimate of the mutual energy: 
	\begin{eqnarray*}
	\int \bV^{\mu|E} d\mu|F &=& \int (\bI^*(\mu|E))\cdot  (\bI^*(\mu|F))\\
			&\leq& \|\bI^*(\mu|E)\|_{\ell^2(T^2)} \: \|\bI^*(\mu|F)\|_{\ell^2(T^2)}
				= \big(\cE[\mu|E]\cE[\mu|F]\big)^{1/2}\\
			&\leq& C_\mu \big(\mu(E)\mu(F)\big)^{1/2}
	\end{eqnarray*}
Here is the improvement.

\begin{theorem}
\label{impr}
Let $\mu$ be $REC$ measure, and let $F\subset E\subset T^2$. Then
$$
\int \bV^{\mu|E} d\mu|F \le C\mu(E)^{3/7}\mu(F)^{4/7}\,.
$$
\end{theorem}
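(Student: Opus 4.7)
\smallskip

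\noindent\textbf{Proof Plan for Theorem \ref{impr}.}

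The plan is to pass from the duality-based bound used in the trivial estimate (which just uses $\cE[\mu|E]^{1/2}\cE[\mu|F]^{1/2}$) to a level-set / layer-cake argument that exploits the full strength of Theorem \ref{murho}. The exponent $3/7$ appearing in the target inequality matches exactly the reciprocal of the exponent $7/3$ coming out of Theorem \ref{murho}, which is what makes this natural.

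First, I would use the layer cake formula to rewrite
\begin{equation*}
\int \bV^{\mu|E}\,d(\mu|F) \;=\; \int_0^\infty (\mu|F)\{\bV^{\mu|E}\ge\la\}\,d\la \;=\; \int_0^\infty \mu(F_\la)\,d\la,
\end{equation*}
where I set $F_\la := F\cap\{\om\in\pd T^2 : \bV^{\mu|E}(\om)\ge \la\}$. The measure $\mu|F_\la$ has support contained in $F_\la$, on which by construction $\bV^{\mu|E}\ge \la$. Since restrictions of $REC$ measures remain $REC$ (with the same constant, by taking subsets in \eqref{localE}), both $\mu|E$ and $\mu|F_\la$ are $REC$, so Theorem \ref{murho} applies and yields
\begin{equation*}
\mu(F_\la) \;=\; |\mu|F_\la| \;\le\; C\,\frac{|\mu|E|}{\la^{7/3}} \;=\; C\,\frac{\mu(E)}{\la^{7/3}}.
\end{equation*}
On the other hand, trivially $\mu(F_\la)\le \mu(F)$.

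Next I would optimize the split point. Choose $\la_0$ where the two bounds coincide, i.e.\ $\mu(F)=C\mu(E)/\la_0^{7/3}$, giving $\la_0 = (C\mu(E)/\mu(F))^{3/7}$. Then
\begin{equation*}
\int_0^\infty\mu(F_\la)\,d\la \;\le\; \int_0^{\la_0}\mu(F)\,d\la + \int_{\la_0}^\infty C\frac{\mu(E)}{\la^{7/3}}\,d\la \;=\; \la_0\mu(F) + \tfrac{3C}{4}\mu(E)\la_0^{-4/3}.
\end{equation*}
Both summands, after substituting $\la_0$, reduce to a constant multiple of $\mu(E)^{3/7}\mu(F)^{4/7}$, giving the desired inequality.

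The only real obstacle is verifying the input to Theorem \ref{murho}; that verification is essentially immediate here because (i) the $REC$ class is closed under restriction to arbitrary subsets of $\pd T^2$, and (ii) the defining property of $F_\la$ is precisely the pointwise lower bound on $\bV^{\mu|E}$ that the theorem requires on $\supp\rho$. Everything else is layer cake plus a one-parameter optimization, and the exponent $4/7 = 1 - 3/7$ falls out automatically from the scaling $\la_0 \sim (\mu(E)/\mu(F))^{3/7}$.
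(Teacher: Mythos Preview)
Your proof is correct and follows essentially the same approach as the paper: a layer-cake decomposition of $\int \bV^{\mu|E}\,d(\mu|F)$ by the level sets $F_\la=\{\bV^{\mu|E}\ge\la\}\cap F$, followed by applying Theorem~\ref{murho} to bound $\mu(F_\la)\le C\mu(E)\la^{-7/3}$ for large $\la$ and the trivial bound $\mu(F_\la)\le\mu(F)$ for small $\la$, then optimizing the split. The paper carries this out with a dyadic decomposition $F_k=\{2^k\le \bV^{\mu|E}<2^{k+1}\}$ and phrases the optimization as bounding the average $\la=\int\bV^{\mu|E}\,d(\mu|F)/\mu(F)$, but the substance is identical.
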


\begin{proof} 
Let $k\ge 0$. Denote $F_k\df\{ x\in F: \bV^{\mu|E} \in [2^k, 2^{k+1})\}$. Then by Theorem \ref{murho} we have
$$
\mu(F_k) \le 2^{-7/3k} \mu(E)\,.
$$
Put $\lambda\df \frac{\int \bV^{\mu|E} d\mu|F }{\mu(F)}$.  If $\la\le 1$ then theorem follows trivially with $C=1$. Let $\lambda\in [2^j, 2^{j+1}), j\ge 0$. Repeating Lemma 2 of \cite{AMPS}  and using this display formula, we get
$$
\lambda\mu(F) \le \frac{\lambda}{2} \mu(F) + C\lambda^{-4/3} \mu(E)\Rightarrow \lambda^{7/3} \le C\frac{\mu(E)}{\mu(F)}.
$$
This gives the claim of the theorem.
\end{proof}


\subsection{Embedding theorem for $REC$ measures}
\label{emb}

\begin{proof}[Proof of Theorem \ref{localEthm}]
We start almost exactly as in \cite{AMPS}. We write
$$
E_k\df \{ \alpha\in \pd T^2: \bI\vf (\alpha) \ge 2^k\}.
$$
Unlike \cite{AMPS} we put $\mu_k\df \mu|E_k$.
Then
\begin{align*}
\int |\bI \vf|^2 d \mu & \lesssim \sum 2^{2k} |\mu_k| \le \sum 2^k \int( \bI\vf )\,d\mu_k = \sum 2^k \int \vf (\bI^*\mu_k)
\\
&\le \|\vf\|_{\ell^2(T^2)} \|\sum 2^k \bI^*\mu_k\|_{\ell^2(T^2)}.
\end{align*}
Expanding the square in $ \|\sum 2^k \bI^*\mu_k\|_{\ell^2(T^2)}^2$ we get $\sum_k\sum_{j\le k} 2^{j+k} \int \bV^{\mu_k}\, d\mu_j$. Consider the diagonal part $\sum_k 2^{2k} \int \bV^{\mu_k}\, d\mu_k=\sum_k 2^{2k}\cE[\mu|E_k] \le \sum_k 2^{2k} \mu(E_k)$. The last inequality uses  exactly $REC$ property. Thus the diagonal part is $\big(\int (\bI\vf)^2\, d\mu\big)^{1/2}$.

We are left to prove that off-diagonal part is $\lesssim \sum_k 2^{2k}\mu(E_k)$ as well. Here we follow \cite{AH}, \cite{AMPS}.
Using Theorem \ref{impr} we can write
\begin{align*}
\sum_k\sum_{j< k} 2^{j+k} \int \bV^{\mu_k}\, d\mu_j & \lesssim \sum_k 2^k |\mu_k|^{4/7}\sum_{j\le k} 2^j\,|\mu_j|^{3/7}
\\
& =  \sum_k 2^{8k/7 }|\mu_k|^{4/7}2^{-k/7}\sum_{j\le k} 2^j\,|\mu_j|^{3/7} 
\\
& \le \big(\sum 2^{2k} |\mu_k|\big)^{4/7} \bigg(\sum_k 2^{-k/3} \big(\sum_{j\le k} 2^j\,|\mu_j|^{3/7}\big)^{7/3}\bigg)^{3/7}.
\end{align*}

Now
\begin{align*}
\big(\sum_{j\le k} 2^j\,|\mu_j|^{3/7}\big)^{7/3} & \le \bigg(\sum_{j\le k} 2^{j/14} \, 2^{13j/14}\,|\mu_j|^{3/7}\bigg)^{7/3} 
\\
&\left(\bigg(\sum_{j\le k} (2^{j/14})^{4/7}\bigg)^{7/4}\cdot \bigg(\sum_{j\le k} (2^{13j/14}\,|\mu_j|^{3/7})^{7/3}\bigg)^{3/7}\right)^{7/3}
\\
&\lesssim \left( 2^{k/14}\bigg(\sum_{j\le k} 2^{13j/6} |\mu_j|\bigg)^{3/7}\right)^{7/3} = 2^{k/6} \bigg(\sum_{j\le k} 2^{13j/6} |\mu_j|\bigg).
\end{align*}
Combining this with the previous display formula, we get
\begin{align*}
\sum_k\sum_{j< k} 2^{j+k} \int \bV^{\mu_k}\, d\mu_j & \lesssim  \big(\sum 2^{2k} |\mu_k|\big)^{4/7}\bigg(\sum_k 2^{-k/6}\sum_{j\le k} 2^{13j/6} |\mu_j|\bigg)^{3/7}
\\
& \lesssim \big(\sum 2^{2k} |\mu_k|\big)^{4/7} \big(\sum 2^{2k} |\mu_k|\big)^{3/7}= \sum 2^{2k} |\mu_k|\,.
\end{align*}

\end{proof}





\section{Proof that the Bi-parameter Carleson condition implies REC}
\label{CtoREC}

\begin{proof}[Proof of Theorem \ref{CtoREC}]
We assume the bi-parameter Carleson condition:
\begin{equation}
\label{e:511}
\cE_E[\mu]= \sum_{R\in\mathcal{R}_E} \mu(R)^2 \leq \mu(E),\quad \forall\; E\subset \pd T^2.
\end{equation}
But let $F_0$ be a subset of $\pd T^2$ such that for $\mu_0\df \mu|F_0$ the following holds with a large constant $C$.

 \begin{equation}
 \label{e:55}
\mathcal{E}[\mu_0] = C|\mu_0|.
\end{equation}
Moreover, we can assume that 
\begin{equation}
\label{maxC}
C=\max_F\frac{\cE[\mu|F]}{|\mu(F)|}\,.
\end{equation}
This is because we assumed in Section \ref{intro} that $T^2$ is a finite graph (albeit a very large one).

\medskip

The main ingredient in estimating $C$ is provided by Lemma \ref{cEcE} of Section \ref{pr32}.
We are going to prove that
\begin{equation}
\label{C69}
C \le A\, C_{\ref{lemma32}}^2,
\end{equation}
where $C_{\ref{lemma32}}$ is the constant from Lemma \ref{cEcE} of Section \ref{pr32}.
\subsubsection{Part 1: making $\mu_0$ to be almost equilibrium} We start by introducing some additional notation. Given a set $E\subset \pd T^2$ and a measure $\nu$ we defined above the local energy of $\nu$ at $E$
\[
\mathcal{E}_E[\nu] := \sum_{R\in\mathcal{R}_E}\nu(R)^2.
\]
In particular, we have $\mathcal{E}_{Q_0}[\nu] = \mathcal{E}[\nu]$.

Now we have \eqref{e:55}, hence
\[
\int \mathbb{V}^{\mu_0}\,d\mu_0 =  \int C\,d\mu_0,
\]
which, as we will now see,  means that $\mathbb{V}^{\mu_0} \geq \frac{C}{3}$ on a major part of $\supp\mu_0$. For now we want to get rid of those points in $\supp\mu_0$ where the potential is not large enough whilst conserving the total energy. We do so by the power of the following lemma
\begin{lemma}\label{l:l3.1}
Assume that $\nu$ is a non-negative measure on $Q_0$, $\supp\nu = E\subset Q_0$ and
\[
\mathcal{E}[\nu] = \int \mathbb{V}^{\nu}\,d\nu \geq C\nu(E) = C|\nu|.
\]
Then there exists a set $\tilde{E} \subset E$ such that
\[
\mathbb{V}^{\tilde{\nu}} \geq \frac{C}{3},\quad \textup{on}\; \tilde{E},
\]
and 
\[
\mathcal{E}[\tilde{\nu}] \geq \frac{1}{6}\mathcal{E}[\nu].
\]
Here $\tilde{\nu} := \nu\vert_{\tilde{E}}$.
\end{lemma}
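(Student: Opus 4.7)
My plan is to build $\tilde{E}$ by iteratively removing, at each step, all points where the current restricted potential has dropped below $C/3$, and then to telescope the energy loss with a polarization identity that is just tight enough to survive at this critical threshold.

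Concretely, I would set $\nu_0 := \nu$ and, for $k\ge 0$, define
$B_k := \{x \in \supp\nu_k : \bV^{\nu_k}(x) < C/3\}$ and $\nu_{k+1} := \nu_k\vert_{\supp\nu_k \setminus B_k}$.
Because $T^2$ is finite, the sequence stabilizes at some step $K$: $\nu_K = \nu_{K+1} =: \tilde\nu$, and $\tilde E := \supp\tilde\nu$ automatically satisfies $\bV^{\tilde\nu} \ge C/3$ on $\tilde E$ by the stopping rule, which gives the potential half of the conclusion.

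For the energy bound, I would write $\sigma_k := \nu_k - \nu_{k+1} \ge 0$ (so $\supp\sigma_k \subset B_k$) and use the symmetric bilinear form
$\langle \mu,\rho\rangle := \int \bV^\mu\,d\rho = \int \bV^\rho\,d\mu = \langle \bI^*\mu, \bI^*\rho\rangle_{\ell^2(T^2)}$.
Polarization yields
\[
\cE[\nu_k] - \cE[\nu_{k+1}] = \langle \nu_k - \nu_{k+1}, \nu_k + \nu_{k+1}\rangle = \langle \sigma_k, \nu_k\rangle + \langle \sigma_k, \nu_{k+1}\rangle.
\]
Since $\nu_{k+1}\le\nu_k$ implies $\bV^{\nu_{k+1}} \le \bV^{\nu_k}$, and $\supp\sigma_k\subset B_k$ forces $\bV^{\nu_k}<C/3$ on that support, each inner product is at most $(C/3)|\sigma_k|$, so the drop per step is $\le (2C/3)|\sigma_k|$. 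Telescoping, using $\sum_k|\sigma_k|\le|\nu|$, and invoking the hypothesis $C|\nu|\le\cE[\nu]$ would then give
\[
\cE[\tilde\nu] \ge \cE[\nu] - \tfrac{2C}{3}|\nu| \ge \cE[\nu] - \tfrac{2}{3}\cE[\nu] = \tfrac{1}{3}\cE[\nu] \ge \tfrac{1}{6}\cE[\nu],
\]
which is even stronger than what is claimed.

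The main obstacle I anticipate is squeezing the per-step loss strictly below $C|\sigma_k|$. The direct expansion $\cE[\nu_k] = \cE[\nu_{k+1}] + 2\langle \nu_{k+1},\sigma_k\rangle + \cE[\sigma_k]$ only gives a bound of $C|\sigma_k|$ (both pieces are separately $\le(C/3)|\sigma_k|$), and telescoping this produces the vacuous $\cE[\tilde\nu] \ge \cE[\nu] - C|\nu| \ge 0$. It is the symmetric ``difference of squares'' form $\langle \sigma_k, \nu_k+\nu_{k+1}\rangle$, exploiting $\nu_{k+1}\le\nu_k$, that saves the factor $3/2$ precisely demanded by the threshold $C/3$ and makes the iteration close.
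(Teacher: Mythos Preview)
Your proof is correct. The iterative construction---successively removing the set where the current restricted potential falls below $C/3$---is exactly the one in the paper (there the threshold is normalized to $1$ by rescaling to $C=3$, and your $\nu_k$ is their $\nu-\sum_{j<k}\sigma_j$). Where you differ is in the energy bookkeeping: the paper expands $\cE[\nu]$ as a full double sum $\sum_{j,k}\langle\sigma_j,\sigma_k\rangle$ over all the peeled pieces together with the residual $\sigma_\infty=\tilde\nu$, and bounds the triangular part using $\bV^{\sum_{j\ge k}\sigma_j}\le C/3$ on $\supp\sigma_k$, arriving at $\cE[\nu]\le \tfrac{2C}{3}|\nu|+2\cE[\tilde\nu]$ and hence $\cE[\tilde\nu]\ge\tfrac16\cE[\nu]$. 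Your telescoping via the polarization $\cE[\nu_k]-\cE[\nu_{k+1}]=\langle\sigma_k,\nu_k+\nu_{k+1}\rangle$ is tidier and, because it uses $\nu_{k+1}\le\nu_k$ to bound \emph{both} summands by $(C/3)|\sigma_k|$, genuinely sharper: you obtain $\cE[\tilde\nu]\ge\tfrac13\cE[\nu]$. Both arguments rest on the same key observation, that $\bV^{\nu_k}<C/3$ on $\supp\sigma_k$, so the approaches are essentially the same with your accounting buying a factor of $2$ in the constant.
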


\begin{proof}
First we assume that $C = 3$ (otherwise we just rescale).
Let $E_0 := \{t\in E: \mathbb{V}^{\nu} \leq1\}$ and $\sigma_0 := \nu\vert_{E_0}$. Assume we have constructed $\sigma_j,\; j=0, \dots ,k-1$, and the sets $E_j$. We then define $E_k$ to be
\[
E_k = \{\omega\in E\setminus\bigcup_{j=0}^{k-1}E_j: \mathbb{V}^{\nu-\sum_{j=0}^{k-1}\sigma_j}(\omega) \leq 1\},
\]  
and we let $\sigma_k = \nu\vert_{E_k}$.

Since $T^2$ is finite, the procedure must stop at some (possibly very large) number $N$, i.e. $E_j = \emptyset$ for $j>N$. We let $E_{\infty} = E\setminus\bigcup_{j=0}^{N}E_j$ (we do not know yet if this set is non-empty), $\sigma_{\infty} = \nu\vert_{E_{\infty}}$. By construction we have
\[
\mathbb{V}^{\sigma_{\infty}}(\omega) \geq 1,\quad \omega \in E_{\infty}.
\]
Next we compute the energy of $\nu$,
\begin{equation}\notag
\begin{split}
&\mathcal{E}[\nu] = \int\mathbb{V}^{\nu}\,d\nu = \sum_{N\geq j\geq0}\sum_{N\geq k\geq0}\int \mathbb{V}^{\sigma_j}\,d\sigma_k =
2\sum_{N\geq k\geq0}\sum_{N\geq j\geq k}\int \mathbb{V}^{\sigma_j}\,d\sigma_k \\
&=2\sum_{N>k\geq0}\int\mathbb{V}^{\sum_{N\geq j\geq k}\sigma_j}\,d\sigma_k + 2\int \mathbb{V}^{\sigma_{\infty}}\,d\sigma_{\infty} 
\\
&=2\sum_{N>k\geq0}\int\mathbb{V}^{\nu-\sum_{j=0}^{k-1}\sigma_j}\,d\sigma_k + 2\int \mathbb{V}^{\sigma_{\infty}}\,d\sigma_{\infty}\\
&\leq 2\sum_{k=0}^{N}\int\,d\sigma_k + 2\mathcal{E}[\sigma_{\infty}] = 2|\nu| + 2\mathcal{E}[\sigma_{\infty}].
\end{split}
\end{equation}
Since $\mathcal{E}[\nu] \geq 3|\nu|$ by assumption, we have
\[
\mathcal{E}[\sigma_{\infty}] \geq \frac16\mathcal{E}[\nu],
\]
it remains to let $\tilde{\nu} := \sigma_{\infty}$, $\tilde{E} := E_{\infty}$, and we are done.
\end{proof}
We apply this lemma to $\mu_0$ and $F_0$ (we remind that $\mu_0 = \mu\vert_{F_0}$, so that $\supp\mu_0\subset F_0$) obtaining the set $F_{1}\subset F_0$ and a measure $\mu_{1} = \mu_0\vert_{F_{1}} = \mu\vert_{F_{1}}$ that satisfies $\mathbb{V}^{\mu_{1}} \geq \frac{C}{3}$ on $F_{1}$ and 
$$
\mathcal{E}[\mu_{1}] \geq \frac16\mathcal{E}[\mu_0]\,.
$$
 Finally we let
\begin{equation}\label{e:57}
E := \left\{t\in Q_0:\; \mathbb{V}^{\mu_{1}}(t)\geq \frac{1}{4 C_{\ref{lemma32}}}\cdot\frac{C}{3}\right\}\,.
\end{equation}

\subsubsection{Part 2: Why is $E$ the right set to consider? Main Lemma \ref{l:l2}.}
First we state another lemma that allows us to estimate the total energy of an almost equilibrium measure by its local energy at a certain level set. This is the main ingredient of proving that bi-parameter Carleson condition implies REC condition.
For the proof  of the following lemma see \cite{AMPS} and Lemma \ref{cEcE} of  Section \ref{pr32} below.
\begin{lemma}
\label{l:l2}
Let $\nu\geq0$ be a measure on $Q_0$ such that
\[
\mathbb{V}^{\nu}(t) \geq C_1,\quad t\in \supp\nu,
\]
and let 
\[
E := \left\{t\in Q_0:\; \mathbb{V}^{\nu}(t) \geq \frac{C_1}{4 C_{\ref{lemma32}}}\right\}.
\]
Then
\begin{equation}\label{e:59}
\mathcal{E}_E[\nu] \geq \frac{1}{2}\mathcal{E}[\nu].
\end{equation}
\end{lemma}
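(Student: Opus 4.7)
My plan proceeds in three steps.

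\smallskip
\textbf{Step 1 (Reduction to a potential inequality).} Because $\bV^\nu \geq C_1$ on $\supp\nu$ and the threshold $C_1/(4C_{\ref{lemma32}})$ defining $E$ is strictly smaller than $C_1$, we have $\supp\nu \subset E$. Exchanging the order of summation gives
\begin{equation*}
\cE[\nu]=\sum_{\omega\in\supp\nu}\nu(\omega)\,\bV^\nu(\omega),\qquad \cE_E[\nu]=\sum_{\omega\in\supp\nu}\nu(\omega)\,\bV^\nu_E(\omega),
\end{equation*}
where $\bV^\nu_E(\omega) := \sum_{\alpha\geq\omega,\,R_\alpha\subset U_E}\nu(R_\alpha)$ is the truncated potential summing only over rectangles sitting inside $U_E$. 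The target inequality \eqref{e:59} is therefore equivalent to
\begin{equation*}
\int\bigl(\bV^\nu-\bV^\nu_E\bigr)\,d\nu \;\leq\; \tfrac{1}{2}\int\bV^\nu\,d\nu,
\end{equation*}
and I would work with this reformulation.

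\smallskip
\textbf{Step 2 (Pointwise smallness of the ``bad'' tail).} The difference $\bV^\nu(\omega)-\bV^\nu_E(\omega)=\sum_{\alpha\geq\omega,\,R_\alpha\not\subset U_E}\nu(R_\alpha)$ is a sum over rectangles that stick out of $U_E$. For any such rectangle there is a boundary point $\omega^\ast\leq\alpha$ with $\omega^\ast\notin E$, i.e.\ $\bV^\nu(\omega^\ast)<C_1/(4C_{\ref{lemma32}})$, and hence
\begin{equation*}
\sum_{\beta\geq\alpha}\nu(R_\beta)\;\leq\;\sum_{\beta\geq\omega^\ast}\nu(R_\beta)\;=\;\bV^\nu(\omega^\ast)\;<\;\frac{C_1}{4C_{\ref{lemma32}}}.
\end{equation*}
This is the only pointwise information we will extract from the defect: once the running tail $\sum_{\beta\geq\alpha}\nu(R_\beta)$ climbs past $C_1/(4C_{\ref{lemma32}})$, we are forced to be in rectangles lying inside $U_E$.

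\smallskip
\textbf{Step 3 (Assembly via Lemma \ref{cEcE}).} The main obstacle is to convert the pointwise gap of Step 2 into an integrated $L^1(\nu)$ estimate on $\bV^\nu-\bV^\nu_E$. On a single tree this is routine: ``$R_\alpha\subset U_E$'' is a monotone stopping condition along each branch, and summation by parts along the unique chain above $\omega$ immediately yields the required inequality. On the bi-tree this monotonicity breaks down, since $R_\alpha\subset U_E$ depends on \emph{every} boundary square below $\alpha$, and the collection of ``bad'' rectangles above a fixed $\omega$ can have a very intricate shape in the quadrant $\{\alpha\geq\omega\}$. This is precisely where Lemma \ref{cEcE} is invoked: its sharp comparison with constant $C_{\ref{lemma32}}$ is designed so that the loss $4C_{\ref{lemma32}}$ built into the threshold defining $E$ is exactly absorbed and produces the factor $\tfrac{1}{2}$ in the integrated bound. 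Once Lemma \ref{cEcE} is in hand, inserting the pointwise estimate from Step 2 and applying the lemma to the resulting sum closes the argument in essentially one line. Establishing Lemma \ref{cEcE} itself is the genuine bi-parameter obstacle, and I would defer its proof to Section \ref{pr32} (following \cite{AMPS}).
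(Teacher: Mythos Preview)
Your proposal is correct and follows the same route as the paper. The paper's entire proof is the single sentence ``The second display inequality of Lemma \ref{cEcE} proves Lemma \ref{l:l2}'', and your Steps 1--3 unpack exactly that: Step 2 shows $\{R_\alpha\not\subset U_E\}\subset\{\alpha:\bV^\nu(\alpha)<C_1/(4C_{\ref{lemma32}})\}$, which after rescaling by $C_1$ is precisely the set $E_\delta$ of Lemma \ref{cEcE}, so $\cE[\nu]-\cE_E[\nu]\le\cE_\delta[\nu/C_1]\cdot C_1^2\le\tfrac12\cE[\nu]$ by that lemma. Your Step 1 reformulation via truncated potentials is a correct but unnecessary detour; the paper stays with the energy sums directly. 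The aside about summation by parts in the one-tree case is a bit misleading (the mechanism is the same inclusion $\{R_\alpha\not\subset U_E\}\subset E_\delta$ there too), but it does no harm.
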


\medskip


We already mentioned that this lemma will be proved in Section \ref{pr32}. Now we are going to use it.
Let $\sigma: = \mu\vert_{E}$. By definition,
\begin{equation}
\label{C3dot4}
\frac{C}{3\cdot4 C_{\ref{lemma32}}} |\sigma| \le \int\bV^{\mu_1} \, d\sigma\,.
\end{equation}
On the other hand,
$$
\int\bV^{\mu_1} \, d\sigma \le \cE[\mu_1]^{1/2} \cE[\sigma]^{1/2} \le \cE[\mu_1]^{1/2}\cE[ \mu\vert_{E}]^{1/2} \le 2^{1/2}\cE_E[\mu_1]^{1/2}\cE[ \mu\vert_{E}]^{1/2} ,
$$
where the last inequality follows from Lemma \ref{l:l2} applied to $\nu=\mu_1$ and $C_1= C/3$.

Now by assumption \eqref{e:511} we have $\cE_E[\mu_1]^{1/2} \le |\mu|F_1| =\mu(F_1) \le  \mu(E) = |\sigma|$. By  assumption \eqref{maxC} of maximality we have
$\cE[ \mu\vert_{E}] \le C \mu(E) = C|\sigma|$. Let us combine this with the last display formula to get
$$
\int\bV^{\mu_1} \, d\sigma \le 2^{1/2} C^{1/2} |\sigma|\,.
$$
Combine that with \eqref{C3dot4} to obtain
$$
\frac{C}{3\cdot4 C_{\ref{lemma32}}}\le 2^{1/2} C^{1/2},
$$
which gives us $C\le 288 C_{\ref{lemma32}}^2$, which we wanted to prove, as $C=\max_F\frac{\cE[\mu|F]}{|\mu(F)|}$.


\end{proof}

\bigskip

In the next section  we start to prepare the proof of Theorem \ref{2D}, whose proof will be finished in Section \ref{T2lemma}.

\section{Lemma on majorization with small energy. A case of ordinary tree}
\label{T1}

All trees below can be very deep, but it is convenient to think that they are finite.
Estimates will not depend on the depth.

First, some notation. For every dyadic interval $J$, we call:

\begin{minipage}{0.5\textwidth}\raggedleft
	\begin{itemize}[leftmargin=0in]
	\item $Q_J$ -- the square with base $J$;
	\item $Top_J$ -- the top half (rectangle) of $Q_J$.
	\end{itemize}
\end{minipage}
\hfill
\noindent\begin{minipage}{0.4\textwidth}
\includegraphics[width=\linewidth]{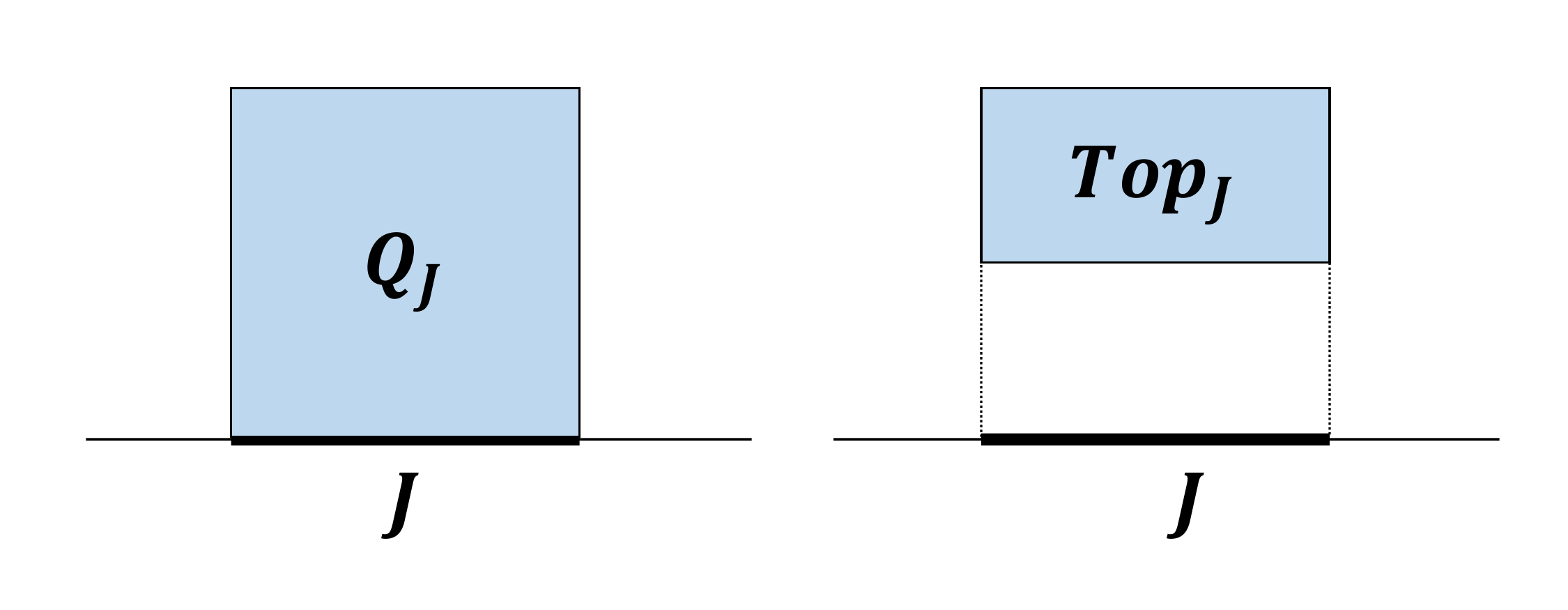}
\end{minipage}%

Let $I_0=[0,1]$ and identify the dyadic intervals in $\mathcal{D}(I_0)$ with vertices of the tree $T:= \mathcal{T}(I_0)$, as before. Let $\cS$ be a family of disjoint dyadic subintervals of $I_0$, and define:
	$$W(\cS) := \bigcup_{J\in\cS}Q_J,$$
	$$O(S) := Q_{I_0}\setminus W(\cS)=\cup_{\ell,\, Top_\ell \, \text{not in}\, Q_J, J\in S} Top_\ell.$$
To visualize these sets, one may think of the dyadic tree in the usual way, as in Figure \ref{fig:Pic2} (A), but in this section it may be more useful to identify each $J \in \mathcal{D}(I_0)$ with the rectangle $Top_J$, as in Figure \ref{fig:Pic2} (B).

\begin{figure}[h!]
  \centering
  \begin{subfigure}[b]{0.45\linewidth}
    \includegraphics[width=\linewidth]{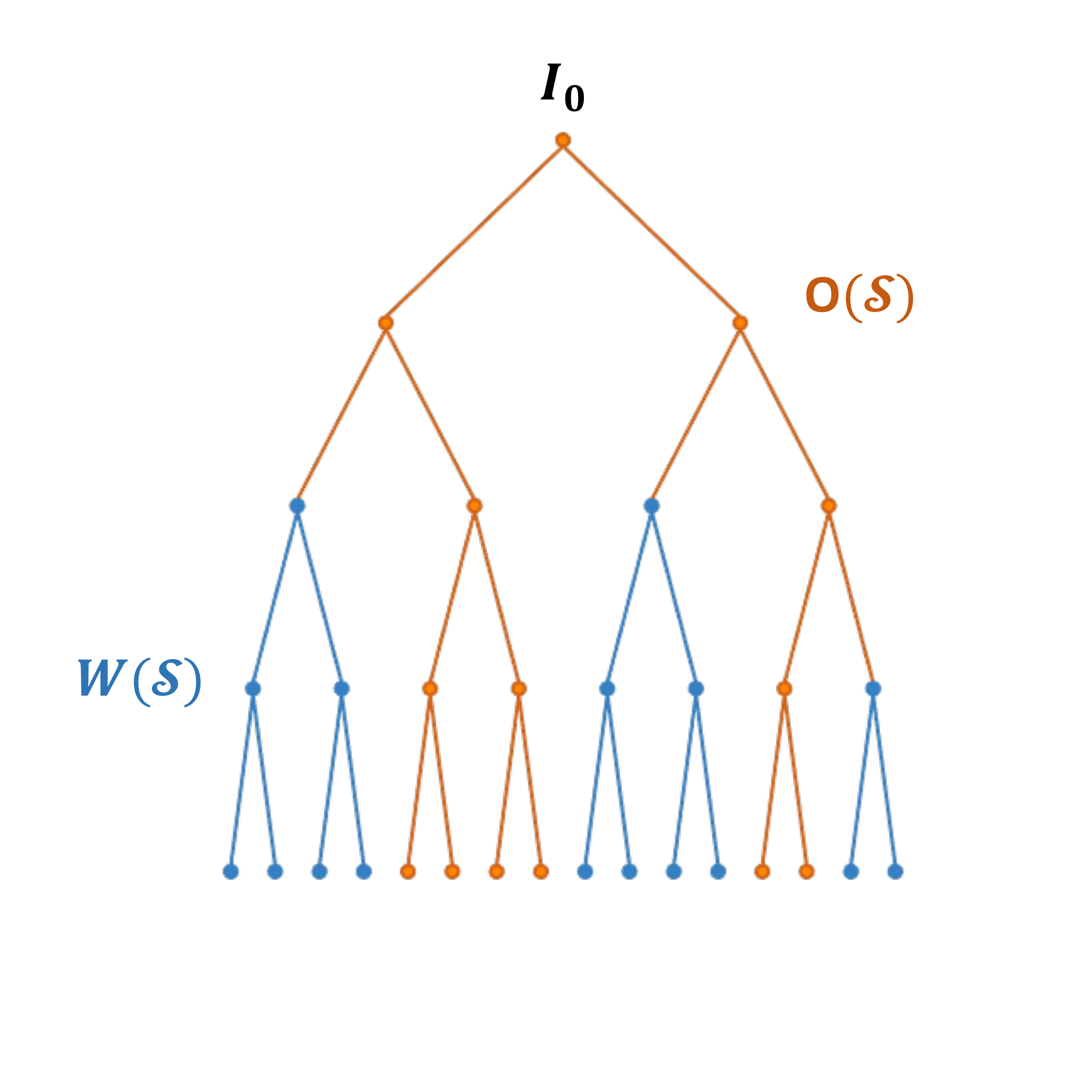}
      \caption{ }
  \end{subfigure}
  \begin{subfigure}[b]{0.45\linewidth}
    \includegraphics[width=\linewidth]{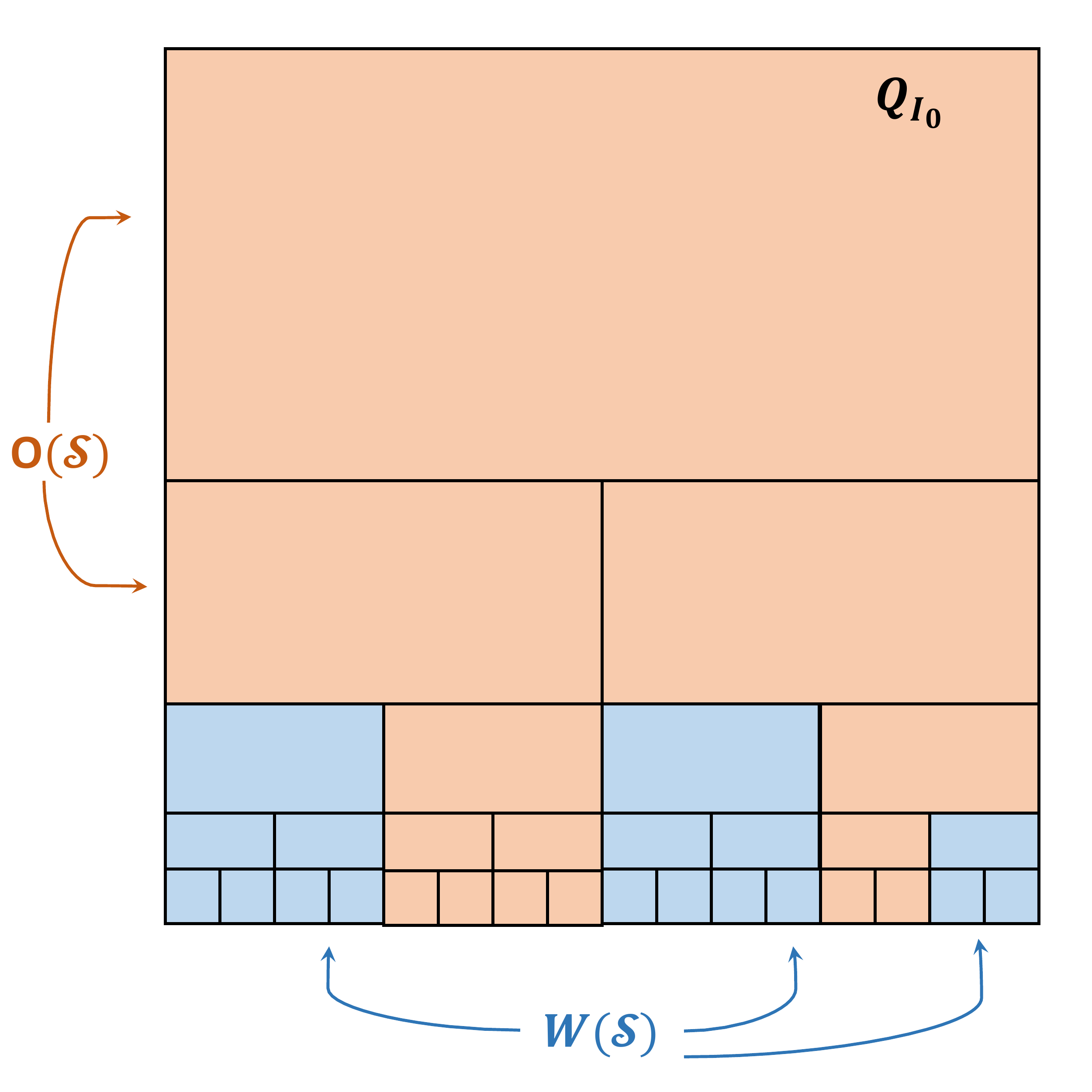}
    \caption{ }
  \end{subfigure}
  \caption{Dyadic tree and the sets $W(\cS)$ and $O(\cS)$.}
  \label{fig:Pic2}
\end{figure}

For vertices $\alpha$ of the tree $T$, we write $\alpha \in W(\cS)$ if there is a $\beta \in \cS$ such that $\alpha \leq \beta$.
Given a measure $\sigma$ on $T$, define:
	\begin{equation*}
	I_{\cS}^*\sigma(\alpha) := \left\{ \begin{array}{ll}
			I^*\sigma(\alpha), & \text{if } \alpha \in W(\cS)\\
			0, & \text{if } \alpha \in O(\cS).
		\end{array}\right.
	\end{equation*}
and the local potential:
	\begin{equation*}
	V_{\cS}^{\sigma}(\alpha) := \left\{\begin{array}{ll}
		\sum_{\alpha':\beta\geq\alpha'\geq\alpha} I^*\sigma(\alpha'), & \text{if } \alpha \leq \beta,\:\beta\in\cS \\
		0, & \text{if } \alpha \in O(\cS).
		\end{array}\right.
	\end{equation*}
Then  we conveniently have
	\begin{equation} \label{ell2}
	\sum I^*_{\cS}\sigma (\alpha)^2 = \int V^{\sigma}_{\cS} d\sigma\,.
	\end{equation}

\begin{lemma} \label{Phi}
Let $\sigma$ be a measure on $\partial T$ and $\cS$ be a collection of disjoint dyadic subintervals of $I_0$ satisfying $O(\cS)\neq\emptyset$.
Let $f\geq 0$ be a function on the tree $T$ such that $f=0$ on $W(\cS)$. Let $F\subset\partial T \cap W(\cS)$, and suppose that for a large $\lambda >>1$, the potential $V^{\sigma}$ satisfies:
	\begin{equation} \label{gela}
	V^{\sigma}(\omega)\geq\lambda,\quad \omega \in F\,,
	\end{equation}
and
	\begin{equation} 
	\label{le1}
	V^{\sigma}(\alpha) <1, \quad \alpha \in O(\cS)\,.
	\end{equation}
Then there exists another function $\Phi$ on $T$ such that, with positive absolute constants $c$, $C$:
	\begin{equation} \label{geIf}
	I\Phi(\omega) \geq c\, If(\omega),\quad \omega \in F\,,
	\end{equation}
and
	\begin{equation} \label{enest1}
	\|\Phi\|_{\ell^2(T)} \leq \frac{C}{\lambda} \|f\|_{\ell^2(T)}\,.
	\end{equation}
\end{lemma}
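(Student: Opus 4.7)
The plan is to decompose the construction along the antichain $\cS$. For each $J \in \cS$, let $F_J := F \cap \bigcup \{R_\omega : \omega \in \partial T,\ \omega \leq J\}$, so that $F$ becomes the disjoint union of the $F_J$. Because $f$ vanishes on $W(\cS)$, for every $\omega \in F_J$ the sum $If(\omega) = \sum_{\beta \geq \omega} f(\beta)$ reduces to the constant $If(J) := \sum_{\beta > J} f(\beta)$; that is, $If$ is piecewise constant on $F$ across the cells of $\cS$. This reduces the task to producing, on each $Q_J$, a local majorizer of the constant $If(J)$ on $F_J$, with good control of the total energy.

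For the local problem I would work with the restricted measure $\sigma_J := \sigma|_{R_J}$ inside the sub-tree $T_J$. The key observation is that for $\omega \in F_J$ the potential splits as $V^\sigma(\omega) = V^{\sigma_J}(\omega) + V^\sigma(J^{\mathrm{par}})$, and since $J^{\mathrm{par}} \in O(\cS)$ the hypothesis \eqref{le1} gives $V^\sigma(J^{\mathrm{par}}) < 1$, so $V^{\sigma_J}(\omega) > \lambda - 1$ on $F_J$. A classical equilibrium/capacity construction inside $T_J$ then furnishes a non-negative function $\Psi_J$ supported in $T_J$ with $I\Psi_J \geq 1$ on $F_J$ and $\|\Psi_J\|^2_{\ell^2(T_J)} \lesssim \sigma(F_J)/\lambda$. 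I would then set
\[
\Phi := c \sum_{J \in \cS} If(J)\, \Psi_J,
\]
which is supported in $W(\cS)$. Since the supports of the $\Psi_J$ lie in disjoint sub-trees, for $\omega \in F_J$ only the $J$-th term contributes to $I\Phi(\omega)$, producing $I\Phi(\omega) \geq c \cdot If(J) = c \cdot If(\omega)$, which gives \eqref{geIf}.

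The energy then satisfies $\|\Phi\|^2_{\ell^2(T)} \lesssim \lambda^{-1} \sum_{J \in \cS} (If(J))^2 \sigma(F_J)$, so the lemma is reduced to proving the weighted Carleson-type estimate
\[
\sum_{J \in \cS} (If(J))^2 \sigma(F_J) \;=\; \int_F (If)^2 \, d\sigma \;\leq\; \frac{C}{\lambda} \|f\|^2_{\ell^2(T)}.
\]
This is where I expect the main difficulty. A natural first step is to use $V^\sigma \geq \lambda$ on $F$ to majorize the left-hand side by $\lambda^{-1}\int_F (If)^2 V^\sigma \, d\sigma$, reducing the matter to an embedding of the form $\int (If)^2 \, d(V^\sigma \sigma) \leq C \|f\|^2$ for functions $f$ supported in $O(\cS)$. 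The Carleson constant of the weighted measure $V^\sigma \sigma$ should be controllable by the hypothesis $V^\sigma < 1$ on $O(\cS)$ together with the tree Carleson embedding theorem, but arranging this cleanly --- very likely through a stopping-time decomposition in the spirit of \cite{AH}, \cite{AMPS} --- is the technical heart of the lemma.
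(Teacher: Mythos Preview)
Your decomposition along $\cS$ mirrors the paper's, but the local capacity step contains a real gap. You claim an equilibrium construction produces $\Psi_J$ with $I\Psi_J \geq 1$ on $F_J$ and $\|\Psi_J\|^2 \lesssim \sigma(F_J)/\lambda$. This is false: the hypotheses place no mass of $\sigma$ on $F$. Let $\sigma$ be a single point mass at some boundary $\omega_1 \leq J$ with $\omega_1 \notin F_J$, large enough that $V^\sigma \geq \lambda$ on a nonempty $F_J$; then $\sigma(F_J) = 0$ while $\operatorname{cap}(F_J) > 0$, so no nonzero $\Psi_J$ can satisfy your bound. What a capacity argument actually gives is $\|\Psi_J\|^2 \leq \cE[\sigma_J]/(\lambda-1)^2$ (via $\Psi_J = I^*\sigma_J/(\lambda-1)$); what the paper's explicit truncation gives is $\|\Psi_J\|^2 \lesssim \sigma(R_J)/\lambda$. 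Neither reduces to $\int_F (If)^2\,d\sigma$, and once the correct quantity $\sigma(R_J)$ replaces $\sigma(F_J)$ your device of inserting $V^\sigma \geq \lambda$ on $F$ to gain an extra $\lambda^{-1}$ is no longer available.

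The paper handles both stages differently. Inside each $Q_\beta$, $\beta \in \cS$, it sets $\Phi(\alpha) = \lambda^{-1} If(\beta)\, I^*\sigma(\alpha)$ as long as the partial local potential $\sum_{\beta \geq \alpha' \geq \alpha} I^*\sigma(\alpha')$ stays below $\lambda$, and $\Phi(\alpha) = 0$ thereafter; this truncation simultaneously forces $I\Phi \gtrsim If$ on $F$ and caps the local energy, yielding $\|\Phi\|^2 \leq \lambda^{-1}\mathcal{I}$ with $\mathcal{I} := \sum_{\beta \in \cS} (If(\beta))^2 \sigma(R_\beta)$. The remaining bound $\mathcal{I} \lesssim \|f\|^2$ comes from a self-bounding bootstrap: one rewrites $\mathcal{I} = \sum_{\alpha \in O(\cS)} f(\alpha)\, I^*\sigma_\cS^f(\alpha)$ for the auxiliary measure $\sigma_\cS^f(\beta) := If(\beta)\,I^*\sigma(\beta)$ supported on $\cS$, applies Cauchy--Schwarz to get $\mathcal{I} \leq \|f\|\,\cE[\sigma_\cS^f]^{1/2}$, and then controls $\cE[\sigma_\cS^f]$ by $\mathcal{I}$ itself via a level-set decomposition $\cS_k = \{\beta: If(\beta) \in [2^k,2^{k+1})\}$, using crucially that $V^\sigma(\beta) \leq 2$ for $\beta \in \cS$ (a consequence of \eqref{le1}). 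Your proposed Carleson embedding for the weighted measure $V^\sigma\sigma$ would instead require bounding its tree Carleson constant, for which the hypotheses give no direct control.
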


\begin{proof}

We will give a formula for $\Phi$. This function will be zero on $O(\cS)$ -- see Figure \ref{fig:Pic3}(A) -- and on $W(\cS)$ it is defined as follows: if $\alpha \leq \beta$ for some $\beta \in \cS$, then

\begin{equation}
\label{PhiDef}
\Phi(\alpha) := \left\{\begin{array}{ll}
	\frac1{\lambda}If(\beta)\,I^*_{\cS}\sigma(\alpha), & \text{if } \,\,\sum_{\beta\geq \alpha' \geq \alpha}I^*_{\cS}\sigma(\alpha') \leq \lambda\\
	0, & \text{if }\,\,\sum_{\beta\geq \alpha' \geq \alpha}I^*_{\cS}\sigma(\alpha') > \lambda.  
	\end{array}\right.
\end{equation}

We prove first \eqref{geIf}. Let $\omega \in F$ and let $\beta\in\cS$ such that $\omega\leq \beta$. Since $f=0$ on $W(\cS)$,
	\begin{equation}
	\label{E:If}
	If(\omega) = \sum_{\alpha\geq\omega} f(\alpha) = \sum_{\alpha\geq\beta} f(\alpha) = If(\beta).
	\end{equation}
For $\Phi(\omega)$, we have two cases.

\textbf{Case 1:} $\Phi(\omega) = 0$.  Notice that the case $ I f(\omega)=0$, $\om\in F$, is then done: obviously for $\om\le\beta$, $\om \in F$, $I\Phi(\omega) \ge 0= I f (\omega)$. 

But $If(\om) =I f(\beta)$, see \eqref{E:If}. So without loss of generality we can think below that $If(\beta)>0$.

Let $\om \in F$ and
 let $\gamma$ be the largest $\omega\leq\gamma\leq\beta$ such that 
 $\Phi(\gamma) = 0$; see Figure \ref{fig:Pic3}(B). Remark that $\gamma<\beta$, that is we cannot have 
 $\Phi(\alpha)=0$, $\forall \omega\leq\alpha\leq\beta$.  Let us explain that.
 
Recall that $If(\beta)>0$. Since $\Phi(\beta) =\frac1{\lambda}If(\beta)\,I^*_{\cS}\sigma(\beta)$ then  the  first of reasons why  $\Phi(\beta) = 0$ is $I^*_{\cS}\sigma(\beta) =0$. In other words $\sigma(\beta)=0$ (since $\sigma$ is measure only on the boundary of the tree).
The second reason is (see definition \eqref{PhiDef})
\begin{equation}
\label{secondR}
 I^*_{\cS}\sigma(\beta) > \lambda\,.
 \end{equation}
 Let us bring the first reason to contradiction.

For $\om\le\beta$, $\om \in F$ we know that $V^\sigma(\om)\ge \la$. Notice that  $V^\sigma(\om) = V^\sigma(\beta)$ if  $\sigma(\beta)=0$. Thus, we have $V^\sigma(\beta) \ge \la$, but we also have $V^\sigma(\hat\beta)<1$ by assumption \eqref{le1}. So 

$$
\sigma(\beta)\ge \la-1\ge \frac{\la}{2}.
$$
 But this is impossible: we just wrote that $\sigma(\beta) =0$.  This is a contradiction.
 
 \medskip
 
 Notice that it follows from the assumption that $O(\cS)\neq\emptyset$ that $I_0=root_T\in O(\cS)$, 
 which gives the following mass estimate for $\sigma$:
	\begin{equation}
	\label{massroot}
	\|\sigma\| = II^*\sigma(I_0) = V^{\sigma}(I_0) < 1,
	\end{equation}
by \eqref{le1}. But this means that
	\begin{equation} 
	\label{mass1}
	\sigma(\alpha)=I^*\sigma(\alpha)<1<\frac{\la}{2}, \quad\forall \alpha \in T.
	\end{equation}

\medskip

So, if $\Phi(\beta) = 0$, then by definition of $\Phi$ (see \eqref{PhiDef}) we would have  only the second possibility left: \eqref{secondR}, namely,  this may happen only if $\sigma(\beta)=I^*\sigma(\beta) =I^*_{\cS}\sigma(\beta)>\lambda>1$, a contradiction with \eqref{mass1}. So the second reason for $\Phi(\beta)$ to be zero is disproved as well.

Note also that, once $\Phi(\alpha)\neq 0$, then $\Phi(\alpha')\neq 0$ for all $\alpha\leq\alpha'\leq\beta$:
	$$\sum_{\beta\geq\alpha''\geq\alpha'}I^*\sigma(\alpha'') \leq \sum_{\beta\geq\alpha''\geq\alpha}I^*\sigma(\alpha'') \leq \lambda
	\text{, so } \Phi(\alpha')\neq 0.$$

\begin{figure}[h!]
  \centering
  \begin{subfigure}[b]{0.45\linewidth}
    \includegraphics[width=\linewidth]{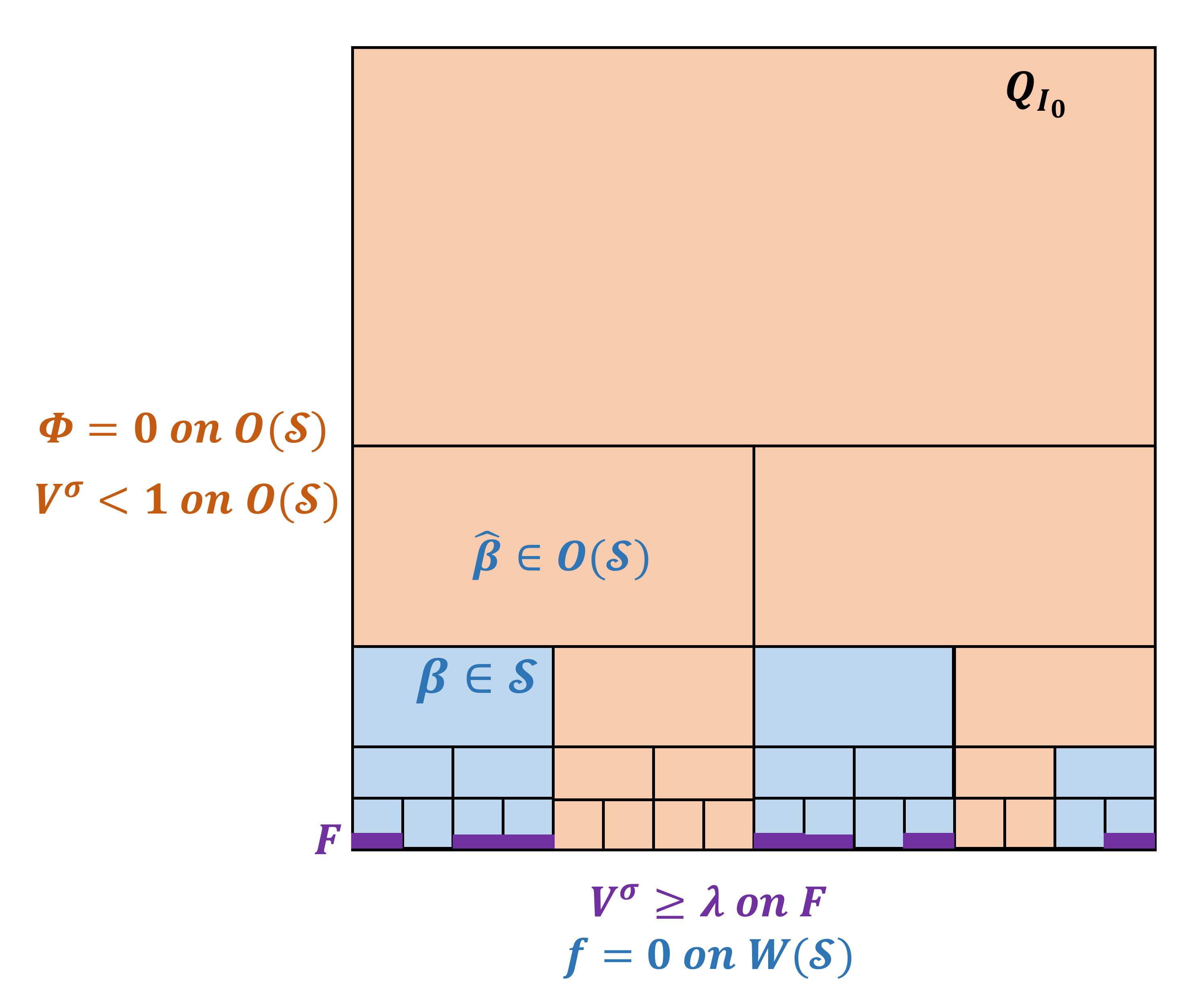}
      \caption{ }
  \end{subfigure}
  \begin{subfigure}[b]{0.45\linewidth}
    \includegraphics[width=\linewidth]{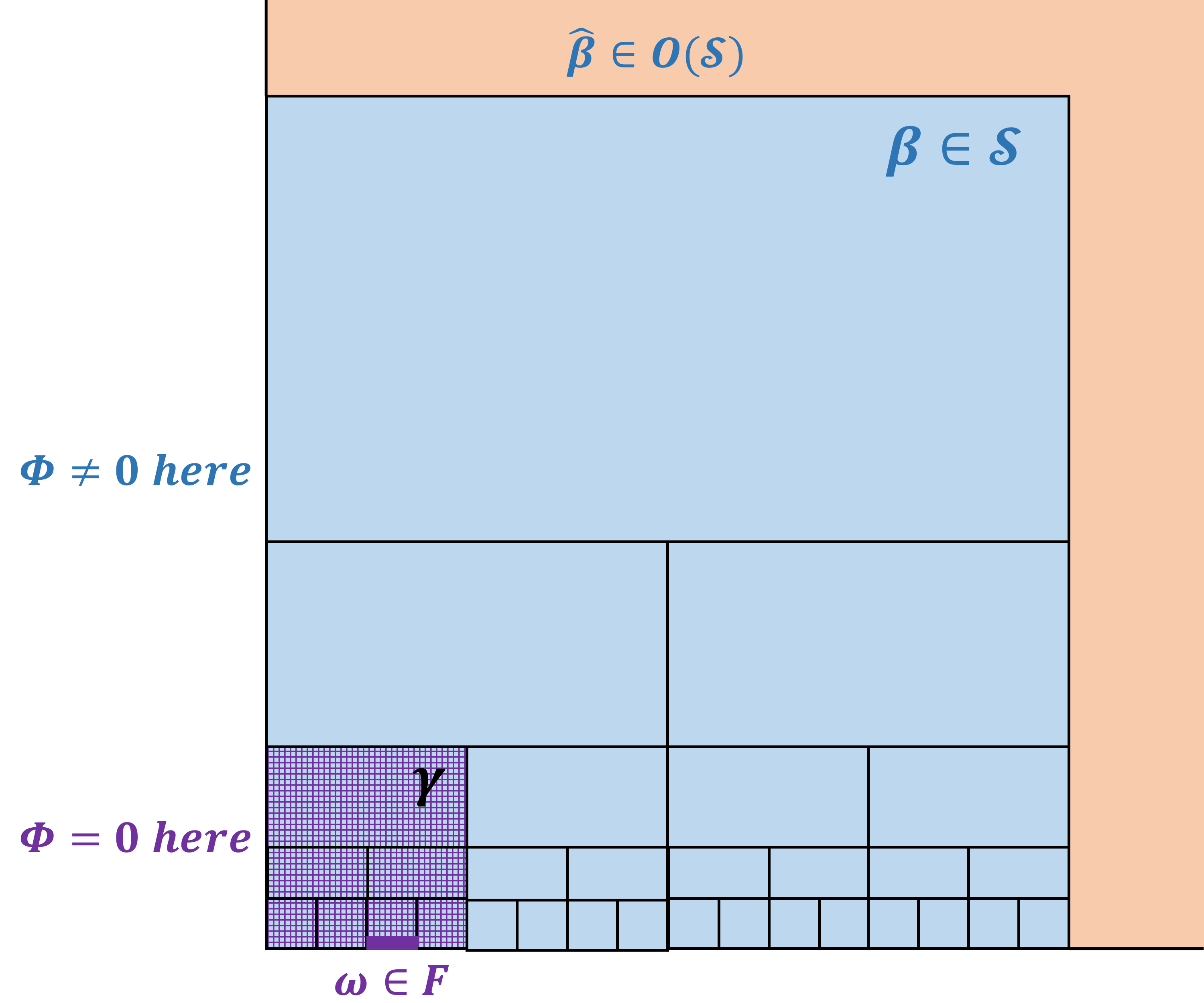}
    \caption{ }
  \end{subfigure}
  \caption{Lemma \ref{Phi}.}
  \label{fig:Pic3}
\end{figure}

So, keeping in mind \eqref{E:If}, we have:
	\begin{eqnarray*}
	I\Phi(\omega) &=& \sum_{\beta\geq\alpha>\gamma} \frac{1}{\lambda} If(\beta) I^*\sigma(\alpha)\\
		&=& \frac{1}{\lambda} If(\omega) \bigg( 
		\underbrace{\sum_{\beta\geq\alpha\geq\gamma} I^*\sigma(\alpha)}_{>\lambda \text{ because } \Phi(\gamma)=0} - 
		\underbrace{I^*\sigma(\gamma)}_{<1 \text{ by } \eqref{mass1}} \bigg)\\
	&>& \frac{\lambda-1}{\lambda} If(\omega).
	\end{eqnarray*}
If we assume, for example, that $\lambda > 3$, then $\frac{\lambda-1}{\lambda}>\frac{2}{3} =: c$.

\textbf{Case 2:} $\Phi(\omega) \neq 0$. Let $\widehat{\beta} \in O(\cS)$ be the dyadic parent of $\beta\in\cS$. Then
	\begin{eqnarray*}
	I\Phi(\omega) &=& \sum_{\beta\geq\alpha\geq\omega} \frac{1}{\lambda} If(\beta) I^*\sigma(\alpha)\\
		&=& \frac{1}{\lambda} If(\omega) \bigg( 
			\underbrace{V^{\sigma}(\omega)}_{\geq\lambda \text{ by \eqref{gela}}} - 
			\underbrace{V^{\sigma}(\widehat{\beta})}_{<1 \text{ by \eqref{le1}}} \bigg)\\
		&>& \frac{\lambda-1}{\lambda} If(\omega).
	\end{eqnarray*}


To prove the energy estimate \eqref{enest1}, let us recall that 
\begin{align*}
&\|\Phi\|_{\ell^2(T)}^2 = \frac1{\la^2} \sum_{\beta \in \cS} [I[f](\beta)]^2\sum_{\al\in Q_\beta: \Phi(\al)\neq 0} I^*_{\cS}[\sigma] (\al)^2=
\\
&  \frac1{\la^2} \sum_{\beta \in \cS} [I[f](\beta)]^2 \int_\beta V^\sigma_{\cS,c}(\om) d \sigma(\om)\,,
\end{align*}
where for $\om \le \beta$, $\beta\in \cS$,
$$
V^\sigma_{\cS, c} (\gamma) := \sum_{\beta\ge \gamma' \ge \gamma: \Phi(\gamma')\neq 0} I^*[\sigma](\gamma')\,.
$$
But $V^\sigma_{\cS, c} (\om)  \le \la$, because this is how $\Phi$ is defined in \eqref{PhiDef}.

Let us introduce a new measure on $T$, called $\sigma_{\cS}^f$, which has masses only on 
vertices $\beta\in \cS$, and each mass is 
$$
\sigma_{\cS}^f(\beta) := I[f](\beta) I^*[\sigma](\beta)\,.
$$
Hence, obviously, we can rewrite the previous estimate of $\|\Phi\|_{\ell^2(T)}^2$ as follows:
\begin{align}
\label{Phi}
&\|\Phi\|_{\ell^2(T)}^2 \le \frac1{\la}\sum_{\beta \in \cS} [I[f](\beta)]^2 I^*[\sigma](\beta)=\notag
\\
& \frac1{\la} \int_{\cS} I[f](\beta) d\sigma_{\cS}^f(\beta) =  \frac1{\la} \sum_{\al \in O(\cS)} f(\al) I^*[\sigma_{\cS}^f](\al) =: \frac1{\la}\mathcal{I}\,,
\end{align}
where $\mathcal{I}:=\int_{\cS} I[f](\beta) d\sigma_{\cS}^f(\beta)$.
To continue, let us make a self estimate of the term $\mathcal{I}$.

\begin{align*}
&\mathcal{I}=\sum_{\al \in O(\cS)} f(\al) I^*[\sigma_{\cS}^f](\al)  \le \|f\|_{\ell^2(T)}\big(\sum_{\al\in O(\cS)} \big(I^*[\sigma_{\cS}^f](\al) \big)^2\Big)^{1/2}=
\\
& \|f\|_{\ell^2(T)} \Big(\int V^{\sigma_S^f} d\sigma_{\cS}^f\Big)^{1/2} = \|f\|_{\ell^2(T)} \Big(\sum_{\beta\in \cS} V^{\sigma_{\cS}^f} (\beta)\sigma_{\cS}^f(\beta)\Big)^{1/2}
\end{align*}

We want to show that $\mathcal{I} \le 8\|f\|_{\ell^2(T)}^2$.
Split $\cS=\cup_{k\in \bZ} \cS_k$, where
$$
\cS_k=\{\beta\in \cS:  2^{k} \le I[f](\beta) < 2^{k+1}\}\,.
$$
$$
\sigma_{\cS_k}^f  =\sigma_{\cS}^f | \cS_k\,.
$$
 We will  estimate now $\sum_{\beta\in \cS} V^{\sigma_{\cS}^f} (\beta)\sigma_{\cS}^f(\beta)\le 64\|f\|_{\ell^2(T)}^2$ as follows:
\begin{align*}
&\sum_{\beta\in \cS} V^{\sigma_{\cS}^f} (\beta)\sigma_{\cS}^f(\beta) \le
\\
&\le 2\sum_k \sum_{\beta\in \cS_k} \sum_{j, k: \,j\le k} V^{\sigma_{\cS_j}^f} (\beta)\sigma_{\cS_k}^f(\beta) \le
\\
&\le 2\sum_k\sum_{\beta\in \cS_k} \sum_{j, k: \,j\le k}  2^{j+1}V^{\sigma}(\beta) \sigma_{\cS_k}^f(\beta) \le  2\sum_k  2^{k+1}V^{\sigma}(\beta) \sigma_{\cS_k}^f(\beta)  \le 
\\
& \le 8\sum_k\sum_{\beta\in \cS_k} I[f](\beta) \sigma_{\cS}^f(\beta) 
 \le 8\sum_{\beta\in \cS}I[f](\beta) \sigma_{\cS}^f(\beta) = 8 \mathcal{I}\le
 \\
& \le 
 8\|f\|_{\ell^2(T)} \big(\sum_{\beta\in \cS} V^{\sigma_{\cS}^f} (\beta)\sigma_{\cS}^f(\beta)\big)^{1/2}\,.
\end{align*}
We used here that by by \eqref{le1} and \eqref{mass1}  $\beta$'s in $\cS$ are all such that
$$
V^{\sigma}(\beta)  \le 2\,.
$$
Therefore,
$$
\mathcal{I} \le 4 \|f\|_{\ell^2(T)}^2\,.
$$
Combining with \eqref{Phi}, we see that the energy estimate \eqref{enest1} is proved, and, thus, the lemma is completely proved.
\end{proof}

\section{Majorization on bi-tree}
\label{T2lemma}

We finish here the proof of  Theorem \ref{2D}.
Let us recall this theorem, it is the following result.

\begin{theorem}\label{2D1}
Let $\mu$ be a positive measure on $\partial T^2$ such that  $\bV \leq 1$ on supp$(\mu)$ and, for some large $\lambda$, $\bV^{\mu}\geq\lambda$ on a set 
$F \subset \partial T^2$. Then there exists a positive function $\varphi$ on $T^2$ such that:
	\begin{itemize}
	\item $\varphi$ satisfies $\bI\varphi(\omega)\geq\lambda$ for all $\omega\in F$.
	\item $\|\varphi\|^2_{\ell^2(T^2)} \leq \frac{C}{\lambda}\cE[\mu]$.
	\end{itemize}
\end{theorem}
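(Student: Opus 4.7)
The plan is to reduce Theorem \ref{2D1} to the one-dimensional Lemma \ref{Phi} by exploiting the product structure $T^2 = T_1 \times T_2$ and the factorization $\bI = I_{(1)}\circ I_{(2)}$ of the Hardy operator into its coordinate actions. For each $a\in T_1$, define the 1D measure $\sigma_a$ on $\partial T_2$ by $\sigma_a(y) := \sum_{x'\le a}\mu(x',y)$, so that $\sigma_a(R_b) = \mu(R_{(a,b)})$. Then the bi-parameter potential splits as
\[
\bV^\mu(x,y) \;=\; \sum_{a \ge x} V^{\sigma_a}(y),
\]
where $V^{\sigma_a}$ is the ordinary 1D potential on $T_2$. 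The hypotheses $\bV^\mu \le 1$ on $\supp(\mu)$ and $\bV^\mu\ge\lambda$ on $F$ become summed bounds $\sum_{a\ge x}V^{\sigma_a}(y) \le 1$ and $\sum_{a \ge x}V^{\sigma_a}(y)\ge\lambda$ on the respective sets, while the bi-parameter energy decomposes as $\cE[\mu] = \sum_a \cE[\sigma_a]$.

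My candidate is $\varphi(a,b) := \Phi_a(b)$, where each $\Phi_a$ is produced by Lemma \ref{Phi} applied to the 1D data $(\sigma_a, f_a, \cS_a)$ with $f_a := I_{(2)}^*\sigma_a$ (so $I_{(2)}f_a = V^{\sigma_a}$) and a stopping family $\cS_a\subset \cD(I_0)$ adapted to where $V^{\sigma_a}$ accumulates. Provided the 1D lemma yields the pointwise gain $I_{(2)}\Phi_a(y) \ge c\, V^{\sigma_a}(y)$ in the relevant range, summing over $a$ gives
\[
\bI\varphi(x,y) \;=\; \sum_{a \ge x} I_{(2)}\Phi_a(y) \;\ge\; c\, \bV^\mu(x,y) \;\ge\; c\,\lambda \qquad\text{on } F.
\]
For the energy bound, the 1D gain $\|\Phi_a\|_{\ell^2(T_2)}^2 \le (C/\lambda_a^2)\|f_a\|_{\ell^2(T_2)}^2$ combined with $\sum_a \|f_a\|_{\ell^2(T_2)}^2 = \cE[\mu]$ must be arranged to telescope to the desired $(C/\lambda)\cE[\mu]$.

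The main obstacle is that Lemma \ref{Phi} demands a pointwise lower bound $V^{\sigma_a} \ge \lambda_a$ with $\lambda_a \gg 1$ on a target 1D set, whereas the hypotheses only give the summed lower bound $\sum_a V^{\sigma_a} \ge \lambda$; in particular nothing prevents every individual $V^{\sigma_a}$ from being $O(1)$ while $\lambda$ is accrued from many ancestors. I expect to handle this through a \emph{layered stopping-time} decomposition: dyadically split according to the size of $V^{\sigma_a}$ by setting $F_{a,k} := \{y : 2^k \le V^{\sigma_a}(y) < 2^{k+1}\}$, apply Lemma \ref{Phi} in each layer at level $\lambda_a = 2^k$, and then combine the layer contributions via a Cauchy--Schwarz / $\ell^2(k)$-pairing that balances the $1/\lambda_a^2$ gain against the constraint $\sum_a\sum_k 2^k \mathbf{1}_{F_{a,k}} \ge \lambda$ on $F$. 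The subtlety is the simultaneous verification of the 1D upper bound ``$V^{\sigma_a} < 1$ on $O(\cS_a)$'' required by Lemma \ref{Phi}: this must be inherited from the bi-parameter bound $\bV^\mu \le 1$ on $\supp(\mu)$, which only controls the $T_1$-sum and not each $\sigma_a$ individually. The delicate choice is therefore of the stopping families $\cS_a$, chosen coherently across $a\in T_1$ so that the isolated contribution of each $\sigma_a$ to its own 1D problem is $<1$, while the accumulated contributions across $a\in T_1$ still reach $\lambda$ on $F$; this is where I expect the detailed analysis in the coming section to concentrate.
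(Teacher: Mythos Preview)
Your decomposition $\bV^\mu(x,y)=\sum_{a\ge x}V^{\sigma_a}(y)$ is the wrong slicing, and the obstacle you identify is fatal rather than fixable by layering. Lemma~\ref{Phi} has no content unless the 1D potential is $\gg 1$; when every individual $V^{\sigma_a}$ is $O(1)$ (which nothing forbids), none of your layers $F_{a,k}$ with $2^k\gg 1$ are populated, so the lemma is never invoked and you produce $\Phi_a\equiv 0$. Your claimed pointwise bound $I_{(2)}\Phi_a\ge c\,V^{\sigma_a}$ is also not what the lemma gives: it gives $I\Phi\ge c\,If$ \emph{only on the set where $V^\sigma\ge\lambda$}, so you cannot sum it over all $a$ to recover $c\,\bV^\mu$. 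The deeper problem is that you have conflated the two inputs of Lemma~\ref{Phi} by setting $f_a=I^*\sigma_a$: the lemma allows $\sigma$ and $f$ to be different objects, with $\sigma$ supplying the potential hypotheses and $f$ supplying the $\ell^2$-budget, and it is precisely this decoupling that makes the 2D reduction work.

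The paper's slicing avoids the obstacle entirely. Fixing $\alpha_y\in T_y$, it takes $\sigma=\sigma^{\alpha_y}$ on $\partial T_x$ so that $V^{\sigma}(\gamma_x)=\bV^\mu(\gamma_x,\alpha_y)$ is the \emph{full} 2D potential at height $\alpha_y$; after a vertical stopping time $\alpha(\omega)$ (the largest $\alpha\ni\omega_y$ with $\bV^\mu(\omega_x\times\alpha)\ge\lambda/3$), this gives $V^\sigma\ge\lambda/3$ on $F_{\alpha_y}$ directly, with no summation needed. The function to be majorized is the single slice $f^{\alpha_y}(\gamma_x)=\mu(\gamma_x\times\alpha_y)$, so that $\sum_{\alpha_y}\|f^{\alpha_y}\|_{\ell^2(T_x)}^2=\cE[\mu]$ exactly. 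The stopping family $\cS(\alpha_y)$ consists of the maximal $\beta_x$ with $\bV^\mu(\beta_x\times\alpha_y)>1$; since $\bV^\mu\le 1$ on $\supp\mu$, the rectangle $\beta_x\times\alpha_y$ misses $\supp\mu$, which forces $f^{\alpha_y}=0$ on $W(\cS(\alpha_y))$ and gives the upper bound $V^\sigma<1$ on $O(\cS(\alpha_y))$. Summing $I\Phi^{\alpha_y}(\omega_x)\ge c\,If^{\alpha_y}(\omega_x)$ over $\omega_y\le\alpha_y\le\alpha(\omega)$ telescopes to $\bV^\mu(\omega)-\bV^\mu(\omega_x\times\widehat{\alpha(\omega)})\ge 2\lambda/3$. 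You should redo the construction with $\sigma$ and $f$ decoupled in this way.
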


\begin{proof}
All of our dyadic rectangles are inside the unit square $Q_0= I_0\times I_0$.

Let us consider the family of dyadic rectangles $\gamma_x\times \alpha_y$ with a fixed vertical side $\alpha_y$, and define
	$$G(\gamma_x) := G^{\alpha_y}(\gamma_x) := \sum_{\alpha'\geq\alpha_y}\mu(\gamma_x\times\alpha').$$
Then note that 
	$$I_xG^{\alpha_y}(\gamma_x) = \bV^{\mu}(\gamma_x\times\alpha_y).$$
Moreover,
	\begin{equation}\label{f1}
	G^{\alpha_y}(\gamma_x) \leq 1,\:\: \forall \gamma_x, \alpha_y.
	\end{equation}
Indeed, let $\tau_y$ be the biggest (if it exists) dyadic $I_0 \geq \tau_y\geq \alpha_y$ such that 
$(\gamma_x \times \tau_y) \cap \text{supp}(\mu) = \emptyset$ (see Figure \ref{fig:Pic4}). Then
	$$
	G^{\alpha_y}(\gamma_x) = \sum_{\substack{\alpha'\geq\alpha_y \\ \alpha' \le\tau_y}} \mu(\gamma_x\times\alpha')
	+ \sum_{\alpha' > \tau_y}\mu(\gamma_x\times\alpha').
	$$
The first term above is obviously $0$, and the second term is $\leq 1$ because it is less than $\bV^{\mu}$ for some point in supp$(\mu)$.
In case $\tau_y$ does not exist, obviously $G^{\alpha_y}(\gamma_x)=0$.

\begin{figure}[h!]
  \centering
    \includegraphics[width=4in]{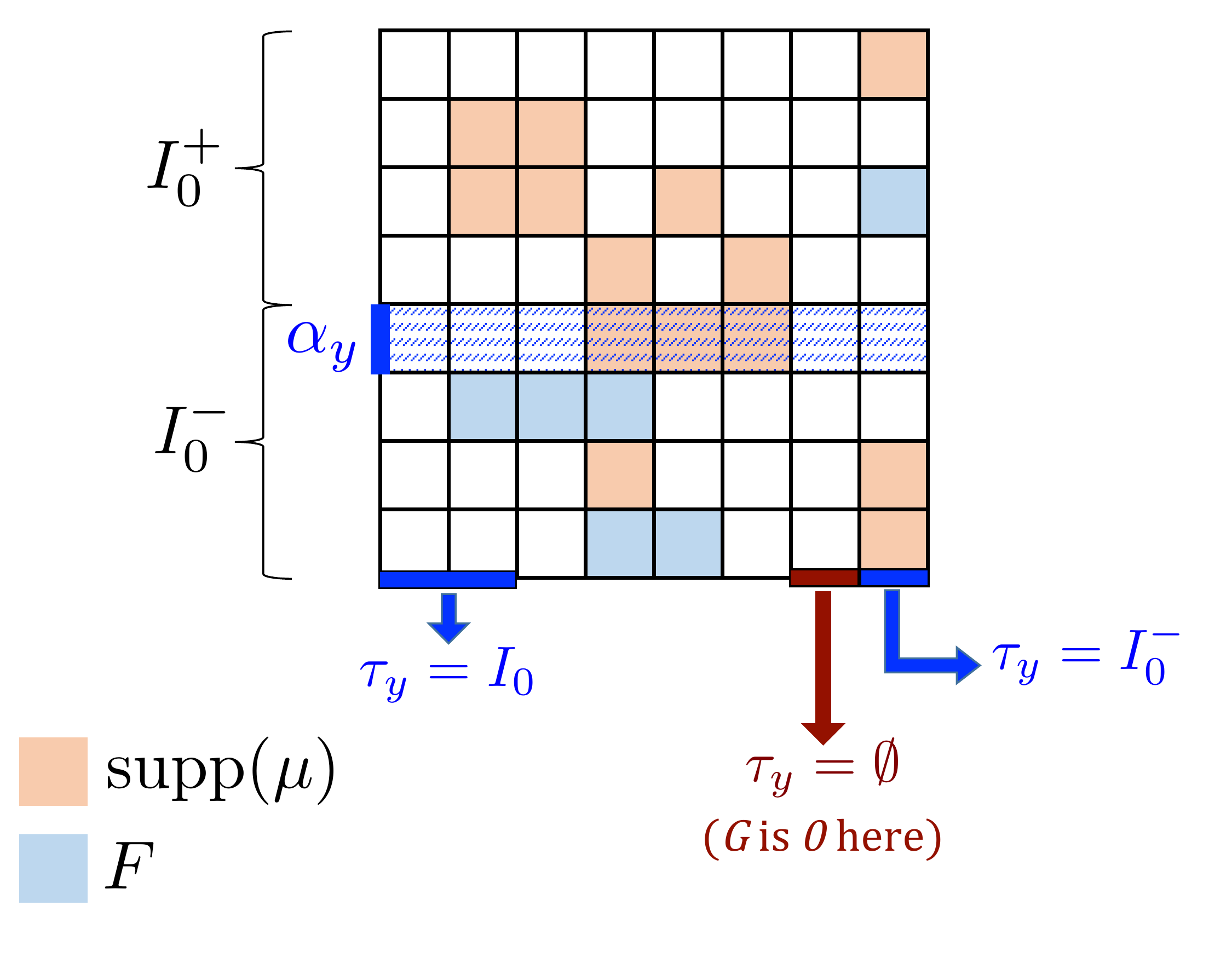}
      \caption{Examples of various $\tau_y$ for a fixed $\alpha_y$.}
  \label{fig:Pic4}
\end{figure}

Now, \eqref{f1} implies that we may consider the family $\cS := \cS(\alpha_y)$ of maximal stopping intervals $\beta_x\in T_x$ such that
$I_xG^{\alpha_y}(\beta_x) = \bV(\beta_x\times\alpha_y)> 1$. Then
	$$I_xG^{\alpha_y}(\beta_x) = \bV(\beta_x\times\alpha_y) \leq 2, \:\:\forall \beta_x\in\cS(\alpha_y).$$
To see this, let $\beta_x\in\cS(\alpha_y)$ and $\widehat{\beta}_x$ be its dyadic parent. Then
$I_xG^{\alpha_y}(\widehat{\beta}_x)\leq 1$, so
	$$I_xG^{\alpha_y}(\beta_x) = \sum_{\gamma_x\geq\beta_x} G^{\alpha_y}(\gamma_x) = 
		\underbrace{G^{\alpha_y}(\beta_x)}_{\leq 1 \text{ by \eqref{f1}}} + \underbrace{I_x G^{\alpha_y}(\widehat{\beta}_x)}_{\leq 1} \leq 2.$$

Another immediate property of the collection $\cS(\alpha_y)$ is
	$$\beta_x\in\cS(\alpha_y) \Rightarrow (\beta_x\times\alpha_y) \cap \text{supp}(\mu) = \emptyset.$$
Otherwise, suppose $Q\in\partial T^2$ is in this intersection. Then 
	$$I_xG^{\alpha_y}(\beta_x) = \bV^{\mu}(\beta_x\times\alpha_y) \leq \bV^{\mu}( Q)\leq 1,$$
a contradiction. It is then obvious that
	\begin{equation}\label{why0}
	\beta_x\in\cS(\alpha_y) \Rightarrow \mu(\beta_x'\times\alpha_y)=0, \:\forall \beta_x'\leq \beta_x.
	\end{equation}

We claim next that
	\begin{equation}\label{f4f5}
	\text{If for some } \omega_x: \bV^{\mu}(\omega_x\times\alpha_y)\geq\frac{\lambda}{3} \text{, then }
		\cS(\alpha_y) \neq\emptyset \text{ and } O(\cS(\alpha_y))\neq\emptyset.
	\end{equation}
Recall that $\lambda$ is large, so obviously $\bV^{\mu}(\omega_x\times\alpha_y) >1$, and then $\cS(\alpha_y)$ is non-empty.
Also, $I_xG^{\alpha_y}(\text{root}_{T_x}) = G^{\alpha_y}(\text{root}_{Tx})\leq 1$, therefore 
any interval in $\cS(\alpha_y)$ is strictly smaller than $I_0$. We therefore have a non-empty family $\cS(\alpha_y)$ of 
largest dyadic intervals in $T_x$ such that $I_xG^{\alpha_y}(\beta_x)>1$, and all these intervals are strictly smaller than $I_0$.

For any small square $\omega=\omega_x\times\omega_y \in F$, let
$\alpha(\omega)$ denote the first from the top (largest) dyadic interval containing $\omega_y$ such that 
	$$\bV^{\mu}(\omega_x\times\alpha(\omega)) \geq \frac{\lambda}{3}.$$
Then by definition
	\begin{equation}\label{v1}
	\bV^{\mu}(\omega_x \times \alpha) \geq \frac{\lambda}{3}, \:\:\forall \alpha: \: \omega_y\leq\alpha\leq\alpha(\omega).
	\end{equation}
In particular, for any $\omega\in F$ and for any $\alpha_y$ such that $\omega_y\leq\alpha_y\leq\alpha(\omega)$, we obtained a family $\cS(\alpha_y)$ of disjoint dyadic subintervals of $T_x$ such that 
	\begin{equation}\label{v12}
	\forall \alpha_y: \omega_y \leq \alpha_y \leq \alpha(\omega) \Rightarrow \cS(\alpha_y)\neq\emptyset, \: O(\cS(\alpha_y))\neq\emptyset.
	\end{equation}	
	
Given $\alpha_y$, we constructed a function $G^{\alpha_y}$ on $T_x \times\alpha_y$, and a family $\cS(\alpha_y)\subset T_x$ of disjoint subintervals. Now we need another function on $T_x\times\alpha_y$, namely
	$$f(\gamma_x) := f^{\alpha_y}(\gamma_x) := \mu(\gamma_x\times\alpha_y).$$
Recall that $W(\cS) = \cup_{\beta\in\cS}Q_{\beta}$.

Fix $\alpha_y$ and construct a special function $\Phi^{\alpha_y}$ as follows.

\begin{itemize}
\item If the dyadic strip $I_0\times\alpha_y$ does not contain any $\omega\in F$, then put $\Phi^{\alpha_y} = 0$.
\item Otherwise (see Figure \ref{fig:Pic5}, \ref{fig:Pic6}), let
	$$
	F_{\alpha_y}:= \{\omega_x: \omega=\omega_x\times\omega_y \in F \text{ s.t. } \omega \text{ lies in } I_0\times\alpha_y \text{ and }
	\alpha_y \leq \alpha(\omega)\}.
	$$
If $F_{\alpha_y}=\emptyset$, again put $\Phi^{\alpha_y} = 0$. Otherwise, for some $\omega_x\in F_{\alpha_y}$, by \eqref{v12}:
	$$\alpha_y \leq \alpha(\omega) \Rightarrow \cS(\alpha_y)\neq\emptyset \text{ and } O(\cS(\alpha_y))\neq\emptyset.$$
We claim that we are now in the situation of Lemma \ref{Phi}.
\end{itemize}

\begin{figure}[h!]
  \centering
    \includegraphics[width=3in]{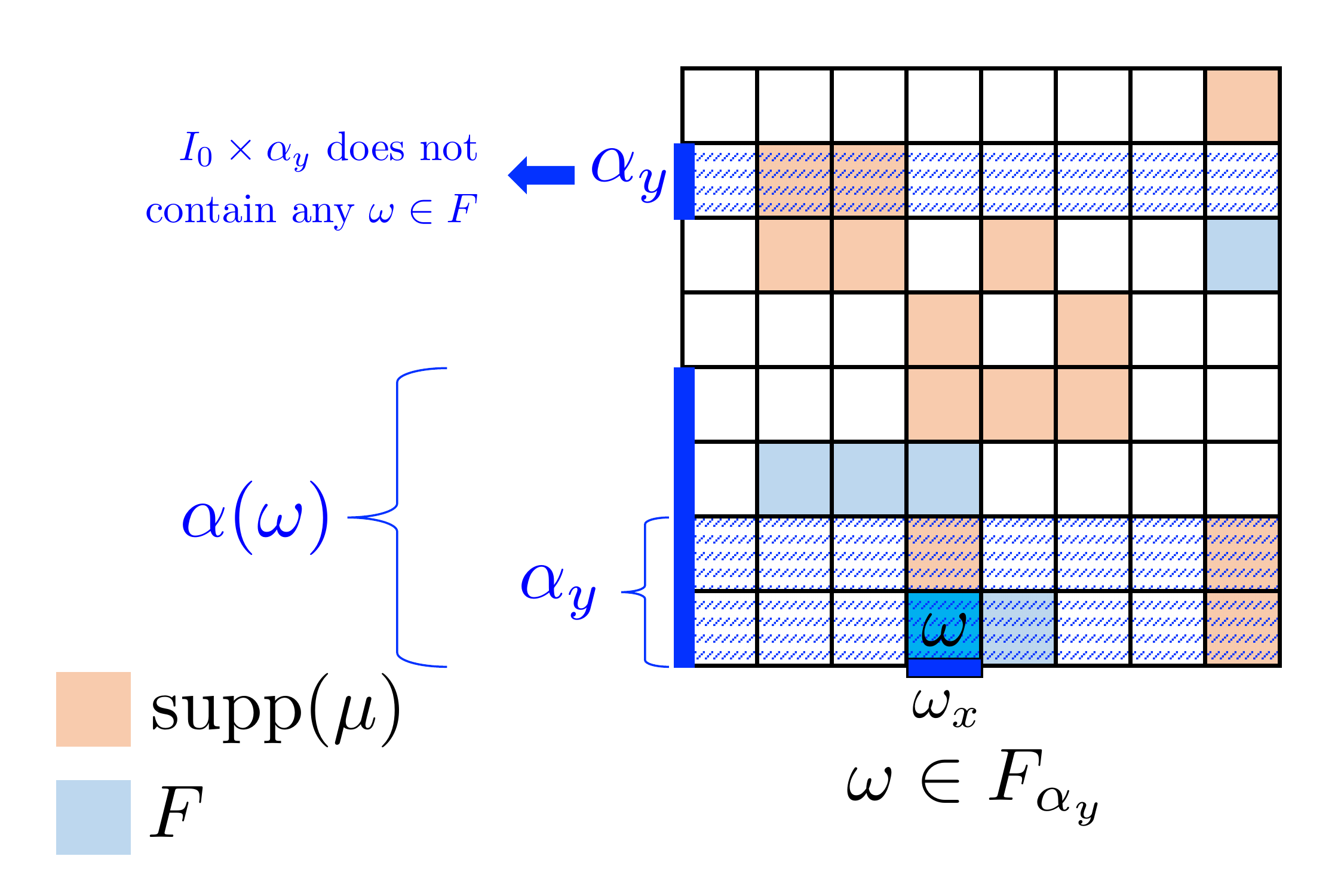}
      \caption{Construction of the function $\Phi^{\alpha_y}$ (1).}
  \label{fig:Pic5}
\end{figure}

\begin{figure}[h!]
  \centering
    \includegraphics[width=3in]{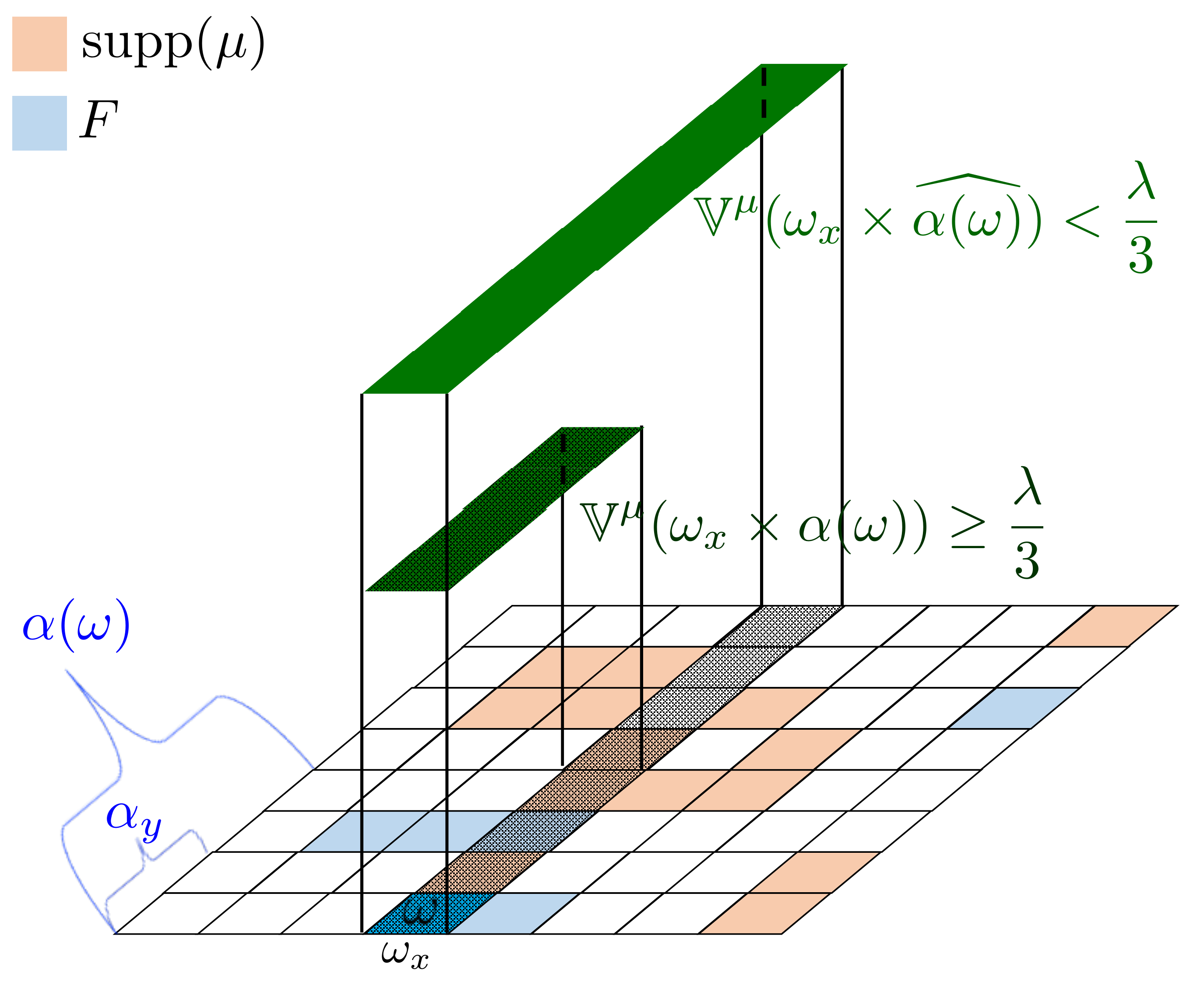}
      \caption{Construction of the function $\Phi^{\alpha_y}$ (2).}
  \label{fig:Pic6}
\end{figure}

Let $\sigma$ be a measure on $\partial T_x$ defined by:
	$$\sigma(\omega_x) := \sum_{\alpha'\geq\alpha_y} \mu(\omega_x\times\alpha'), \:\forall \omega_x\in\partial T_x.$$
Then
	$$G^{\alpha_y}(\gamma_x) = I_x^*\sigma(\gamma_x).$$

By \eqref{v1}:
	$$\frac{\lambda}{3} \leq \bV^{\mu}(\omega_x\times \alpha_y) = IG^{\alpha_y}(\omega_x) = II^*\sigma(\omega_x) = V^{\sigma}(\omega_x),$$
so $F_{\alpha_y} \subset \partial T_x \cap W(\cS(\alpha_y))$ -- otherwise, we would have $\bV^{\mu}(\omega_x\times\alpha_y)\leq 1$, a contradiction. We make note of the fact that
	\begin{equation}\label{B}
	V^{\sigma}(\omega_x) \geq \frac{\lambda}{3}, \:\forall \omega_x\in F_{\alpha_y}.
	\end{equation}
Also, by definition of $\cS(\alpha_y)$,
	$$V^{\sigma}(\gamma_x) = \bV^{\mu}(\gamma_x\times\alpha_y) < 1, \:\forall \gamma_x\in O(\cS(\alpha_y)).$$
	
By \eqref{why0},
	\begin{equation}\label{A}
	f^{\alpha_y} = 0 \text{ on } W(\cS(\alpha_y)).
	\end{equation}
	
So, we are now indeed under the assumptions of Lemma \ref{Phi}, so we have a non-negative function $\Phi^{\alpha_y}$ on $T_x$ such that, with positive absolute constants $c$, $C$:
	
	\begin{equation}\label{vf1}
	I\Phi^{\alpha_y}(\omega_x) \geq c If^{\alpha_y}(\omega_x), \:\forall \omega_x \in F_{\alpha_y}.
	\end{equation}
	
	\begin{equation}\label{vf2}
	\|\Phi^{\alpha_y}\|^2_{\ell^2(T_x)} \leq \frac{C}{\lambda} \|f^{\alpha_y}\|^2_{\ell^2(T_x)}.
	\end{equation}

Now put
	$$\varphi(\gamma_x, \alpha_y) := \Phi^{\alpha_y}(\gamma_x).$$
Summing \eqref{vf2} over all $\alpha_y \in T_y$:
	\begin{eqnarray*}
	\|\varphi\|^2_{\ell^2(T^2)} &=& \sum_{\gamma_x, \alpha_y} \bigg(\Phi^{\alpha_y}(\gamma_x)\bigg)^2 = \sum_{\alpha_y} \|\Phi^{\alpha_y}\|^2_{\ell^2(T_x)}\\
		&\leq& \frac{C}{\lambda} \sum_{\gamma_x, \alpha_y} \mu(\gamma_x\times\alpha_y)^2 = \mathcal{E}[\mu].
	\end{eqnarray*}
Given $\omega=\omega_x\times\omega_y \in F$, sum \eqref{vf1} in $\alpha_y$: $\omega_y\leq\alpha_y\leq\alpha(\omega)$:
	\begin{eqnarray*}
	\bI\varphi(\omega) &=& \sum_{\alpha_y\geq\omega_y} I\Phi^{\alpha_y}(\omega_x) 	
		\geq \sum_{\alpha_y: \omega_y\leq\alpha_y\leq\alpha(\omega)} I\Phi^{\alpha_y} (\omega_x)
		\\
		&\geq& c \sum_{\alpha_y: \omega_y\leq\alpha_y\leq\alpha(\omega)} If^{\alpha_y} (\omega_x) = 
			c \sum_{\alpha_y: \omega_y\leq\alpha_y\leq\alpha(\omega)} \sum_{\omega'\geq\omega_x} \mu(\omega'\times\alpha_y)
			\\
		&=& c\bigg( \sum_{\substack{\omega'\geq\omega_x
		 \\
		  \alpha'\geq\omega_y}} \mu(\omega'\times\alpha') -
			\sum_{\substack{\omega'\geq \omega_x 
			\\
			 \alpha'>\alpha(\omega)}} \mu(\omega'\times\alpha') \bigg)\\
		&=& c\bigg( \underbrace{\bV^{\mu}(\omega_x\times\omega_y)}_{\geq\lambda \text{ because } \bV^{\mu}\geq\lambda} - 
		\underbrace{\bV^{\mu}(\omega_x\times\widehat{\alpha(\omega)})}_{<\lambda/3 \text{by defn. of } \alpha(\omega)} \bigg)\\
		&\geq& c\frac{2\lambda}{3}.
	\end{eqnarray*}
\end{proof}

\section{The proof of Lemma \ref{l:l2}}
\label{pr32}

The proof of Lemma \ref{l:l2} is also based on Theorem \ref{2D1}, but rather on a  modification of it.
Hence we need a a special modification of Theorem \ref{2D1}.  Let
$$
E_1:= \{ (\tau\times \alpha): \bV^{\mu} (\tau\times \alpha)<1\}\,.
$$
This set can be empty because we do not assume anything on $\mu\ge 0$ at this moment.
Put 
$$
{\bV}_1^{\mu} (\tau\times \alpha):= \sum_{\tau'\ge \tau, \alpha'\ge \alpha, (\tau', \alpha')\in E_1} \mu(\tau'\times\alpha')\,.
$$
For any positive function on $T^2$ we denote
$$
{\bI}_1 \vf := \sum_{\tau'\ge \tau, \alpha'\ge \alpha, (\tau', \alpha')\in E_1}  \vf(\tau'\times \alpha')\,.
$$
Denote $\cE_1[\mu]:=\int \bV_1^{\mu} \, d\mu$.
Then, 
$$
\cE_1[\mu]=\int \bV_1^{\mu} \, d\mu = \sum_{\tau\times \alpha \in E_1} \big(\mu (\tau\times \alpha)\big)^2\,.
$$

\begin{theorem}
\label{2D2}
Let $\mu$ is a positive measure on $\pd T^2$ such that  $\bV_1^\mu\ge \la>> 1$ on a set $F\subset \pd T^2$. 
Then there exists positive  $\vf$ on $T^2$ such that
\begin{itemize}
\item $\vf$ satisfies $\bI\vf (\om) \ge \lambda$ for all $\om \in F$,
\item $\|\vf\|_{\ell^2(T^2)}^2 \le \frac{C}{\lambda} \cE_1[\mu]$.
\end{itemize}
\end{theorem}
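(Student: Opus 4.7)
The plan is to run the slicing proof of Theorem \ref{2D1} with every ingredient intersected with $E_1$. The substantive change is that the natural slice-by-slice mass function, although non-negative and bounded, can no longer be represented as $I_x^*$ of a measure on $\partial T_x$; I would instead exhibit it as $I_x^*$ of a non-negative measure spread across the whole tree $T_x$, and observe that the proof of Lemma \ref{Phi} nowhere uses that $\sigma$ lives on the boundary --- the only identity invoked is $\sum_\alpha (I^*\sigma(\alpha))^2=\int V^\sigma\,d\sigma$, valid for any non-negative measure on $T$, together with the disjointness argument $\sum_{\gamma\leq\beta',\,\gamma\in\cS_j}\sigma(R_\gamma)\leq\sigma(R_{\beta'})$ that also survives on the whole tree.

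For each $\alpha_y\in T_y$ I would set
\begin{equation*}
G_1^{\alpha_y}(\gamma_x):=\sum_{\alpha'\geq\alpha_y,\,(\gamma_x\times\alpha')\in E_1}\mu(\gamma_x\times\alpha'),\qquad f^{\alpha_y}(\gamma_x):=\mu(\gamma_x\times\alpha_y)\,\mathbf{1}_{(\gamma_x\times\alpha_y)\in E_1},
\end{equation*}
so that $I_x G_1^{\alpha_y}(\gamma_x)=\bV_1^\mu(\gamma_x\times\alpha_y)$. The replacement of the crucial bound $G^{\alpha_y}\leq 1$ of the original proof is immediate: if $\alpha^*$ denotes the smallest $\alpha'\geq\alpha_y$ with $(\gamma_x\times\alpha')\in E_1$ (and $G_1^{\alpha_y}(\gamma_x)=0$ if no such $\alpha^*$ exists), then
\begin{equation*}
G_1^{\alpha_y}(\gamma_x)\leq\sum_{\alpha'\geq\alpha^*}\mu(\gamma_x\times\alpha')\leq\bV^\mu(\gamma_x\times\alpha^*)<1
\end{equation*}
straight from the definition of $E_1$. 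Monotonicity of $\bV^\mu$ further implies that the indicator $\mathbf{1}_{(\cdot\times\alpha')\in E_1}$ is monotone along $T_x$, so $G_1^{\alpha_y}$ is super-additive, and therefore $\sigma_1(\gamma_x):=G_1^{\alpha_y}(\gamma_x)-\sum_{\omega\text{ child of }\gamma_x}G_1^{\alpha_y}(\omega)$ is a non-negative measure on $T_x$ with $I_x^*\sigma_1=G_1^{\alpha_y}$ and $V^{\sigma_1}=\bV_1^\mu(\cdot\times\alpha_y)$. The stopping family $\cS(\alpha_y)\subset T_x$ of maximal dyadic intervals with $V^{\sigma_1}>1$ then obeys the standard bounds $V^{\sigma_1}\leq 2$ on $\cS(\alpha_y)$ and $V^{\sigma_1}<1$ on $O(\cS(\alpha_y))$.

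The main technical step, and the single place where the definition of $E_1$ does real work, is verifying that $f^{\alpha_y}\equiv 0$ on $W(\cS(\alpha_y))$: in the original proof this was trivial from $\bV^\mu\leq 1$ on $\supp\mu$, but here I would argue by the monotonicity of $\bV^\mu$ --- for $\gamma_x\leq\beta_x\in\cS(\alpha_y)$,
\begin{equation*}
\bV^\mu(\gamma_x\times\alpha_y)\geq\bV^\mu(\beta_x\times\alpha_y)\geq\bV_1^\mu(\beta_x\times\alpha_y)>1,
\end{equation*}
so $(\gamma_x\times\alpha_y)\notin E_1$ and $f^{\alpha_y}(\gamma_x)=0$. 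With every hypothesis of Lemma \ref{Phi} (now understood to allow measures on the whole of $T_x$) satisfied slice by slice, it produces $\Phi^{\alpha_y}$ on $T_x$ with the usual majorization and energy estimates, and I would set $\varphi(\gamma_x,\alpha_y):=\Phi^{\alpha_y}(\gamma_x)$. Summing the energy bounds across $\alpha_y$ yields $\|\varphi\|_{\ell^2(T^2)}^2\lesssim \cE_1[\mu]/\lambda$, while telescoping $I_x\Phi^{\alpha_y}\geq c\,I_xf^{\alpha_y}$ over the range $\omega_y\leq\alpha_y\leq\alpha(\omega)$ --- with $\alpha(\omega)$ the largest ancestor of $\omega_y$ for which $\bV_1^\mu(\omega_x\times\alpha_y)\geq\lambda/3$ --- gives $\bI\varphi(\omega)\geq c(\bV_1^\mu(\omega)-\bV_1^\mu(\omega_x\times\widehat{\alpha(\omega)}))\geq 2c\lambda/3$ on $F$, and a harmless rescaling of $\varphi$ absorbs the constants.
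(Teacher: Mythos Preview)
Your proof is correct, and it follows the same slicing architecture as the paper's argument for Theorem~\ref{2D2}: define $g_1=G_1^{\alpha_y}$ and $f_1=f^{\alpha_y}$, check $g_1\le 1$, build the stopping family $\cS(\alpha_y)$, verify $f_1=0$ on $W(\cS)$ via $\bV^\mu\ge\bV_1^\mu>1$ there, and then glue the slicewise $\Phi^{\alpha_y}$'s. The point of departure is how you feed $g_1$ into Lemma~\ref{Phi}. The paper observes that $g_1$ is \emph{not} of the form $I^*\sigma$ for a boundary measure, proves two-point super-additivity of $g_1$ (Lemma~\ref{superadd}), upgrades this to three-point super-harmonicity of $Ig_1$ (Lemma~\ref{superharm}), introduces the equilibrium measure $\rho$ of $F_{\alpha_y}$, and invokes a tailor-made minimum principle (Lemma~\ref{2ptMinP}) to compare $V^{(\lambda/3)\rho}$ with $Ig_1$; only then does it apply Lemma~\ref{Phi} to the boundary measure $\sigma=(\lambda/3)\rho$.

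Your route is more direct: from the same super-additivity you write $g_1=I_x^*\sigma_1$ for a non-negative measure $\sigma_1$ on the \emph{whole} tree $T_x$, so that $V^{\sigma_1}=\bV_1^\mu(\cdot\times\alpha_y)$ exactly, and then you observe that the proof of Lemma~\ref{Phi} never actually uses that $\sigma$ is concentrated on $\partial T$ --- the identities $\sum_\alpha(I^*\sigma(\alpha))^2=\int V^\sigma\,d\sigma$, the mass bound $I^*\sigma\le V^\sigma(\text{root})<1$, and the disjointness estimate $\sum_{\gamma\in\cS_j,\,\gamma\le\alpha}I^*\sigma(\gamma)\le I^*\sigma(\alpha)$ all hold verbatim for measures on $T$. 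This bypasses the equilibrium measure and the minimum principle entirely. The trade-off is that the paper keeps Lemma~\ref{Phi} as a black box with its stated hypotheses, at the cost of three auxiliary lemmas; your argument reopens Lemma~\ref{Phi} once to note its proof is insensitive to where $\sigma$ lives, and thereby shortens the overall argument.
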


\begin{proof} 
If $E_1=\emptyset$, there is nothing to prove as the set $F$ of large values of $\bV_1^\mu$ will be empty (since $\bV_1^\mu=0$ identically).

Now we follow closely the proof of Theorem \ref{2D1}. Again fix $\alpha_y \in T_y$. As before we introduce two function (notice the modification):
$$
g_1(\tau_x) := \sum_{\alpha_y' \ge \alpha_y, (\tau_x\times \alpha_y')\in E_1} \mu(\tau_x\times \alpha_y')\,,\
$$
$$
f_1(\tau_x) := \mu (\tau_x\times \alpha_y), \quad \tau_x\times \alpha_y\in E_1; \quad 0\,\,\text{otherwise}\,.
$$
Of course we should keep in mind that these functions have implicit superscript $\alpha_y$.
Notice that
$$
I g_1(\gamma_x) = \sum_{\gamma_x'\ge \gamma_x, \alpha_y'\ge \alpha_y, (\gamma_x'\times \alpha_y')\in E_1} \mu(\gamma_x'\times \alpha_y') = \bV_1^\mu (\gamma_x\times \alpha_y)\,.
$$
So,  consider the family $\cS=\cS^{\alpha_y}$ of maximal dyadic intervals (=nodes of $T_x$) such that
\begin{equation}
\label{ge1}
Ig_1(\beta_x) \ge 1\,.
\end{equation}
As before consider $W(\cS)$ and $O(\cS)$.
Given $E_1\neq \emptyset$, we conclude that for some $\alpha_y$ the set $O(\cS)$ is non-empty and that
\begin{equation}
\label{small2}
I g_1 < 1\quad \text{on}\,\, O(\cS)\,.
\end{equation}
Consider
$$
	F_{\alpha_y}:= \{\omega_x: \omega=\omega_x\times\omega_y \in F \text{ s.t. } \omega \text{ lies in } I_0\times\alpha_y \text{ and }
	\alpha_y \leq \alpha(\omega)\}.
	$$
	Now $\al(\om)$ is computed with respect to potential $\bV_1^\mu$: the largest such $\al$ that $\bV_1(\om_x\times \al) \ge \frac{\la}{3}$.

Non-emptiness of $E_1$ also implies $\mu (I_0\times I_0) < 1$ and thus \eqref{small2} can be complemented by
\begin{equation}
\label{beta2}
I g_1 \le 2\quad \text{for all }\,\, \beta \in \cS\,.
\end{equation}
However, if $F\cap\big(T_x\times \alpha_y\big)\neq \emptyset$, then  on $F_{\alpha_y}\subset \pd T_x$
\begin{equation}
\label{gela3}
Ig_1 \ge \frac{\la}{3}\,.
\end{equation}

Next,  following the scheme of the proof of Theorem \ref{2D1}, let us check  that
\begin{equation}
\label{fWS0}
f_1 =0 \quad\text{on} \,\, W(\cS)\,.
\end{equation}
Indeed, let $\gamma \in W(\cS)$, so there exists $\beta\in \cS$ such that $\gamma \le \beta$. Then, using \eqref{ge1}, we get
$$
\bV_1^\mu(\gamma\times \alpha_y) = I g_1(\gamma) \ge Ig_1(\beta) \ge 1,
$$
and, hence, by the definition of $f_1$, $f_1(\gamma)=0$.

\bigskip

We are almost in the assumptions of Lemma \ref{Phi}. In fact, we have $W(\cS)$, $O(\cS)$, function $f_1$ that plays the part of $f$ and function $g_1$ that plays the part of $G$, and we have assumption \eqref{small2} that is like \eqref{le1} and assumption \eqref{gela3} that is like assumption \eqref{gela}.
There is a difference though, because the property $G=I^*[\sigma]$ is missing, $g_1$ is more complicated. But we will be able to circumvent this difficulty in a rather easy way.

\medskip

It is clear that we are interested only in those $\alpha_y$, for which $f_1\neq 0$, therefore, we are interested only in those $\alpha_y$, for which $O(\cS)\neq \emptyset$.

Remembering this, next consider \eqref{gela3}. If \eqref{gela3} happens (there are many $\alpha_y$'s for which this will happen, namely, those for which $F\cap \big( T_x\times \alpha_y)\ne \emptyset$), then, obviously, \eqref{gela3} may happen only on the part of $\pd T_x$ that lie inside some of the intervals $\beta\in \cS$.

To reduce everything to Lemma \ref{Phi} we will need one property of $g_1$ that will replace the property $G=I^*[\sigma]$ that  is missing. 
 Namely, we have 
 \begin{lemma}
 \label{superadd}
 Let $\tau_x=\tau_x^1\cup \tau_x^2$, $\tau_x^i$ being two children of $\tau_x$. Then
 $$
 g_1(\tau_x) \ge g_1(\tau_x^1) + g_1(\tau_x^2)\,.
 $$
 \end{lemma}
 \begin{proof}
 Let $\alpha_y^i\ge \alpha_y$ be the smallest interval such that $\tau_x^i\times \alpha_y^i$ belongs to $E_1$.  
 And let $\tau_x\times \hat \alpha_y$  be the smallest interval such that $\tau_x\times \hat \alpha_y$ belongs to $E_1$. 
 Without the loss of generality we assume that $\alpha_y^1\le \alpha_y^2$. 
 Then (see Figure \ref{fig:Pic7}) $\tau_x\times \alpha_y^1$ contains $\tau_x^1\times \alpha_y^1\in E_1$, and we conclude that
 $\tau_x \times \alpha_y^1$ also belongs to $E_1$. But $\tau_x\times  \hat\alpha_y$ is the smallest such  rectangle. Therefore,
 $$
 \tau_x\times  \hat\alpha_y \subset \tau_x\times \alpha_y^1, \quad \text{and so}\quad \hat\alpha_y \le \alpha_y^1\le \alpha_y^2\,.
 $$
 
 \begin{figure}[h!]
  \centering
    \includegraphics[width=2in]{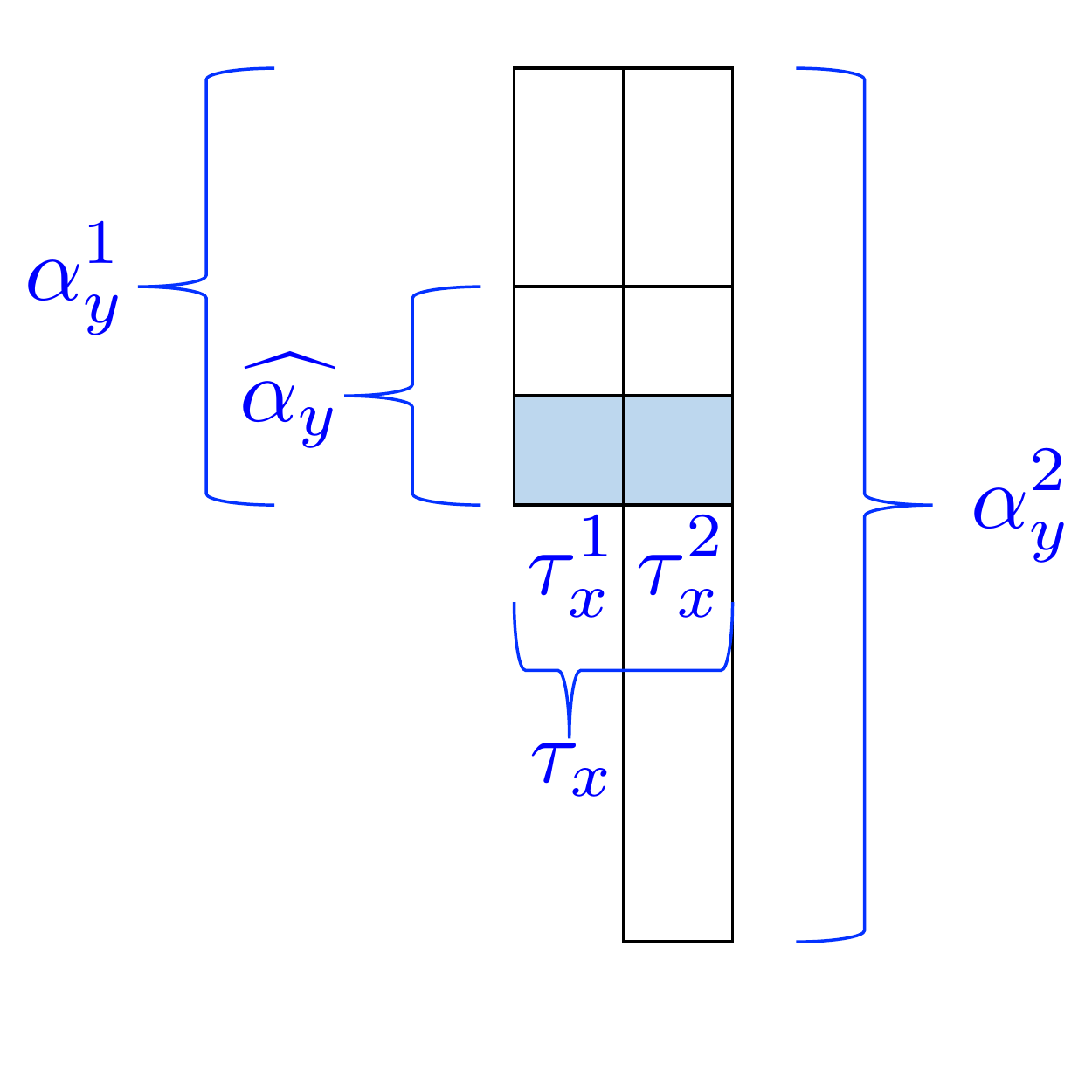}
      \caption{Lemma \ref{superadd} construction.}
  \label{fig:Pic7}
\end{figure}

In the definition of $g_1(\tau_x)$ we have the sum of $\mu$'s over $\tau_x \times \alpha, \alpha= Gen_k \hat\alpha_y$, $k\ge 0$, where $Gen_k(I)$ means the predecessor  of $I$, which is $2^k$ times larger than $I$. We write
\begin{align*}
&g_1(\tau_x) =\sum_k \mu(\tau_x \times Gen_k\hat\alpha_y) = \sum_{k=0}^\infty \mu(\tau_x^1 \times Gen_k\hat \alpha_y)  + \sum_{k = 0}^\infty\mu(\tau_x^2 \times Gen_k\hat \alpha_y) \ge
\\
&\sum_{k=0}^\infty \mu(\tau_x^1 \times Gen_k \alpha_y^1)  + \sum_{k=0}^\infty \mu(\tau_x^2 \times Gen_k \alpha_y^1) = g_1(\tau_x^1) + g_1(\tau_x^2)\,,
\end{align*}
where the inequality holds because there are less predecessors for larger intervals.

 \end{proof}
 
 \begin{defin}
 Function $g$ satisfying $ g(\tau) \ge g(\tau^1) + g(\tau^2)$ for any $\tau\in T$ and its two children $\tau^1, \tau^2$ is called two point super-harmonic. Function $G$ satisfying $ G(\tau) = G(\tau^1) + G(\tau^2)$ for any $\tau\in T$ and its two children $\tau^1, \tau^2$ is called two point harmonic. 
 \end{defin}
 
 This property of $g_1$ implies immediately the following property of $Ig_1$:
 \begin{lemma}
 \label{superharm}
 Function $Ig_1$ on $T$ is three point super-harmonic.  In other words,
 let $\tau\in T$  has two children $\tau^1, \tau^2$ and father $\tau_3$. Then 
 $$
 Ig_1(\tau) \ge \frac13\big(Ig_1(\tau_1) + Ig_1(\tau_2) +I g_1(\tau_3)\big)\,.
 $$
 \end{lemma}
 \begin{proof}  
 Let $c= g_1(\tau), a= g_1(\tau_1), b= g_1(\tau_2)$. The above mentioned inequality is obviously equivalent to saying that
 $$
 \frac13(a+c)  +\frac13(b+c) \le c\,.
 $$
 This is of course true by Lemma  \ref{superadd}.

 \end{proof}
 
  \begin{remark}
 \label{harm}
 Notice that this claim simultaneously proves that if $\sigma$ is a positive measure on $\pd T$ and if $G(\tau):= I^*\sigma(\tau), \tau\in T$, then
 $IG= V^\sigma$ is three point harmonic. Indeed, 
 if we use the same proof with $IG=V^\sigma$ replacing $Ig_1$, 
 we would come to $c=a+b$, which is $I^*\sigma(\tau)= I^*\sigma(\tau_1) + I^*\sigma(\tau_2)$ which is of course correct.
 \end{remark}

Now  let us use \eqref{gela3} as follows. Let $\rho$ be an equilibrium measure on $F_{\al_y}=Proj_{T_x}\big[F\cap \big(T_x\times\alpha_y\big)\big]$. 
In particular $V^\rho=1$ on $F_{\al_y}$. Denote
$$
\sigma:=\frac{\lambda}{3} \rho\,.
$$
Then by \eqref{gela3} we have:
\begin{equation}
\label{eqla3}
V^\sigma=\frac{\lambda}{3}, \quad \text{on} \quad  F_{\al_y}=Proj_{T_x}\big[F\cap \big(T_x\times\alpha_y\big)\big]\,.
\end{equation}

\medskip

 \begin{remark}
 \label{falseMinP}
 One can now think that maximum principle on tree $T$ would now imply that super-harmonic $Ig_1$
 is bigger than harmonic $IG, G= I^*\sigma$, on the whole tree $T$ because on the boundary they satisfy \eqref{eqla3}. However,  this is not the right reasoning because of two important obstacles: 1) \eqref{eqla3} holds not on the whole boundary of $T$ but only on some part of it; 2) for $3$ point subharmonic functions minimum principle claims that minimum is either on the boundary or at the root of the tree. And we have seemingly no information about the behavior of super-harmonic $Ig_1$ and harmonic $IG= I (I^*\sigma)$ at the root.
 One needs another minimum principle. It is in Lemma \ref{2ptMinP} below.
  \end{remark}

\medskip

Denote $G:= I^*\sigma$. It is a two point harmonic function, and the set of the boundary $\pd T$, where 
it is strictly positive is by definition inside $\supp\sigma=\supp\rho$. So on the set, where $G$ is strictly
positive we have $IG=V^\sigma \le Ig_1$ by \eqref{gela3} and \eqref{eqla3}.

Hence, we are in a position to use Lemma \ref{2ptMinP} and Remark \ref{harm} that imply
$$
 V^\sigma \le Ig_1\quad \text{on}\quad T\,.
 $$
 This and \eqref{small2} gives
 \begin{equation}
 \label{Vsigma}
 V^\sigma< 1\quad \text{on}\quad O(\cS)\,.
 \end{equation}
 Now \eqref{eqla3} and \eqref{Vsigma} correspond to \eqref{le1} and \eqref{gela} of Lemma \ref{Phi}. We use this lemma and get $\Phi$ claimed in it. Then the end of the proof of Theorem \ref{2D2} repeats verbatim the reasoning of Section \ref{T2lemma}.

\end{proof}

\begin{lemma}
\label{2ptMinP}
Let $g, G$ be two non-negative functions on $T$. Let $g$ be two point super-harmonic, and $G$ be two point harmonic functions. Assume that $IG\le Ig$ on the set $P=\{\om \in \pd T: G(\om)>0\}$.
Then $IG \le Ig$ on the whole tree $T$.
\end{lemma}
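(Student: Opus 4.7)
My plan is to introduce $H := Ig - IG$ on $T$ and prove $H \ge 0$ by a minimum principle. The first step is to record three mean-value inequalities for $H$. At any interior non-root node $\tau$ with children $\tau_1,\tau_2$ and parent $\tau_3$, applying the identities $I\varphi(\tau_i) = \varphi(\tau_i) + I\varphi(\tau)$ and $I\varphi(\tau_3) = I\varphi(\tau) - \varphi(\tau)$ to both $\varphi = g$ and $\varphi = G$, and invoking the two point super-harmonicity of $g$ together with the two point harmonicity of $G$, yields
\[
H(\tau) \ge \tfrac{1}{3}\bigl(H(\tau_1) + H(\tau_2) + H(\tau_3)\bigr);
\]
this is just Lemma \ref{superharm} for $g$ combined with its equality analog for $G$ (Remark \ref{harm}). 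At the root $I_0$, where the parent term is absent, the same computation produces the weaker estimate
\[
H(I_0^1) + H(I_0^2) \le 3\,H(I_0).
\]
Finally, at any leaf $\omega \notin P$ we have $G(\omega) = 0$, so $IG(\omega) = IG(\widehat{\omega})$ and thus
\[
H(\omega) = g(\omega) + H(\widehat{\omega}) \ge H(\widehat{\omega}).
\]

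Next I would argue by contradiction. Suppose $m := \min_T H < 0$ and set $M := \{H = m\}$, which is non-empty since $T$ is finite. The hypothesis $IG \le Ig$ on $P$ immediately gives $M \cap P = \emptyset$. The root inequality precludes $I_0 \in M$: combined with $H(I_0^i) \ge m$ it forces $2m \le H(I_0^1)+H(I_0^2) \le 3m$, i.e., $m \ge 0$, a contradiction. The propagation step is then: at any interior non-root $\tau \in M$, the three-point super-harmonicity together with $H \ge m$ at each neighbor forces equality, so all three neighbors of $\tau$ belong to $M$; and at a leaf $\omega \in M \setminus P$, the third inequality above forces $\widehat{\omega} \in M$. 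Starting from any $\alpha \in M$ and iterating these two rules in the finite connected tree $T$, the propagation can only terminate at the root or at a leaf in $P$, both of which are excluded from $M$. This produces the required contradiction, so $m \ge 0$ and $IG \le Ig$ on all of $T$.

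The main obstacle I anticipate is the handling of the root, where three-point super-harmonicity breaks down because $I_0$ has no parent. The substitute inequality $H(I_0^1) + H(I_0^2) \le 3H(I_0)$ is genuinely weaker than the three-point condition, but it is just strong enough to yield $m \ge 0$ whenever the minimum lands at the root via the arithmetic $2m \le 3m$. Once this is settled, the remainder is a standard tree-minimum-principle propagation, with leaves outside $P$ handled by the one-step ``pass to parent'' observation.
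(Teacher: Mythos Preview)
Your proof is correct and takes a different route from the paper's. The paper argues directly: from a putative violation $Ig(\beta) < IG(\beta)$ it first climbs to a ``good'' ancestor $\gamma$ where in addition $g(\gamma) < G(\gamma)$ (the root is good if nothing earlier is), and then descends from $\gamma$ along a branch on which $g < G$ at every step, using two-point super-harmonicity of $g$ versus two-point harmonicity of $G$ to select the child. Summing the strict inequalities along this branch produces a leaf $\omega$ with $G(\omega) > 0$ and $Ig(\omega) < IG(\omega)$, contradicting the hypothesis on $P$.

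Your argument packages the same information as a clean minimum principle for $H = Ig - IG$: three-point super-harmonicity at interior vertices (via Lemma~\ref{superharm} and Remark~\ref{harm}), the root inequality, and the leaf-outside-$P$ step $H(\omega) \ge H(\widehat{\omega})$ then force the minimum set $M$ to propagate upward to the root, which is excluded by the $2m \le 3m$ observation. The paper's path-building is slightly more hands-on and avoids speaking of a global minimum set; your version is closer to a standard elliptic minimum principle and makes the role of the root boundary condition more transparent. Both arguments are short and essentially equivalent in strength.
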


\begin{proof}
Assume that at a certain $\beta\in T$ we have 
$Ig(\beta)< IG(\beta)$. If simultaneously $g(\beta) < G(\beta)$ we call this $\beta$ good. If it is not good, thus,  $g(\beta) \ge G(\beta)$, then clearly $Ig(\beta_1)< IG(\beta_1)$, where $\beta_1$ denotes the father of $\beta$. Again we query whether $\beta_1$ is good. If not we come to $\beta_2$, which is the father of $\beta_1$. Eventually we will find a good vertex. May be it will be the root of the tree, where $Ig=g, IG=G$.

As soon as we find good $\gamma\in T$, that is $\gamma$ such that simultaneously
\begin{equation}
\label{gG}
Ig(\gamma) < IG(\gamma)
\end{equation}
 and $g(\gamma) < G(\gamma)$, we notice that one of the children
$\gamma_\pm$ (let us call it $\gamma_1$) will also satisfy $g(\gamma_1)< G(\gamma_1)$.
In fact,
$$
g(\gamma_+)+ g(\gamma_-) \le g(\gamma) < G(\gamma)=G(\gamma_+)+ G(\gamma_-) \,.
$$
Now, by recursion, we find a child $\gamma_2$ of $\gamma_1$ such that $g(\gamma_2)< G(\gamma_2)$. We continue doing that till we come to the boundary, namely, to a certain $\gamma_n=:\om\in \pd T$, such that $g(\gamma_n)< G(\gamma_n)$. Vertices $\gamma_1, \dots, \gamma_n$ form the branch of the tree from $\gamma_1$ till $\gamma_n=\om\in \pd T$. We can now add all inequalities $g(\gamma_i)< G(\gamma_i)$, $i=1, \dots, n$, and also add to this inequality \eqref{gG}. 

As a result we get two things: one is that $G(\om) > g(\om)\ge 0$ (that is $\om$ lies in the set $P$),
the second one is
$$
Ig(\om) < IG(\om)\quad \om \in P\,.
$$
But this is a contradiction to the assumption that $Ig\ge IG$ on $P$.

\end{proof}

Define
$$
E_\delta:= \{ (\tau\times \alpha): V^{\mu} (\tau\times \alpha)<\delta\}\,.
$$
Put 
$$
{\bV}_\delta^{\mu} (\tau\times \alpha):= \sum_{\tau'\ge \tau, \alpha'\ge \alpha, (\tau', \alpha')\in E_\delta} {\bI}\mu(\tau'\times\alpha')\,.
$$
For any positive function on $T^2$ we denote
$$
{\bI}_\delta \vf := \sum_{\tau'\ge \tau, \alpha'\ge \alpha, (\tau', \alpha')\in E_\delta}  \vf(\tau'\times \alpha')\,.
$$
Denote $\cE_\delta[\mu]:=\int \bV_\delta^{\mu} \, d\mu$.
Then, 
$$
\cE_\delta[\mu]=\int \bV_\delta^{\mu} \, d\mu = \sum_{\tau\times \alpha \in E_\delta} \big(\mu (\tau\times \alpha)\big)^2\,.
$$

Let $\delta\in (0,1]$. By rescaling $\mu:=\mu/\delta$ we get
\begin{theorem}
\label{2D2r}
Let $\mu$ is a positive measure on $\pd T^2$ such that  $\bV_\delta^\mu\ge  \lambda\ge 1$ on a set $F\subset \pd T^2$. 
Then there exists positive  $\vf$ on $T^2$ such that
\begin{itemize}
\item $\vf$ satisfies $\bI\vf (\om) \ge \lambda$ for all $\om \in F$,
\item $\|\vf\|_{\ell^2(T^2)}^2 \le C\frac{\delta}{\lambda} \cE_\delta[\mu]$.
\end{itemize}
\end{theorem}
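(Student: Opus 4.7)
The plan is a direct rescaling argument that reduces Theorem \ref{2D2r} to Theorem \ref{2D2}. I define the rescaled measure $\tilde\mu := \mu/\delta$. Since $\bV^{\tilde\mu} = \delta^{-1}\bV^\mu$, the condition $\bV^{\tilde\mu}(\tau\times\alpha)<1$ is equivalent to $\bV^\mu(\tau\times\alpha)<\delta$, so the set $E_1$ associated with $\tilde\mu$ coincides with the set $E_\delta$ associated with $\mu$. Consequently $\bV_1^{\tilde\mu} = \delta^{-1}\bV_\delta^\mu$, and the hypothesis $\bV_\delta^\mu \ge \lambda$ on $F$ becomes $\bV_1^{\tilde\mu}\ge \lambda/\delta$ on $F$. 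Since $\delta\le 1$ and $\lambda\ge 1$, we have $\lambda/\delta\ge\lambda\ge 1$, so $\tilde\mu$ falls under the hypothesis of Theorem \ref{2D2} with parameter $\lambda/\delta$.

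Applying Theorem \ref{2D2} produces a positive function $\tilde\vf$ on $T^2$ satisfying $\bI\tilde\vf(\om) \ge \lambda/\delta$ for every $\om\in F$ together with
\[
\|\tilde\vf\|_{\ell^2(T^2)}^2 \le \frac{C\delta}{\lambda}\,\cE_1[\tilde\mu].
\]
The identity
\[
\cE_1[\tilde\mu] = \sum_{(\tau\times\alpha)\in E_1(\tilde\mu)}\tilde\mu(\tau\times\alpha)^2 = \frac{1}{\delta^2}\cE_\delta[\mu],
\]
which follows from $E_1(\tilde\mu)=E_\delta(\mu)$ and $\tilde\mu=\mu/\delta$, then upgrades the energy estimate to
\[
\|\tilde\vf\|_{\ell^2(T^2)}^2 \le \frac{C}{\lambda\delta}\,\cE_\delta[\mu].
\]

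Finally, I set $\vf:=\delta\tilde\vf$. Linearity of $\bI$ gives $\bI\vf(\om) = \delta\,\bI\tilde\vf(\om) \ge \delta\cdot(\lambda/\delta) = \lambda$ for all $\om\in F$, while
\[
\|\vf\|_{\ell^2(T^2)}^2 = \delta^2\|\tilde\vf\|_{\ell^2(T^2)}^2 \le \delta^2\cdot\frac{C}{\lambda\delta}\cE_\delta[\mu] = \frac{C\delta}{\lambda}\cE_\delta[\mu],
\]
which is exactly the claimed bound. There is no substantive obstacle here: the real content sits entirely in Theorem \ref{2D2}, and the only thing to verify is that the factors of $\delta$ propagate consistently, which they do because $\bV^\mu$, $\bV_\delta^\mu$, $\bI$ and $\cE_\delta[\mu]$ are respectively homogeneous of degrees $1$, $1$, $1$ and $2$ in $\mu$.
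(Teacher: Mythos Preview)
Your argument is correct and is exactly the approach the paper takes: the paper's entire proof is the one-line remark ``By rescaling $\mu:=\mu/\delta$ we get'' Theorem \ref{2D2r} from Theorem \ref{2D2}, and you have simply written out the bookkeeping of that rescaling in full. The only minor point is that Theorem \ref{2D2} is stated for $\lambda\gg 1$ rather than $\lambda\ge 1$, but this is a cosmetic discrepancy already present in the paper and is harmless up to adjusting the absolute constant $C$.
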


\begin{lemma}
\label{cEcE}
Assume that $\mu$ is a positive measure on $\pd T^2$  such that $\bV^\mu\ge 1$ on $\supp\mu$. Then
\begin{equation}
\label{lemma32}
\cE_\delta[\mu] \le C\delta^{1/2} \cE[\mu]\,.
\end{equation}
In particular,
\begin{align*}
&\cE_{T^2\cap \{\bV^\mu \ge \delta\}}[\mu]=\sum_{R \subset \pd T^2\cap \{\bV^\mu \ge \delta\}}\mu(R)^2  \ge \sum_{\alpha\in T^2: \bV^\mu(\alpha)\ge \delta} \big[\bI^*\mu(\alpha)\big]^2= 
\\
& \cE[\mu] -  \cE_\delta [\mu] \ge (1-C\delta^{1/2}) \cE[\mu]\,.
 \end{align*}
\end{lemma}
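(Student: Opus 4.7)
The plan is to combine Theorem \ref{2D2r} with a layer-cake decomposition and an optimization in a cut-off level. First I invoke Theorem \ref{2D2r} at each level $\lambda$ in its applicable range: setting $F_\lambda := \{\om\in\pd T^2 : \bV_\delta^\mu(\om)\ge\lambda\}$, I obtain a non-negative $\vf_\lambda$ on $T^2$ with $\bI\vf_\lambda \ge \lambda$ on $F_\lambda$ and $\|\vf_\lambda\|_{\ell^2(T^2)}^2 \le (C\delta/\lambda)\,\cE_\delta[\mu]$. Testing $\lambda\mu(F_\lambda) \le \int\bI\vf_\lambda\,d\mu = (\vf_\lambda,\bI^*\mu)_{\ell^2(T^2)}$ and applying Cauchy--Schwarz with $\|\bI^*\mu\|_{\ell^2(T^2)}^2 = \cE[\mu]$ produces the Chebyshev-type bound
$$\mu(F_\lambda) \le \lambda^{-3/2}\sqrt{C\delta\,\cE_\delta[\mu]\,\cE[\mu]}.$$

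Second, I use layer cake to write $\cE_\delta[\mu] = \int_0^\infty \mu(\{\bV_\delta^\mu > t\})\,dt$ and split at a threshold $\lambda_0$ to be chosen. For $t\in[0,\lambda_0]$ I use the trivial bound $\mu(\{\bV_\delta^\mu>t\}) \le \mu(\pd T^2) \le \cE[\mu]$, where the last inequality is the crucial consequence of the hypothesis $\bV^\mu\ge 1$ on $\supp\mu$, namely $\cE[\mu] = \int\bV^\mu\,d\mu \ge \int d\mu = \mu(\pd T^2)$. For $t \ge \lambda_0$ the previous step integrates to $2\lambda_0^{-1/2}\sqrt{C\delta\,\cE_\delta[\mu]\,\cE[\mu]}$. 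Adding yields
$$\cE_\delta[\mu] \le \lambda_0\,\cE[\mu] + 2\lambda_0^{-1/2}\sqrt{C\delta\,\cE_\delta[\mu]\,\cE[\mu]}.$$
Balancing the two terms on the right gives the optimal choice $\lambda_0^* \asymp (\delta\,\cE_\delta[\mu]/\cE[\mu])^{1/3}$; substituting back produces $\cE_\delta[\mu] \lesssim \delta^{1/3}\,\cE_\delta[\mu]^{1/3}\,\cE[\mu]^{2/3}$, and then dividing by $\cE_\delta[\mu]^{1/3}$ and raising to the $3/2$-power yields exactly $\cE_\delta[\mu] \le C\sqrt{\delta}\,\cE[\mu]$. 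The ``In particular'' statement follows at once from the monotonicity of $\bV^\mu$ along descent: $\bV^\mu(\alpha)\ge\delta$ forces $\bV^\mu(\om)\ge\bV^\mu(\alpha)\ge\delta$ for every $\om\le\alpha$, so $R_\alpha\subset\pd T^2\cap\{\bV^\mu\ge\delta\}$; hence $\cE_{T^2\cap\{\bV^\mu\ge\delta\}}[\mu] \ge \sum_{\alpha:\bV^\mu(\alpha)\ge\delta}[\bI^*\mu(\alpha)]^2 = \cE[\mu] - \cE_\delta[\mu]$, and the lower bound $(1-C\sqrt{\delta})\cE[\mu]$ is immediate.

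The main technical subtlety I anticipate is ensuring that the optimal cut-off $\lambda_0^* \asymp \sqrt{\delta}$ lies in the applicable range of Theorem \ref{2D2r}: as stated it requires $\lambda\ge 1$, whereas $\sqrt{\delta}<1$ for small $\delta$. However, the very rescaling $\mu\mapsto\mu/\delta$ that produces Theorem \ref{2D2r} from Theorem \ref{2D2} actually delivers the statement for all $\lambda \ge c\delta$ (with $c$ the absolute constant implicit in $\lambda\gg 1$ of Theorem \ref{2D2}), which comfortably accommodates $\lambda_0^*\asymp\sqrt{\delta}$. In the degenerate regime where $\lambda_0^*$ falls even below this range, the implicit lower bound on $\lambda_0^*$ forces $\cE_\delta[\mu]\lesssim\delta^2\cE[\mu]$ directly, and this is still $\le C\sqrt{\delta}\cE[\mu]$ for $\delta\in(0,1]$ since $\delta^{3/2}\le 1$.
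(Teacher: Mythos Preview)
Your proof is correct and follows essentially the same route as the paper's own argument: apply Theorem~\ref{2D2r} at each level to get the Chebyshev-type bound $\mu(\{\bV_\delta^\mu\ge\lambda\})\lesssim \lambda^{-3/2}\sqrt{\delta\,\cE_\delta[\mu]\,\cE[\mu]}$, use the hypothesis $\bV^\mu\ge 1$ on $\supp\mu$ to convert $|\mu|\le\cE[\mu]$, split the integral $\cE_\delta[\mu]=\int\bV_\delta^\mu\,d\mu$ at a threshold, and balance. The paper uses a dyadic sum and the fixed cutoff $\delta^{1/2}$ followed by a two-case argument, whereas you use a continuous layer cake and optimize the cutoff; these are cosmetic variants of the same computation. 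Your explicit discussion of why Theorem~\ref{2D2r} is actually available for $\lambda$ down to order $\delta$ (rather than only $\lambda\ge 1$ as literally stated) is a point the paper leaves implicit, and your handling of it is correct.
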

\begin{proof}
If the first display inequality is proved, then the second  display inequality follows because  given $\alpha\in T^2$ such that $\bV^\mu(\alpha) \ge \delta$, we immediately see that for each point $x\in \supp\mu$ of the dyadic rectangle $R$ corresponding to $\alpha$ we have $\bV^\mu(x) \ge \delta$.

\medskip

To prove the first inequality we will use Theorem \ref{2D2r}. Fix a small positive $\eps$ to be chosen soon.  
Consider $E_k\subset \pd T^2$ such that $E_k=\{x\in \supp \mu: 2^{k-1}< \bV^{\mu}(x) \le 2^k\}$, $k=-\eps\log\frac1{\delta}, \dots ,0, 1,\dots $. Then construct $\Phi_k$ from Theorem \ref{2D2r} with data $\la=2^k$, $\delta$. Then
\begin{align*}
& 2^k \mu(E_k) \le \int_{E_k} \bI( \Phi_k) d\mu \le   \int  \bI(\Phi_k) d\mu   = \sum_{T^2} \Phi_k \bI^*[\mu] \le
\\
& \|\Phi_k\|_{\ell^2} \cE[\mu]^{1/2} \le \Big(\frac{\delta}{2^k}\Big)^{1/2} \cE_\delta [\mu]^{1/2}\cE[\mu]^{1/2}\,.
\end{align*}
Now sum over $k$ and use that $\|\mu\|\le \int \bV^\mu \, d\mu=\cE[\mu]$ as $\bV^\mu\ge 1$ on $\supp\mu$:
\begin{align*}
&\cE_\delta[\mu] =\int \bV^\mu_\delta\, d\mu = \int_{\bV^\mu_\delta\le \delta^\eps} \bV^\mu_\delta\, d\mu + \int_{\bV^\mu_\delta >\delta^\eps} \bV^\mu_\delta\, d\mu  \le \delta^\eps \|\mu\|+ 2 \sum_{k=0}^\infty 2^k \mu(E_k) \le  
\\
&\delta^\eps \cE[\mu]+ 2 \sum_{k=0}^\infty 2^k \mu(E_k)\le  \delta^\eps \cE[\mu]+ C\cE_\delta [\mu]^{1/2}\cE[\mu]^{1/2}  \Big(\frac{\delta}{\delta^\eps}\Big)^{1/2}\,.
\end{align*}
One of the terms on the right is bigger than another. Thus, either
$
\cE_\delta[\mu] \le C\delta^\eps \cE[\mu]
$
or 
$\cE_\delta[\mu]  \le C\delta^{1-\eps} \cE[\mu]$.  Either way, choosing $\eps=\frac12$ we get the result of the lemma.

\end{proof}

The second display inequality of Lemma \ref{cEcE} proves Lemma \ref{l:l2}.

\end{document}